\newtheorem{theorem}{Theorem}[section]
\newtheorem{lemma}[theorem]{Lemma}
\newtheorem{proposition}[theorem]{Proposition}
\newtheorem{questions}[theorem]{Questions}
\newtheorem{question}[theorem]{Question}
\newtheorem{definition}[theorem]{Definition}
\numberwithin{equation}{section}
\DeclareMathOperator{\supp}{supp}
\DeclareMathOperator{\ran}{ran}
\newcommand{\cc}{\mathfrak{c}}
\newcommand{\N}{\mathbb{N}}
\newcommand{\Q}{\mathbb{Q}}
\newcommand{\R}{\mathbb{R}}
\newcommand{\PP}{\mathbb{P}}
\newcommand{\F}{\mathcal{F}}
\newcommand{\G}{\mathcal{G}}
\newcommand{\A}{\mathcal{A}}
\newcommand{\B}{\mathcal{B}}
\newcommand{\CC}{\mathcal{C}}
\newcommand{\deff}{\vcentcolon =}
\newcommand{\ang}[1]{\langle #1 \rangle}
\newcommand{\simi}[1]{\overset{\scriptscriptstyle{ #1 }}\sim}
\newcommand{\norm}[1]{\| #1 \|}
\newcommand{\mc}[1]{\mathcal{{#1}}}
\newcommand{\mr}[1]{\mathrm{{#1}}}
\begin{document}

\author{Piotr Koszmider}
\address{Institute of Mathematics of the Polish Academy of Sciences,
ul.  \'Sniadeckich 8,  00-656 Warszawa, Poland}
\email{\texttt{piotr.math@proton.me}}

\author{Ma\l gorzata Rojek}
\address{Warsaw Doctoral School of Mathematics and Computer Science,
ul. S. Banacha 2c,
02-097 Warszawa, Poland}
\email{\texttt{m.rojek3@uw.edu.pl}}

\thanks{The  authors were  partially supported by the NCN (National Science
Centre, Poland) research grant no.\ 2020/37/B/ST1/02613.}
\thanks{The  first-named author would like to thank Jes\'us Castillo for introducing him
to the topic which is the subject of this paper during his visit to Warsaw in the fall of 2023.}
\thanks{The authors would like to thank  Bill Johnson for suggesting literature
concerning finite dimensional Banach spaces which led us to the discovery of the paper \cite{bourgain}
which is crucial in Section~4.}

\subjclass[2020]{03E35, 03E17, 03E65, 03E75, 46B03, 46B25, 46B26, 54G05, }

\title[Injective and automorphic properties of $\ell_\infty/c_0$]{Almost disjoint families and some 
automorphic and injective properties of $\ell_\infty/c_0$}

\begin{abstract} Answering questions of A. Avil\'es, F. Cabello S\'anchez, J.  Castillo, M. Gonz\'alez and Y.  Moreno
we show that the following statements are independent of the usual {\sf ZFC} axioms with arbitrarily large continuum:
for every (some)  $\omega<\kappa<2^\omega$
\begin{enumerate}
\item any linear bounded operator $T: c_0(\kappa)\rightarrow\ell_\infty/c_0$
extends to any superspace of  $c_0(\kappa)$.
\item any isomorphism between any two copies of $c_0(\kappa)$ inside $\ell_\infty/c_0$
extends to an automorphism of $\ell_\infty/c_0$.
\end{enumerate} 
This contrasts with  Boolean, 
Banach algebraic or isometric levels, where the objects known as Hausdorff gap and Luzin gap 
witness the failure in {\sf ZFC} of the corresponding properties for the corresponding structures 
already at the first
uncountable cardinal $\kappa=\omega_1$.

In particular, consistently, any two pairwise disjoint families in $\wp(\N)/Fin$ of the same cardinality $\omega<\kappa<2^\omega$
can be mapped onto each other by a linear automorphism of $\ell_\infty/c_0$ regardless
of their different combinatorial, algebraic or topological positions in $\wp(\N)/Fin$.

Our positive consistency results use a restricted version of Martin's axiom for a partial order
that adds an infinite block diagonal matrix of an operator on $\ell_\infty$ which induces an operator on $\ell_\infty/c_0$.
The construction of
its finite blocks relies on a lemma of Bourgain and Tzafriri on finite dimensional Banach spaces.

Our negative consistency results rely on an analysis of almost disjoint families of $\N$,
the embeddings of $c_0(\kappa)$ into $\ell_\infty/c_0$ they induce and
their extensions to $\ell_\infty^c(\kappa)$.{}

\end{abstract}

\maketitle

\section{Introduction}

For the terminology and notation unexplained in the Introduction, see Section~2.
Recall that a Banach space $X$ is {\sl injective} if every linear bounded operator from any Banach space with
$X$ as a codomain extends to any superspace
of its domain. A~Banach space $X$ is {\sl automorphic} if any isomorphism between any two
of its subspaces of the same codimension extends to an automorphism of $X$.

Banach spaces of the form $C(K)$ for $K$ extremally disconnected (i.e.,
the Stone spaces of complete Boolean algebras, so for example
$\ell_\infty(\kappa)\equiv C(\beta\kappa)$) are known to be injective but it is not known if
there are injective Banach spaces which are not isomorphic to such $C(K)$s. 
The Banach spaces $c_0(\kappa)$, $\ell_2(\kappa)$ for infinite cardinals $\kappa$ are known to be automorphic
(\cite{auto-linros}, \cite{automorphic}) but it is not known if these are the only examples of automorphic Banach spaces
(cf. Questions \ref{questions1} (1)).
Several restricted versions of injective and automorphic properties are considered in the literature more
or less explicitly:

\begin{definition}[\cite{book}]\label{def-main}
Let $X$ and $Y$ be Banach spaces, let $\kappa$ be an infinite cardinal and
let $\B_\kappa$ denote the class of Banach spaces of density less than $\kappa$.
We say that
\begin{itemize}
\item $X$ is universally  separably injective [universally $\kappa$-injective] (universally $Y$-injective)  if
for every separable space $Z$ [for every $Z\in \mathcal B_\kappa$] (for every $Z$ isomorphic to $Y$) and every 
superspace $Z'$ of $Z$ 
every  linear bounded operator $T: Z\rightarrow X$ extends to an operator  $ \tilde T: Z'\rightarrow X$.
\item $X$ is separably extensible [$\kappa$-extensible] ($Y$-extensible) if for every
separable subspace $Z\subseteq X$ [for every subspace $Z\subseteq X$ in $\mathcal B_\kappa$]
(for every subspace $Z \subseteq X$ isomorphic to $Y$)
every linear bounded operator $T: Z\rightarrow X$ extends to an operator  $ \tilde T: X\rightarrow X$.
\item $X$ is separably automorphic [$\kappa$-automorphic] ($Y$-automorphic)
if for every isomorphic subspaces $Z, Z'\subseteq X$ which are separable [are in $\mathcal B_\kappa$]
(are isomorphic to $Y$)
of the same codimension (i.e.,  $dens(X/Z)=dens(X/Z')$)
 every isomorphism between $Z$ and $Z'$ extends to an automorphism of $X$.
\end{itemize}

\end{definition}
For example, Johnson and Zippin proved in \cite{john-zip} that $c_0(\kappa)$ is $Y$-extensible 
for any subspace $Y\subseteq c_0(\kappa)$ and any infinite cardinal $\kappa$, or Lindenstrauss and Rosenthal
proved in \cite{auto-linros} that $\ell_\infty$ is $2^\omega$-automorphic and it is not automorphic, while
in \cite{automorphic} Moreno and Plichko proved that $\kappa$-automorphic ($Y$-automorphic) 
space must be $\kappa$-extensible ($Y$-extensible) for any infinite cardinal $\kappa$
(and any Banach space $Y$). Also, clearly, 
universal \(\kappa\)-injectivity (universal \(Y\)-injectivity) 
implies \(\kappa\)-extensibility (\(Y\)-extensibility) so extensibility is the 
weakest property of all three. \color{black} A~modern account 
and many new results on such restricted injective and automorphic
properties focused on separable injectivity are presented in the monograph
\cite{book} by Avil\'es,  Cabello S\'anchez, Castillo, Gonz\'alez and  Moreno.

In this paper we focus on the above properties for the Banach space $\ell_\infty/c_0\equiv C(\beta\N\setminus \N)$, which
is well known to be rather sensitive to additional set-theoretic assumptions
(\cite{book, universal, christina, dow, drewnowski, cristobal, krupski, stevo}).
It had been an open problem whether $\ell_\infty/c_0$
is an injective Banach space. It was solved in the negative by Amir in \cite{amir}.
In fact, Rosenthal's proof of this result (Proposition  1.2 of \cite{rosenthal}) shows
that $\ell_\infty/c_0$
is not universally $c_0(2^\omega)$-injective.
On the other hand,
it is shown in \cite{book} that $\ell_\infty/c_0$ is universally separably injective and separably automorphic,
 so also separably extensible. 

Fundamental phenomena of uncountable combinatorics already discovered by Hausdorff and Luzin imply that the
 Boolean algebra $\wp(\N)/Fin$ corresponding to $\ell_\infty/c_0$
is not ${\mathit{Fincofin}}(\omega_1)$-injective, nor ${\mathit{Fincofin}}(\omega_1)$-extensible, nor 
${\mathit{Fincofin}}(\omega_1)$-automorphic.\footnote{The definitions of these notions for Boolean algebras 
are analogous to those from Definition \ref{def-main}.}
Motivated by that, the authors of \cite{book} proved the failure of isometric versions of injectivity
of $\ell_\infty/c_0$ already at the first uncountable level $\omega_1$ (Proposition 5.18 of \cite{book}). The same motivation
underlies the main open problem (Problem 12) discussed
in Section 6.4.3. of \cite{book} whether the injective properties of $\ell_\infty/c_0$ as in Definition \ref{def-main}
already fail at the first uncountable level $\omega_1$ possibly due to some similar, perhaps
yet undiscovered, combinatorial phenomena or they may consistently hold, for example,
up to arbitrarily big continuum. Discussing this problem in section 6.4.3 the authors of \cite{book}
write:
\vskip 6pt
{\it ``the problem of injectivity-like properties of $\ell_\infty/c_0$
is connected at a deep level
with its $c_0(\omega_1)$-automorphic character''}
\vskip 6pt
and they state an auxiliary open Problem 13 whether $\ell_\infty/c_0$ is 
$c_0(\omega_1)$-automorphic. The first main result of this paper
shows that this problem is undecidable by the usual axioms of {\sf ZFC}. This is
a consequence of the following two theorems:

\begin{theorem}\label{main-a} If $\mathfrak a=\omega_1$, then
$\ell_\infty/c_0$ is not $c_0(\omega_1)$-automorphic nor $c_0(\omega_1)$-extensible\footnote{Recall that
$\mathfrak a$ stands for the smallest infinite cardinality of a maximal family of infinite subsets of $\N$
whose pairwise intersections are finite (maximal almost disjoint family). It is known that
$\mathfrak a=\omega_1$ is consistent with arbitrarily big continuum (see e.g. \cite{douwen}, \cite{blass}).}.
\end{theorem}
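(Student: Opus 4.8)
The plan is to use a maximal almost disjoint (MAD) family of size $\omega_1$ to build a copy of $c_0(\omega_1)$ inside $\ell_\infty/c_0$ whose position is too rigid to admit the required extensions. Let $\mathcal A=\{A_\alpha:\alpha<\omega_1\}$ be a MAD family, and let $\chi_{A_\alpha}+c_0\in\ell_\infty/c_0$ be the images of the characteristic functions. These span (in the closed-linear-span sense) a subspace isometric to $c_0(\omega_1)$ — this is the standard embedding $c_0(\kappa)\hookrightarrow\ell_\infty/c_0$ induced by an almost disjoint family, as recalled in Section 2. The key point is that maximality of $\mathcal A$ forces $c_0(\omega_1)$ to sit inside $\ell_\infty/c_0$ in a way that cannot be extended: since by the Moreno–Plichko result $c_0(\omega_1)$-automorphic implies $c_0(\omega_1)$-extensible, it suffices to refute extensibility, i.e. to produce an operator $T$ on the copy $Z$ of $c_0(\omega_1)$ that does not extend to an operator $\ell_\infty/c_0\to\ell_\infty/c_0$.

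First I would set up the analysis of how operators on such a copy $Z$ of $c_0(\omega_1)$ interact with $\ell_\infty/c_0$. The natural candidate operator to fail extension is one built from a \emph{partition} of $\omega_1$ into two uncountable pieces, say $T$ acting as the identity on $\overline{\operatorname{span}}\{\chi_{A_\alpha}+c_0:\alpha\in S_0\}$ and as $0$ on $\overline{\operatorname{span}}\{\chi_{A_\alpha}+c_0:\alpha\in S_1\}$, where $\omega_1=S_0\sqcup S_1$; equivalently any $T$ whose behavior "separates" uncountably many coordinates. Suppose toward a contradiction that $\tilde T:\ell_\infty/c_0\to\ell_\infty/c_0$ extends $T$. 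Lift $\tilde T$ (or rather analyze it) and use the fact that $\ell_\infty/c_0\equiv C(\beta\N\setminus\N)$: an operator on this space, restricted to the relevant subspaces, must respect the Boolean/topological structure coming from $\mathcal A$. The heart of the argument is that $\tilde T$ would have to "see" a subset of $\N$ witnessing the split $S_0$ versus $S_1$ on the level of the almost disjoint family, i.e. there would be $B\subseteq\N$ with $B\cap A_\alpha=^* A_\alpha$ for $\alpha\in S_0$ and $B\cap A_\alpha=^*\emptyset$ for $\alpha\in S_1$ (or an approximate/operator-theoretic version of this obtained from a suitable functional analysis of $\tilde T$ on the lifted subspace). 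But such a $B$ would be almost disjoint from every $A_\alpha$ with $\alpha\in S_1$ while almost containing each $A_\alpha$ with $\alpha\in S_0$; in particular $B$ together with $\{A_\alpha:\alpha\in S_1\}$ contradicts the \emph{maximality} of $\mathcal A$ — either $B$ modulo $\{A_\alpha:\alpha\in S_1\}$ contains an infinite set almost disjoint from all of $\mathcal A$, or a direct counting/diagonalization against the $A_\alpha$'s produces such a set. The hypothesis $\mathfrak a=\omega_1$ enters precisely here: it guarantees the existence of a MAD family of size $\omega_1$, so that the copy of $c_0(\omega_1)$ built above genuinely has density $<2^\omega$ but the rigidity phenomenon already occurs.

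I expect the main obstacle to be the passage from the purely Boolean/combinatorial obstruction (no such $B\subseteq\N$ exists) to the Banach-space statement (no bounded operator $\tilde T$ extends $T$). A bounded operator on $\ell_\infty/c_0$ need not be induced by a Boolean homomorphism or by an honest map on $\N$, so one cannot simply read off $B$ from $\tilde T$. The fix is to work with a \emph{lifting} to $\ell_\infty$: extend $T$ first to the closed subspace $\ell_\infty^c(\omega_1)$ of $\ell_\infty$ (the "almost disjoint" superspace consisting of elements supported on $\bigcup\mathcal A$ that are eventually constant along each $A_\alpha$), as the title's abstract hints is the relevant ambient space, and then analyze $\tilde T$ composed with the quotient. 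On $\ell_\infty^c(\omega_1)$ the extended operator is forced to act coordinatewise on an uncountable subfamily (by a pigeonhole/Δ-system argument on the countably many "coordinates" of $\N$ that any single element of the range can depend on), and from coordinatewise action one does extract the set $B$ — or rather a contradiction with $\mathcal A$ being MAD, since the support behavior of the image of $\sum_{\alpha\in F}\chi_{A_\alpha}$ for finite $F$ must stabilize in an incompatible way as $F$ grows through $S_0$. Carrying out this "coordinatewise on an uncountable set" reduction carefully, controlling the norms, and then closing the combinatorial contradiction using maximality of $\mathcal A$, is the crux; once it is in place, both the failure of $c_0(\omega_1)$-extensibility and (via Moreno–Plichko) of $c_0(\omega_1)$-automorphism follow.
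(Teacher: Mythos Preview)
There is a genuine gap. Your combinatorial endgame is wrong: the existence of $B\subseteq\N$ with $A_\alpha\subseteq^*B$ for $\alpha\in S_0$ and $A_\alpha\cap B=^*\emptyset$ for $\alpha\in S_1$ does \emph{not} contradict maximality of $\mathcal A$. For a concrete counterexample, take MAD families $\mathcal A_0$ on the evens and $\mathcal A_1$ on the odds; then $\mathcal A_0\cup\mathcal A_1$ is MAD on $\N$ and is trivially separated by $B=2\N$. So even if you could extract such a $B$ from a putative extension $\tilde T$ of your projection, no contradiction follows. (Separately, the extraction step itself---getting Boolean information out of an arbitrary bounded $\tilde T$---is exactly the hard part, and your ``coordinatewise on an uncountable subfamily'' sketch does not do it; note also that $\ell_\infty^c(\omega_1)$ is a subspace of $\ell_\infty(\omega_1)$, not of $\ell_\infty$.)

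The paper's route is different and avoids both problems. It produces \emph{two} copies of $c_0(\omega_1)$ in $\ell_\infty/c_0$: a ``good'' copy $Z_1$ coming from a coherent family, for which the canonical embedding $c_0(\omega_1)\hookrightarrow C(\N^*)$ \emph{does} extend to an isometric embedding $S:\ell_\infty^c(\omega_1)\to C(\N^*)$ (Proposition~\ref{embedding}), and a ``bad'' copy $Z_2$ coming from a MAD family of size $\mathfrak a=\omega_1$, for which \emph{no} bounded operator $\ell_\infty^c(\omega_1)\to C(\N^*)$ extends the canonical embedding (Theorem~\ref{t-noextension}, proved via Rosenthal's lemma and the norm estimate of Lemma~\ref{norm-outside}). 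The isometry $U:Z_1\to Z_2$ sending $1_{[B_\xi]}\mapsto 1_{[A_\xi]}$ then cannot extend to any operator on $\ell_\infty/c_0$: if $\tilde U$ were such an extension, then $\tilde U\circ S:\ell_\infty^c(\omega_1)\to C(\N^*)$ would send $1_{\{\xi\}}$ to $1_{[A_\xi]}$, contradicting Theorem~\ref{t-noextension}. This refutes $c_0(\omega_1)$-extensibility (take $Z_1$ as the domain subspace) and hence $c_0(\omega_1)$-automorphy. The ingredient you are missing is precisely the ``good'' copy $Z_1$; working only inside the MAD-family copy forces you to prove a harder, and in your formulation false, statement.
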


\begin{proof}
 Let \(X = \overline{\mr{span}}\{1_{[A_{\xi}]} : \xi < \kappa\}\) 
for some almost disjoint family \(\{A_\xi : \xi \in \kappa\} \subseteq \wp(\N)\). 
We will justify that \(X\) has codimension \(2^\omega\) in \(C(\N^*)\). 
Choose any \(A \subseteq A_1\) such that \(A\) and \(A_1\setminus A\) 
are infinite, and denote \(Y = \{f \in C(\N^*) : \supp f \subseteq A\}\). 
The quotient from map from $C(\N^*)$ anto $C(\N^*)/X$
restricted to \(Y\) is an isomorphism and so \(\mr{dens}(C(\N^*)/X) \geq \mr{dens}(Y) = 2^{\omega}\).
To conclude, use Proposition \ref{embedding} and Theorem \ref{t-noextension}. \color{black}
\end{proof}

As the hypothesis $2^{\kappa}>2^\omega$ easily implies, via Rosenthal's argument, that
$\ell_\infty/c_0$ is not universally $c_0(\kappa)$-injective, it should be noted that
the hypothesis \(\mathfrak{a} = \omega_1\) of Theorem \ref{main-a} is consistent 
with $2^{\omega_1} =2^{\mathfrak{a}}=2^\omega$ (this holds e.g., in the Cohen model), so  Theorem \ref{main-a} gives also
a new information about the failure  of $c_0(\kappa)$-injectivity of $\ell_\infty/c_0$ as well.

\begin{theorem}\label{main-ma} Suppose that $\kappa$ is an infinite cardinal.
    Assume $\mathsf{MA}_{\hbox{$\sigma$-{\small{\rm linked}}}}(\kappa)$. 
    Then $\ell_\infty/c_0$ is $c_0(\kappa)$-automorphic and so $c_0(\kappa)$-extensible. 
 \end{theorem}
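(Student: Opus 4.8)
The plan is the following. By the theorem of Moreno and Plichko quoted above (\cite{automorphic}), a $c_0(\kappa)$-automorphic Banach space is automatically $c_0(\kappa)$-extensible, so it suffices to prove that $\ell_\infty/c_0$ is $c_0(\kappa)$-automorphic. Fix subspaces $Z,Z'\subseteq\ell_\infty/c_0$, each isomorphic to $c_0(\kappa)$ and of the same codimension, and an isomorphism $u\colon Z\to Z'$; the goal is an automorphism $\Phi$ of $\ell_\infty/c_0$ with $\Phi|_Z=u$. I will produce $\Phi$ as the operator $\bar{\mathcal A}$ induced on $\ell_\infty/c_0$ by a block diagonal automorphism $\mathcal A=\bigoplus_k A_k$ of $\ell_\infty$: here $\N=\bigsqcup_k I_k$ is a partition into consecutive finite intervals and each $A_k$ is an invertible operator on $\ell_\infty(I_k)$ with $\sup_k\max(\|A_k\|,\|A_k^{-1}\|)<\infty$. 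Any such $\mathcal A$ is an automorphism of $\ell_\infty$ that carries $c_0$ onto $c_0$, hence descends to an automorphism of $\ell_\infty/c_0$. Fixing a $c_0(\kappa)$-basis $(z_\alpha)_{\alpha<\kappa}$ of $Z$, putting $z'_\alpha=u(z_\alpha)$, and choosing bounded lifts $x_\alpha,y_\alpha\in\ell_\infty$ of $z_\alpha$ and $z'_\alpha$ under the quotient map $q\colon\ell_\infty\to\ell_\infty/c_0$ (the analysis of such bases, e.g.\ via Proposition \ref{embedding}, lets one take the supports of the $x_\alpha$ and $y_\alpha$ with convenient almost disjointness properties), it is enough to arrange that $\mathcal A x_\alpha-y_\alpha\in c_0$ for every $\alpha<\kappa$; then $\Phi(z_\alpha)=z'_\alpha$ for all $\alpha$, whence $\Phi|_Z=u$ by linearity and continuity.

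Next I set up the forcing. Let $\PP$ consist of the finite block diagonal approximations to $\mathcal A$: a condition is $p=\big((I^p_k)_{k<n_p},(A^p_k)_{k<n_p},F^p\big)$, where $I^p_0<\dots<I^p_{n_p-1}$ are consecutive finite intervals with $\min I^p_0=0$, each $A^p_k$ is an invertible rational matrix on $\ell_\infty(I^p_k)$ with $\max(\|A^p_k\|,\|(A^p_k)^{-1}\|)\le C$ for a constant $C$ fixed in advance in terms of $\|u\|$, $\|u^{-1}\|$ and the embedding constants, and $F^p\subseteq\kappa$ is finite; and $q\le p$ means that $q$ end-extends $p$ in the first two coordinates, $F^q\supseteq F^p$, and every new block satisfies $\|A^q_k(x_\alpha|_{I^q_k})-y_\alpha|_{I^q_k}\|_\infty\le 2^{-k}$ for all $k$ with $n_p\le k<n_q$ and all $\alpha\in F^p$. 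A filter $G$ meeting the dense sets $E_m=\{p:m\in\bigcup_{k<n_p}I^p_k\}$ and $D_\alpha=\{p:\alpha\in F^p\}$ ($m\in\N$, $\alpha<\kappa$) yields a partition $\N=\bigsqcup_k I_k$ together with matrices $A_k$, hence an operator $\mathcal A=\bigoplus_kA_k$ with $\max(\|\mathcal A\|,\|\mathcal A^{-1}\|)\le C$; and for each $\alpha$, once some condition of $G$ has $\alpha$ in its finite set, all later blocks handle $\alpha$ with error $\le 2^{-k}\to 0$, giving $\mathcal A x_\alpha-y_\alpha\in c_0$ as required. The poset $\PP$ is $\sigma$-linked: any two conditions sharing the same block/matrix data are compatible, a common lower bound being that data together with the union of the two finite sets (no new blocks are added, so the last clause is vacuous), and there are only countably many possible block/matrix data. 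Since $\{E_m:m\in\N\}\cup\{D_\alpha:\alpha<\kappa\}$ has size $\kappa$, $\mathsf{MA}_{\sigma\text{-linked}}(\kappa)$ supplies the required filter, modulo the density statement below.

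The main obstacle --- indeed the only substantial point --- is the density of the sets $E_m$. Given $p$ and $m$, one must adjoin one new block $I^q_{n_p}$ reaching beyond $m$ and a rational invertible matrix $A^q_{n_p}$ on $\ell_\infty(I^q_{n_p})$ with $\max(\|A^q_{n_p}\|,\|(A^q_{n_p})^{-1}\|)\le C$ and $\|A^q_{n_p}(x_\alpha|_{I^q_{n_p}})-y_\alpha|_{I^q_{n_p}}\|_\infty\le 2^{-n_p}$ for the finitely many $\alpha\in F^p$. Choosing the block far out and long enough that it captures the finitely many supports relevant to the $\alpha\in F^p$, the restrictions $x_\alpha|_{I^q_{n_p}}$ and $y_\alpha|_{I^q_{n_p}}$ reflect, up to a controlled constant, the $c_0$-type structure of the bases $(z_\alpha)$ and $(z'_\alpha)$ inside the finite-dimensional space $\ell_\infty(I^q_{n_p})$; thus $x_\alpha|_{I^q_{n_p}}\mapsto y_\alpha|_{I^q_{n_p}}$ is a uniformly bounded, uniformly invertible partial isomorphism between subspaces of $\ell_\infty(I^q_{n_p})$, and the task is to extend it to an automorphism of the whole space $\ell_\infty(I^q_{n_p})$ with norm and inverse norm bounded by a constant independent of the dimension and of $F^p$. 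Merely extending the map is trivial because $\ell_\infty^N$ is $1$-injective, but that destroys all control of the inverse; obtaining the uniform two-sided estimate in finite-dimensional $\ell_\infty^N$ is exactly what the lemma of Bourgain and Tzafriri (\cite{bourgain}) is used for, and this is the crux of the whole argument. Rounding the resulting matrix to rational entries (with $C$ chosen with a little slack at the outset) completes the verification; then $\mathcal A$ is an automorphism of $\ell_\infty$ with $\mathcal A[c_0]=c_0$, so $\Phi=\bar{\mathcal A}$ is an automorphism of $\ell_\infty/c_0$ extending $u$. This shows that $\ell_\infty/c_0$ is $c_0(\kappa)$-automorphic, and $c_0(\kappa)$-extensibility follows from the Moreno--Plichko implication. (The equal-codimension hypothesis played no role in the construction; a posteriori, any two copies of $c_0(\kappa)$ in $\ell_\infty/c_0$ therefore have the same codimension.)
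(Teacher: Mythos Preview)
Your overall architecture matches the paper's: lift the desired automorphism to a block-diagonal automorphism of $\ell_\infty$, build the blocks by a $\sigma$-linked forcing, and invoke Bourgain--Tzafriri to manufacture each finite block. However, your simplification of the poset hides a real problem. By omitting any compatibility constraint between the finite set $F^p$ and the end $N=\max\bigcup_k I^p_k$ of the existing blocks, you make $\sigma$-linkedness trivial (just take the union $F^p\cup F^q$), but you then cannot prove density of $E_m$. The new block is forced to start at $N$, so ``choosing the block far out'' is not available; only its length $N'-N$ is free. For Bourgain--Tzafriri (Lemma~\ref{bourgain2}) you need the partial map $x_\alpha|_{[N,N')}\mapsto y_\alpha|_{[N,N')}$ to have norm and inverse norm below the fixed $\rho$ and the subspaces to be $\rho$-close to $\ell_\infty^{|F^p|}$. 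These bounds are governed by $\|\Pi_{\mathcal F,F^p,N}\|$ and $\|\Pi_{\mathcal G,F^p,N}\|$, which in general grow with $|F^p|$ when $N$ is small relative to $F^p$ (e.g.\ take lifts $f_\alpha$ that agree on a long initial segment; then $\|\sum_{\alpha\in F^p}f_\alpha|_{[N,\infty)}\|$ can be of order $|F^p|$). No choice of $N'$ cures this, so the extension you produce will have block norms depending on $|F^p|$, and the resulting $\mathcal A$ is not bounded. Your appeal to Proposition~\ref{embedding} for ``almost disjoint supports'' does not help: that proposition concerns one particular family, whereas here the copies $Z,Z'\subseteq\ell_\infty/c_0$ are arbitrary, and there is no reason one can pick lifts with almost disjoint supports.

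The paper handles exactly this point by adding condition~(c) of Definition~\ref{def-P}: $\|\Pi_{\mathcal F,a_p,n_p}\|,\|\Pi_{\mathcal G,a_p,n_p}\|\le 2$. This makes $\sigma$-linkedness nontrivial: two conditions with the same $(n,M)$ need a common extension whose finite set is $a_p\cup a_q$, and one cannot simply keep $n$ fixed. The amalgamation Lemma~\ref{amalgamation} produces such an extension by increasing $n$ to some $n_r$; the key estimate splits $\Pi_{\mathcal F,a_p\cup a_q,n}\circ T$ over $a_p$ and $a_q$ and uses condition~(c) for each piece, yielding the uniform bound on $\|V_{\mathcal F\mathcal G}\|,\|V_{\mathcal G\mathcal F}\|$ required by Lemma~\ref{bourgain2}. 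Thus Bourgain--Tzafriri is applied inside the amalgamation lemma, which simultaneously gives $\sigma$-linkedness (Lemma~\ref{ccc}) and the density of the $D_n$ and $E_\xi$ (Lemmas~\ref{D-dense}, \ref{E-dense}). Your sketch would be repaired by inserting this constraint and replacing the ``union the finite sets'' compatibility argument with the amalgamation step; at that point your proof becomes essentially the paper's.
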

\begin{proof} Use Proposition \ref{ma}
\end{proof}

Although we were not able to fully solve Problem 12 of \cite{book},
the above quoted passage from \cite{book} turned out to be quite accurate, as 
using Theorem \ref{main-ma} and an old result of \cite{bfz} we were able to conclude
the consistency of a strong unknown injective property of $\ell_\infty/c_0$:

\begin{theorem}\label{main-inj} It is consistent with arbitrarily big continuum that for
every $\kappa<2^\omega$ every copy of $c_0(\kappa)$ in $\ell_\infty/c_0$
is included in a copy of $\ell_\infty(\kappa)$, and so it is consistent that $\ell_\infty/c_0$
is $c_0(\kappa)$-injective.
\end{theorem}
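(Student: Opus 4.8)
The plan is to combine Theorem \ref{main-ma} with the classical theorem of Bessaga--Pełczyński--Zippin type recorded in \cite{bfz}, which says (in the relevant form) that a copy of $c_0(\kappa)$ inside a Banach space is automatically complemented by a copy of $\ell_\infty(\kappa)$ provided the copy of $c_0(\kappa)$ enjoys a suitable ``rigidity'' or extension property — concretely, that a bounded sequence (resp. $\kappa$-indexed family) of biorthogonal functionals associated to the unit vectors of $c_0(\kappa)$ can be extended, so that the natural map $\ell_\infty(\kappa)\to\ell_\infty/c_0$ induced by the embedded $c_0(\kappa)$ is itself an isomorphic embedding. First I would fix a model of $\mathsf{ZFC}$ in which $2^\omega$ is as large as desired and $\mathsf{MA}_{\sigma\text{-linked}}(\kappa)$ holds for every $\kappa<2^\omega$ simultaneously; such a model is obtained by the usual finite-support ccc iteration (indeed a single step of Cohen-style forcing for Martin's axiom suffices, since $\mathsf{MA}(\kappa)$ for all $\kappa<2^\omega$ is a consequence of $\mathsf{MA}$, and $\mathsf{MA}_{\sigma\text{-linked}}$ is a weakening). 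In that model every $\kappa<2^\omega$ is covered by Theorem \ref{main-ma}, so $\ell_\infty/c_0$ is $c_0(\kappa)$-extensible.

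The key step is to pass from $c_0(\kappa)$-extensibility to the statement that every copy of $c_0(\kappa)$ sits inside a copy of $\ell_\infty(\kappa)$. Let $Z\subseteq\ell_\infty/c_0$ be a subspace isomorphic to $c_0(\kappa)$, with an isomorphism $\iota\colon c_0(\kappa)\to Z$. Using $c_0(\kappa)$-extensibility, the identity-type map can be extended; more precisely, for each $\alpha<\kappa$ let $e_\alpha^*$ be the coordinate functional on $c_0(\kappa)$ and transport it to a functional on $Z$, then by Hahn--Banach extend each to $\ell_\infty/c_0$, obtaining a bounded family; the point is that extensibility gives a \emph{uniformly bounded} operator $\ell_\infty/c_0\to\ell_\infty/c_0$ extending $\iota\circ(\text{something})$, which is exactly the hypothesis needed to invoke \cite{bfz}: the canonical extension of $\iota$ to a map $\bar\iota\colon\ell_\infty(\kappa)\to\ell_\infty/c_0$ (sending the $\alpha$-th unit vector of $\ell_\infty(\kappa)$ to $\iota(e_\alpha)$ and extending $\sigma(\ell_\infty,c_0)$-$\sigma$-continuously, which is legitimate because $\ell_\infty/c_0$ has the relevant completeness/injectivity at the separable level — it is universally separably injective) is bounded below, hence an isomorphic embedding, and $\bar\iota(\ell_\infty(\kappa))\supseteq Z$ is the desired copy of $\ell_\infty(\kappa)$. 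Finally, $\ell_\infty(\kappa)\equiv C(\beta\kappa)$ is injective, so any copy of it inside any Banach space is complemented; thus every operator into $Z\subseteq\bar\iota(\ell_\infty(\kappa))$ extends first to $\bar\iota(\ell_\infty(\kappa))$ (by injectivity of $\ell_\infty(\kappa)$) and then this extension, composed with the projection, witnesses $c_0(\kappa)$-injectivity of $\ell_\infty/c_0$.

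I expect the main obstacle to be the middle step: verifying that the abstract extension operator produced by $c_0(\kappa)$-extensibility is precisely the datum that \cite{bfz} requires to conclude that the induced map on $\ell_\infty(\kappa)$ is bounded below. One has to be careful that extensibility is an intrinsic property (extending operators \emph{into} $\ell_\infty/c_0$ from subspaces of $\ell_\infty/c_0$) whereas what is literally wanted is that the $\ell_\infty$-span of an embedded $c_0$-basis stays separated, and translating between the two requires identifying the extension of the ``coordinate system'' of $Z$ with a bounded projection-like map and then checking that the Bourgain--Tzafriri/\,\cite{bfz} finite-dimensional stability argument applies uniformly in $\kappa$. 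Once that bridge is in place the rest — the forcing preliminaries and the final appeal to injectivity of $\ell_\infty(\kappa)$ — is routine.
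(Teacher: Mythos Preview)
Your proposal has a genuine gap at exactly the point you flagged as the ``main obstacle'', and it also rests on a misreading of the reference. The paper \cite{bfz} is Baumgartner--Frankiewicz--Zbierski, a pure set-theoretic result asserting the consistency of $\mathsf{MA}_{\sigma\text{-linked}}$ together with the embeddability of every Boolean algebra of size $\leq 2^\omega$ into $\wp(\N)/Fin$; it is not a Banach-space theorem of Bessaga--Pe\l czy\'nski type, and it says nothing about when a canonical extension $\bar\iota:\ell_\infty(\kappa)\to\ell_\infty/c_0$ of an embedded $c_0(\kappa)$ is bounded below. In fact that extension-in-place idea is precisely what the paper shows can \emph{fail}: Proposition~\ref{noextension} exhibits, in {\sf ZFC}, an almost disjoint family (any maximal one) for which no bounded linear operator $\ell_\infty^c(\kappa)\to C(\N^*)$ extends the natural $c_0(\kappa)$-embedding. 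So no amount of $c_0(\kappa)$-extensibility of $\ell_\infty/c_0$ will force the weak-$*$ or coordinate-functional extension you describe to be an isomorphic embedding; for some copies of $c_0(\kappa)$ it simply is not.

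The paper's argument avoids this by a different mechanism that uses both ingredients you underweight. First, \cite{bfz} is used only to secure a model in which $\wp(\kappa)$ embeds as a Boolean algebra into $\wp(\N)/Fin$, giving one fixed isometric copy $T_\kappa[\ell_\infty(\kappa)]\subseteq\ell_\infty/c_0$ (and hence one ``good'' copy $T_\kappa[c_0(\kappa)]$). Second, the full \emph{automorphic} conclusion of Theorem~\ref{main-ma}---not mere extensibility---is invoked: given an arbitrary copy $X\cong c_0(\kappa)$, an automorphism $S$ of $\ell_\infty/c_0$ carries $X$ onto $T_\kappa[c_0(\kappa)]$, and then $S^{-1}\big[T_\kappa[\ell_\infty(\kappa)]\big]$ is the desired copy of $\ell_\infty(\kappa)$ containing $X$. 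The deduction of universal $c_0(\kappa)$-injectivity then combines extensibility (to extend $R$ from $T_\kappa[c_0(\kappa)]$ to $T_\kappa[\ell_\infty(\kappa)]$) with injectivity of $\ell_\infty(\kappa)$ (to extend $T_\kappa|c_0(\kappa)$ to the arbitrary superspace $Z$). Your outline never produces the single pre-existing $\ell_\infty(\kappa)$ copy and never uses an automorphism to transport, so the middle step cannot be completed as written.
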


\begin{proof} By the main result of \cite{bfz} $\mathsf{MA}_{\hbox{$\sigma$-{\small{\rm linked}}}}$
with  arbitrarily big continuum  is consistent with the existence of a Boolean embedding
of any Boolean algebra of cardinality not exceeding $2^\omega$ into the algebra $\wp(\N)/Fin$,
in particular this applies to the algebras $\wp(\kappa)$ for every $\kappa<2^\omega$ as
$2^\kappa=2^\omega$ for such $\kappa$s under 
$\mathsf{MA}_{\hbox{$\sigma$-{\small{\rm centered}}}}(\kappa)$. Hence,
in this situation, there are isometric embedding
$T_\kappa:\ell_\infty(\kappa)\rightarrow\ell_\infty/c_0$ for such $\kappa$s. Consider
any copy $X\subseteq \ell_\infty/c_0$  of $c_0(\kappa)$. By Theorem \ref{main-ma}
there is an automorphism $S: \ell_\infty/c_0\rightarrow \ell_\infty/c_0$ such that
$S[X]=T_\kappa[c_0(\kappa)]$. So $S^{-1}[T_\kappa[\ell_\infty(\kappa)]]$ is the required
copy of $\ell_\infty(\kappa)$ containing $X$.

To prove $c_0(\kappa)$-injectivity,
let $R:c_0(\kappa)\rightarrow \ell_\infty/c_0$ be any bounded operator and let $Z$ be any superspace of
$c_0(\kappa)$.
Let $R': T_\kappa [c_0(\kappa)]\rightarrow \ell_\infty/c_0$    be such that $R=R'\circ T_\kappa|  c_0(\kappa)$.
$R'$ can be extended to $R'': T_\kappa[\ell_\infty(\kappa)]\rightarrow \ell_\infty/c_0$
 since $\ell_\infty/c_0$ is $c_0(\kappa)$-automorphic 
by Theorem \ref{main-ma} and so $c_0(\kappa)$-extensible by \cite{automorphic}. 
On the other hand, by the injectivity of $\ell_\infty(\kappa)$, there is an operator $U: Z\rightarrow T_\kappa[\ell_\infty(\kappa)]$
extending $T_\kappa| c_0(\kappa)$. So $R''\circ U$ is the desired extension of $R$.
\end{proof}

The above results should be seen in the Boolean context, where
analogous properties are false in {\sf ZFC} already for $\kappa=\omega_1$.
This is witnessed for example by the Luzin family (see e.g. \cite{hrusak}, \cite{douwen}),
which does not admit any separation into two uncountable parts.
Our results imply, for example, that it is consistent that if one considers a Luzin family $\{A_\xi: \xi<\omega_1\}$
and any other almost disjoint family $\{B_\xi: \xi<\omega_1\}$ which admits
a separation into two uncountable parts, then there
is an automorphism $S$ of $\ell_\infty/c_0$ such that $S(1_{[A_\xi]})=1_{[B_\xi]}$ for
every $\xi<\omega_1$, while there cannot be a Boolean automorphism of 
$\wp(\N)/Fin$ which sends $[A_\xi]$ to $[B_\xi]$.

A similar topic which makes sense in the Boolean context is the question whether
all or no nontrivial automorphisms of a fixed subalgebra of $\wp(\N)/Fin$  like ${\mathit{Fincofin}}(\omega_1)$
extend to an automorphism of $\wp(\N)/Fin$. This is investigated in \cite{auto-alan} and \cite{grzech},
however, the results for ${\mathit{Fincofin}}(\omega_1)$ are negative in {\sf ZFC} as well (Proposition 4.3 of \cite{auto-alan}).

The structure of the paper is as follows. First, in the next section we present notation and terminology
and discuss some preliminaries. In Section 3 we deal with isometric
embeddings of $\ell_\infty^c(\kappa)$ into $\ell_\infty/c_0$ for uncountable $\kappa$. We prove that
there is in {\sf ZFC} an almost disjoint family $\B=\{B_\xi: \xi<\omega_1\}$ such that
there is an isometric embedding $S:\ell_\infty^c(\omega_1)\rightarrow C(\N^*)$ satisfying
$$S(1_{\{\xi\}})=1_{[B_\xi]}$$
for every $\xi<\omega_1$ (Proposition \ref{embedding}). Moreover we obtain:

\begin{theorem}\label{b-equiv}
The following conditions are equivalent\footnote{Recall that the bounding number $\mathfrak b$ is the minimal
cardinality of a family of functions in $\N^\N$ which is unbounded in the order of eventual 
pointwise domination. It is known that $\omega<\mathfrak b\leq\mathfrak a\leq 2^\omega$ (see \cite{blass, douwen}).}
\begin{enumerate}
\item $\mathfrak b=\omega_1$,
\item There is an almost disjoint family $\A=\{A_\xi: \xi<\omega_1\}$ such that
there is no isometric embedding $S:\ell_\infty^c(\omega_1)\rightarrow C(\N^*)$ satisfying
$$S(1_{\{\xi\}})=1_{[A_\xi]}$$
for every $\xi<\omega_1$.
\end{enumerate}
\end{theorem}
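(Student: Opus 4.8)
The plan is to prove the two implications separately, exploiting the structure of $\ell_\infty^c(\kappa)$ — the $c_0$-sum of sets, i.e.\ bounded sequences indexed by $\kappa$ with countable support — and the fact that an isometric embedding $S$ with $S(1_{\{\xi\}})=1_{[A_\xi]}$ is essentially a Boolean/algebraic coding of the almost disjoint family $\A$ inside $\wp(\N)/Fin$ together with compatibility of suprema. The key translation is that such an $S$ exists iff the family $\{[A_\xi]\}$ behaves, with respect to countable joins and the natural order, like the corresponding family $\{1_{\{\xi\}}\}$ in $\ell_\infty^c(\omega_1)$; in particular, for a countable set $E\subseteq\omega_1$ and a bounded family of scalars $(a_\xi)_{\xi\in E}$ the element $S(\sum_{\xi\in E} a_\xi 1_{\{\xi\}})$ must dominate each $a_\xi 1_{[A_\xi]}$ and have norm $\sup_\xi|a_\xi|$, which forces a ``simultaneous almost-extension'' property of the $A_\xi$'s over countable subfamilies.

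For the implication $(1)\Rightarrow(2)$, the hard part, I would start from an unbounded family $\{f_\xi:\xi<\omega_1\}\subseteq\N^\N$ witnessing $\mathfrak b=\omega_1$, well-ordered so that it is $\leq^*$-increasing (possible since $\mathfrak b=\omega_1$: take a scale-like $\leq^*$-increasing unbounded sequence of length $\omega_1$). From this I would build an almost disjoint family $\A=\{A_\xi:\xi<\omega_1\}$ so that the $A_\xi$'s ``track'' the growth of the $f_\xi$'s — for instance letting $A_\xi$ be (a subset of) the graph of $f_\xi$ inside $\N\times\N$, or a diagonal-type set coded from $f_\xi$, so that an eventual domination failure among the $f_\xi$ translates into the non-existence of a common ``upper bound'' set in $\wp(\N)/Fin$ for any uncountably many $A_\xi$. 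Then suppose toward a contradiction that an isometric $S:\ell_\infty^c(\omega_1)\to C(\N^*)$ with $S(1_{\{\xi\}})=1_{[A_\xi]}$ exists. Applying $S$ to the constant-$1$ function on a countable initial segment $E=\alpha$ gives a single $[B]\in\wp(\N)/Fin$ with $A_\xi\subseteq^* B$ for all $\xi<\alpha$ and, crucially, the norm-one condition gives control preventing $B$ from being too big; letting $\alpha$ range over $\omega_1$ and using that $\N^*$ is a Parovichenko space (countable suprema of clopen sets are captured by a single clopen set, i.e.\ $\omega_1\to(\omega_1,\omega+1)$-type tightness phenomena fail) I would extract a single infinite $C\subseteq\N$ almost containing uncountably many $A_\xi$; decoding $C$ back through the tracking construction yields a function in $\N^\N$ eventually dominating uncountably many $f_\xi$, contradicting unboundedness of the $\leq^*$-increasing family. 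This is really the result of Proposition~\ref{embedding} run in reverse: there a \emph{specific} family $\B$ admitting the embedding is built in ZFC, and here the point is that \emph{genericity} of $\A$ coming from an unbounded family obstructs it.

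For the converse $(2)\Rightarrow(1)$ I would prove the contrapositive: assume $\mathfrak b>\omega_1$ and show every almost disjoint $\A=\{A_\xi:\xi<\omega_1\}$ admits such an $S$. The strategy is to mimic the ZFC construction of Proposition~\ref{embedding} but now take advantage of the extra ``bounding room'': because $\mathfrak b>\omega_1$, every $\omega_1$-indexed family of functions arising as the ``growth rates'' demanded by the countable subfamilies of $\A$ is bounded, so one can choose a single increasing sequence $(n_k)_k$ along which all the relevant almost-inclusions become genuine inclusions simultaneously. Concretely, for each countable $E\subseteq\omega_1$ and each ``block'' one needs a natural number beyond which the finitely many sets $A_\xi\cap A_\eta$ ($\xi,\eta\in E$) have stabilised; the sup of these thresholds over all $\xi<\alpha$ defines a function $g_\alpha\in\N^\N$, and $\mathfrak b>\omega_1$ gives a $g^*$ eventually dominating all $g_\alpha$. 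Working inside the interval partition determined by $g^*$ one builds the embedding $S$ block by block exactly as in the proof of Proposition~\ref{embedding}, sending $1_{\{\xi\}}$ to $1_{[A_\xi]}$ and extending isometrically to all of $\ell_\infty^c(\omega_1)$ by the usual gliding-hump/partition-of-unity argument. I expect the main obstacle to be the bookkeeping in $(1)\Rightarrow(2)$: making the coding $f_\xi\mapsto A_\xi$ tight enough that a single clopen set in $\N^*$ absorbing uncountably many $[A_\xi]$ genuinely forces an eventual upper bound for uncountably many $f_\xi$ — i.e.\ ruling out that the embedding ``cheats'' by using the norm-one slack differently on different blocks — which is exactly the place where the isometry hypothesis (as opposed to mere isomorphism) must be used decisively.
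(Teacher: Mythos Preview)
Your proposal has the right instinct --- the isometry hypothesis forces a separation phenomenon --- but the execution in both directions has gaps, and the paper's route is different and considerably shorter.

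For $(1)\Rightarrow(2)$: you correctly sense that $S(1_E)$ for countable $E$ produces a set $B$ with $A_\xi\subseteq^*B$ for $\xi\in E$, but the crucial second half is that one also gets $A_\xi\cap B=^*\emptyset$ for all $\xi\notin E$; this is exactly Proposition~\ref{iso-separation}, and it follows from a direct computation with $S(1_{\{\xi\}}\pm 1_E)$ at points of $[A_\xi]$. Your plan then diverges: you let $E$ range over initial segments $\alpha$ and try to ``extract a single infinite $C$ almost containing uncountably many $A_\xi$''. This step is not justified --- having separators $B_\alpha$ for each $\alpha$ does not produce a single such $C$, and the Parovichenko properties you invoke do not help here. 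The paper instead uses a single fixed countable $E$: by van Douwen's characterization $\mathfrak b_5$, $\mathfrak b=\omega_1$ yields an almost disjoint family $\A=\B\cup\CC$ with $|\CC|=\omega$, $|\B|=\omega_1$, such that $\CC$ cannot be separated from $\B$. Taking $E$ to index $\CC$ and applying Proposition~\ref{iso-separation} would give a separation, contradiction. So the family witnessing (2) is this van Douwen family, not a graph-of-$f_\xi$ family with varying initial segments.

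For $\neg(1)\Rightarrow\neg(2)$: your ``bound all growth functions $g_\alpha$ by a single $g^*$ and build $S$ block by block via gliding hump'' is too vague to assess, and it is not how the paper proceeds. The paper again uses a separation characterization of $\mathfrak b$ (van Douwen's $\mathfrak b_7$): since $\mathfrak b>\omega_1$, for each $\alpha<\omega_1$ the countable family $\{A_\xi:\xi<\alpha\}\cup\{V_\beta:\beta<\alpha\}$ can be separated from $\{A_\xi:\xi\geq\alpha\}$, yielding an almost increasing chain $(V_\alpha)_{\alpha<\omega_1}$ with $A_\xi\subseteq^*V_\alpha$ iff $\xi<\alpha$. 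From this chain one builds a coherent family $(s_\alpha)_{\alpha<\omega_1}$ (Lemma~\ref{iso-chain}) and then a Boolean monomorphism from the countable/cocountable algebra on $\omega_1$ into $\wp(\N)/Fin$ (Lemma~\ref{coherent-embedding}), which induces the isometric $S$. The unifying idea you are missing is that both directions reduce, via Proposition~\ref{iso-separation}, to van Douwen's reformulations of $\mathfrak b$ in terms of separating a countable almost disjoint family from a larger one.
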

\begin{proof}
Use propositions \ref{b=omega1} and \ref{b>omega1}.
\end{proof}
In the fourth section we consider obstructions to isomorphic
embeddings of $\ell_\infty^c(\kappa)$ into $\ell_\infty/c_0$ for uncountable $\kappa$.
We prove:

\begin{theorem}\label{t-noextension} There 
is an almost disjoint family $\A=\{A_\xi: \xi<\mathfrak a\}$ such that
there is no linear operator $S:\ell_\infty^c(\mathfrak a)\rightarrow C(\N^*)$ satisfying
$$S(1_{\{\xi\}})=1_{[A_\xi]}$$
for every $\xi<\mathfrak a$. In particular,
\(\ell_\infty/c_0\) is not universally \(c_0(\mathfrak{a})\)-injective.
\end{theorem}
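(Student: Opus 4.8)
The plan is to let $\A=\{A_\xi:\xi<\mathfrak a\}$ be a \emph{maximal} almost disjoint family of cardinality $\mathfrak a$ (one exists by the definition of $\mathfrak a$), perhaps after re-indexing so that the index set is presented as a disjoint union of $\mathfrak a$ many countable infinite pieces $C_\alpha$, or as a continuous increasing chain of countable sets exhausting $\mathfrak a$, whichever is more convenient. Suppose towards a contradiction that $S\colon \ell_\infty^c(\mathfrak a)\to C(\N^*)$ is a bounded linear operator with $S(1_{\{\xi\}})=1_{[A_\xi]}$ for every $\xi<\mathfrak a$, and put $M=\|S\|\ge 1$. Since the step functions $1_C$ with $C\in[\mathfrak a]^{\le\omega}$ have dense linear span in $\ell_\infty^c(\mathfrak a)$, the operator $S$ is exactly a finitely additive, uniformly bounded assignment $C\mapsto S(1_C)\in C(\N^*)$ with the prescribed values on singletons; it therefore suffices to refute the existence of any such assignment.

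The first step is a localization lemma. Fix a countable $C$ and a lift $g_C\in\ell_\infty$ of $S(1_C)$. For $\xi\notin C$ the vectors $1_C-t\cdot 1_{\{\xi\}}$, $t\in[-1,1]$, have norm one, so $\|S(1_C)-t\cdot 1_{[A_\xi]}\|\le M$ for all such $t$, whence $g_C$ lies within $M-1$ of $0$ on a cofinite subset of $A_\xi$; the same applied to $S(1_{C\setminus\{\xi\}})=S(1_C)-1_{[A_\xi]}$, lifted by $g_C-1_{A_\xi}$, gives that $g_C$ lies within $M-1$ of $1$ on a cofinite subset of $A_\xi$ whenever $\xi\in C$. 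Thus, relative to $\A$, the element $S(1_C)$ behaves like the indicator of $\bigcup_{\xi\in C}[A_\xi]$. Testing $S$ on the norm-one vectors $\sum_i\epsilon_i 1_{C_i}$ over all sign patterns, for finitely many pairwise disjoint countable $C_i$, yields in addition the uniform estimate $\limsup_k\sum_i|g_{C_i}(k)|\le M$.

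The second, and main, step is to convert this into a contradiction with the maximality of $\A$, by extracting from $S$ an infinite set $D\subseteq\N$ with $D\cap A_\xi$ finite for every $\xi<\mathfrak a$ --- which is impossible. Heuristically, along the chosen partition (or chain) the sets $E_\alpha:=\bigcup_{\xi\in C_\alpha}A_\xi$ carry almost all of the mass of $S(1_{C_\alpha})$ while overlapping only slightly, so that a Rosenthal-type disjointification of the uniformly bounded family $\{S(1_{C_\alpha})\}$ must leave an infinite residue of integers escaping every $E_\alpha$, and by maximality such a residue is precisely an infinite subset of $\N$ almost disjoint from all of $\A$. Making this rigorous appears to require building $\A$ with more structure than ``an arbitrary MAD family'' --- for instance as a MAD family stratified into countably many MAD blocks so that the unions $E_\alpha$ themselves form a MAD family --- which both sharpens the localization and exposes the overflow.

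The hard part is exactly this second step. A general bounded operator is neither positive, isometric, nor multiplicative, so the localization lemma controls $S(1_C)$ only up to an additive error of order $\|S\|$, which is far too coarse to read off honest almost-inclusions $\bigcup_{\xi\in C}A_\xi\subseteq^* D$, or genuine disjointness, of the kind needed to contradict maximality. Bridging this gap --- combining a carefully engineered choice of $\A$ with a quantitative disjointification argument that upgrades the crude $O(\|S\|)$ estimate into a genuine combinatorial overflow against $\mathfrak a$ --- is where I expect essentially all of the work in the proof of Theorem \ref{t-noextension} to lie.
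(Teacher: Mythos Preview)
Your proposal is honest about its own gap: you set up a localization lemma that pins down $S(1_C)$ on each $A_\xi$ only up to an additive $O(\|S\|)$ error, and then concede that this is ``far too coarse'' to run the combinatorial endgame you sketch (finding an infinite $D\subseteq\N$ almost disjoint from every $A_\xi$). That is indeed the obstruction, and it is not overcome by stratifying $\A$ into blocks: the operator $S$ need not be positive, and the error $\|S\|-1$ does not go away no matter how you reorganize the index set. So as it stands the argument is incomplete, and the missing idea is not a refinement of the scheme you describe.

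The paper's argument bypasses localization altogether. Instead of trying to read off set-theoretic information from $S(1_C)$, it defines two quantities
\[
r=\inf_{C\in[\kappa]^{\le\omega}}\sup\{|S(f)(x)|:\|f\|=1,\ \supp f\subseteq\kappa\setminus C,\ x\in\textstyle\bigcup_{\xi\notin C}[A_\xi]\},
\]
\[
R=\inf_{C\in[\kappa]^{\le\omega}}\sup\{|S(f)(x)|:\|f\|=1,\ \supp f\subseteq\kappa\setminus C,\ x\notin\textstyle\bigcup_{\xi<\kappa}[A_\xi]\},
\]
and proves two opposite inequalities. The inequality $R\ge r$ holds for \emph{any} almost disjoint family: one builds a sequence of disjointly supported $f_n$ witnessing $r$ at points $x_n\in[A_{\xi_n}]$, thins it by Rosenthal's disjointification lemma, and passes to an accumulation point $x$ of the $x_n$, which necessarily lies outside $\bigcup_\xi[A_\xi]$. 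The opposite inequality $r\ge R+\tfrac12$ is where maximality enters, but \emph{not} by producing a set almost disjoint from $\A$: rather, given any near-extremal $f$ for $R$ with $|S(f)(x)|>R-\tfrac12$ at some $x\notin\bigcup_\xi[A_\xi]$, continuity yields a clopen $[B]\ni x$ on which this holds; since $B$ is not almost covered by finitely many $A_\xi$, maximality forces $B$ to meet $\mathfrak a$ many $A_\xi$ in an infinite set, so one can pick $\xi\notin\supp f$ with $[A_\xi\cap B]\ne\emptyset$ and observe that $\|f\pm 1_{\{\xi\}}\|=1$ while $|S(f\pm 1_{\{\xi\}})|>R+\tfrac12$ on $[A_\xi\cap B]$. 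The two inequalities together are the contradiction. The point is that one never needs to control $S(1_C)$ pointwise; one only needs to add a single $1_{\{\xi\}}$ to a well-chosen $f$ and gain a full unit at a single point, which is immune to the $O(\|S\|)$ slop that blocks your approach.
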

\begin{proof} Use Proposition \ref{noextension}.
\end{proof}

The results of sections 3 and 4, in particular, give the proof of Theorem \ref{main-a}.
In the 5th section we prove results needed to conclude Theorem \ref{main-ma}.
The automorphism of $\ell_\infty/c_0$ constructed in the proof of Theorem \ref{main-ma}
is obtained from an automorphism of $\ell_\infty$ which preserves $c_0$, i.e., can be lifted to $\ell_\infty$.
Such automorphisms of $\ell_\infty$ are represented by appropriate infinite matrices (see Section 2.1.).
We use a restricted version of Martin's axiom to build such a matrix as a block diagonal matrix with finite blocks.
The construction of the finite blocks is based on a lemma of Bourgain and Tzafriri (see Section 2.3)
concerning extensions of isomorphisms from small subspaces of finite dimensional Banach spaces.

It should be noted that already the paper \cite{extensions} of Marciszewski and Plebanek
includes the use of Martin's axiom to construct a linear operator on $\ell_\infty$
which sends a given almost disjoint family of cardinality $\kappa<2^\omega$
to any other one of the same cardinality (which has been later exploited in \cite{sailing}). 
However, it seems that these operators
may not preserve $c_0$ and anyway the results of \cite{extensions}   do not apply to
arbitrary isomorphic copies of $c_0(\kappa)$. 

In particular, given any two Johnson-Lindenstrauss' spaces of the same density
our method of proving Theorem \ref{main-ma} gives
an explicit $\N\times \N$ matrix which sends one of them onto the other. 
In fact, if we face two almost disjoint families of cardinality $\kappa<2^\omega$ instead of
arbitrary copies of $c_0(\kappa)$ our method of constructing the matrix 
can be carried out without need of the lemma of Bourgain and Tzafriri.

The last 6th section contains some questions.

\section{Preliminaries}

\subsection{General terminology and notation}

If $A$ is a set, by $1_A$ we denote its characteristic function defined on an implicitly given domain.
If $f$ is a function, $f|A$ denotes the restriction of $f$ to $A$ and $\ran(f)$ denotes the range of $f$.
By a~simple function, we mean a function that assumes finitely many values.

Let $\wp(X)$ denote the powerset of a set $X$. We will sometimes consider $\wp(X)$ as a Boolean algebra. By ${\mathit{Fincofin}}(X)$ we mean the Boolean subalgebra
of all finite and cofinite subsets of $X$. $Fin$ stands for
the ideal of all finite subsets of $\N$.
The symbols $\vee, \wedge, -$ denote Boolean operations.
   
All Banach spaces that appear in this paper are considered over $\R$. The norms we consider are mainly supremum norms on adequate spaces. Another norm we use will be the quotient norm on \(\ell_\infty/c_0\). It should
be clear from the context of the Banach space which norm is used. The symbols $c_0(\Gamma), \ell_\infty(\Gamma), \ell_1(\Gamma)$ have the
standard meaning for any set $\Gamma$ and $\Gamma$ is omitted
if it is $\N$. By \(\supp f\) we mean the support of function \(f\) and \(c_{00}\) stands for the linear subspace of \(c_0\) of all finitely supported sequences. The subspace of $\ell_\infty(\kappa)$ consisting of countably
supported elements is denoted as $\ell_\infty^c(\kappa).$ We sometimes identify Banach spaces
$c_0(A), \ell_\infty(A), \ell_\infty^c(A), \ell_1(A)$ with the subspaces of $c_0(\Gamma), \ell_\infty(\Gamma), \ell_\infty^c(\Gamma), \ell_1(\Gamma)$ consisting 
of all elements whose supports are included in $A \subseteq \Gamma$, and elements \(f\) defined on a subset $A \subseteq \Gamma$ with \(f \cup \{0\}^{\N \setminus A}\).
To keep the notation consistent with the literature, in the case of a finite set $A$ the symbol $\ell_\infty^A$ 
stands for the finite dimensional space $\R^A$ with the maximum norm.

By an operator on a Banach space we always mean a bounded linear operator. We say that an operator $T$ is bounded below by $r\in \R_{>0}$ if
$r\|x\|\leq \|T(x)\|$ for each $x$ in the domain of $T$.
The algebra of operators on a Banach space $X$ is denoted by $L(X)$. The identity on $X$ is denoted by $I_X$. The symbol
$\equiv$ denotes the relation of linear isometry between Banach spaces. For isomorphic Banach spaces $X, Y$ we will also consider their Banach-Mazur distance $d(X, Y)$, that
is
$$\inf\{\|T\|\|T^{-1}\|: \hbox{$T$ is an isomorphism from $X$ onto $Y$}\}.$$
For $I\subseteq\N$ we will consider
$I \times I$ matrices, which are formally functions from $I\times I$ into $\R$ 
and may represent certain bounded linear operators on \(\ell_{\infty}(I)\) (and/or \(c_0(I)\)). For this, the rows of the matrix must be in $\ell_1$ and the $\ell_1$-norms of the rows
need to be bounded (converge to $0$) (cf. Section 2.1. of \cite{cristobal}). Norm of a matrix will always be the operator norm induced by the supremum norms in the domain
and the codomain. It should be clear that the operator norm of an operator on $c_0(I)$ or on 
$\ell_\infty(I)$ given by a matrix $M$ is the supremum of $\ell_1$-norms of the rows of $M$. We use the usual conventions concerning multiplication of matrices. An $I\times I$ matrix $M$ is called block diagonal if there is a partition of $I$ into finite sets $\{J_i: i\in A\}$
such that $M(j, j')=0$ whenever $j, j'$ belong to different parts of the partition. The $J_i\times J_i$-matrices
obtained from $M$ are called blocks of $M$. $M_n(\Q)$ denotes
$n\times n$ matrices with rational entries.
 
We write $A \subseteq^*{A'}$ if 
$A \setminus A'$ is finite and $A=^*A'$ if $A\subseteq^* A'$ and $A'\subseteq^* A$. 
We denote by $[A]$ the class of a set $A\subseteq \N$ 
with respect to relation $=^*$. Similarly, for two functions
$f, g \in \ell_{\infty}$ we write $f=^* g$ if $f=g + h$ for some $h \in c_0$ 
and we denote by $[f]$ the equivalence class of a function $f$ with respect to this relation.
We consider the quotient structures, the Boolean algebra $\wp(\N)/Fin$ and the Banach space $\ell_\infty/c_0$ (with the usual quotient norm).
Recall that, by Stone duality, we canonically identify any Boolean algebra \(\A\) with the Boolean algebra of all clopen subsets of its Stone space \(K_{\A}\). In particular, $\wp(\N)/Fin$ is identified with the Boolean algebra of
all clopen subsets of $\N^*$ and there is an isometry between $\ell_\infty/c_0$ and the Banach space $C(\N^*)$ of
all continuous functions on $\N^*=\beta\N\setminus \N$ which sends $[1_A]$ to
$1_{[A]}$ for $A\subseteq\N$.

A family  $\A$ of subsets of $\N$ is called an almost disjoint family
if for any $A, A' \in \A$, if $A\ne A'$ then $A \cap A'=^*\emptyset$.
Whenever we consider almost disjoint families, we always assume
that they are infinite and consist of infinite subsets of $\N$.
If \(\A, \B\) are two families of subsets of \(\N\), we say that $\A$ is separated from $\B$
if there is $C\subseteq \N$ such that $A\subseteq^* C$ and $B\cap C=^*\emptyset$
for all $A\in \A$ and $B\in \B$. Such a set $C$ is called a separation of $\A$ and $\B$.
Note that under the identification mentioned above, if $\A$ is an almost disjoint family, then $\{[A]: A\in \A\}$
is a pairwise disjoint family of clopen subsets of $\N^*$ and so the closure of the span
of $\{1_{[A]}: A\in \A\}$ is an isometric copy of $c_0(|\A|)$ in \(C(\N^*)\). 

A partial order $\PP$ is said to be $\sigma$-linked if $\PP=\bigcup\PP_n$, where
for each $n\in \N$ the set $\PP_n$ is linked, i.e., any two elements of $\PP_n$ are compatible in $\PP$.
$\mathsf{MA}_{\hbox{$\sigma$-{\small{\rm linked}}}}(\kappa)$ stands for a restricted version
of Martin's axiom which states that given a $\sigma$-linked partial order $\PP$ and
a family $\mathcal D$ of cardinality $\kappa$ of dense subsets of $\PP$ there is a filter $\mathbb G\subseteq\PP$
such that $\mathbb G\cap \mathbb D\not=\emptyset$ for every $\mathbb D\in \mathcal D$.
For more on the terminology concerning Martin's axiom, see \cite{kunen} or \cite{weiss}.

For further terminology and notation, the reader may consult books \cite{A&K}, \cite{book} and \cite{kunen}.

\subsection{The Banach space $\ell_\infty/c_0$ }      
         
\begin{definition}\label{def-pi} $\pi: \ell_\infty\rightarrow \ell_\infty/c_0$ is the quotient map.
\end{definition}

\begin{definition}\label{sim}
Let \(a\) be a finite set, \(\{ x_{\xi} : \xi \in a\} \subseteq X\), \(\{ y_{\xi}: \xi \in a\} \subseteq Y\) be two indexed sets of vectors in Banach spaces \(X, Y\) respectively and let \(0 < p, q < \infty\) be positive real numbers. We write \(\langle x_{\xi} : \xi\in a\rangle \overset{\scriptscriptstyle{p, q}}\sim \langle y_{\xi} : \xi \in a\rangle\) if for any sequence \(t =(t_\xi)_{\xi\in a} \in \ell_\infty^{a}\) we have
\[
p^{-1}\left\|\sum_{\xi \in a} t_{\xi}y_{\xi}\right\|_{Y} \leq \left\|\sum_{\xi \in a} t_{\xi}x_{\xi}\right\|_{X} \leq q\left\|\sum_{\xi \in a} t_{\xi}y_{\xi}\right\|_{Y}.
\]
\end{definition}

We note a few simple but useful observations.

\begin{lemma}\label{equivsim}
Denote \(X' = \mr{span}\{x_\xi : \xi \in {a}\}\) and \(Y' = \mr{span}\{y_\xi : \xi \in {a}\}\). Then the following are equivalent:
\begin{enumerate}
    \item \(\langle x_{\xi} : \xi \in a\rangle \simi{p, q} \langle y_{\xi} : \xi \in a \rangle\),
    \item there is an isomorphism \(V:X' \to Y'\) such that \(V(x_{\xi}) = y_\xi\) which is bounded from above by \(p\) and from below by \(q^{-1}\),
    \item there is an isomorphism \(V:X' \to Y'\) such that \(V(x_{\xi}) = y_\xi\) with \(\norm{V} \leq p\) and \(\norm{V^{-1}} \leq q\).
\end{enumerate}
\end{lemma}

\begin{lemma}\label{sim-prop}
    The following properties hold:
    \begin{enumerate}
        \item \(\langle x_{\xi} : \xi\in a\rangle \simi{p, q} \langle y_{\xi} : \xi \in a\rangle\) if and only if \(\langle y_{\xi} : \xi \in a\rangle \simi{q, p} \langle x_{\xi} : \xi\in a\rangle\),
        \item if \(\langle x_{\xi} : \xi\in a\rangle \simi{p, q} \langle y_{\xi} : \xi \in a\rangle \) and \( \langle y_{\xi} : \xi \in a\rangle \simi{p', q'} \langle z_{\xi} : \xi\in a\rangle\), then \(\langle x_{\xi} : \xi\in a\rangle \simi{pp', qq'} \langle z_{\xi} : \xi\in a\rangle,\)
        \item if \(\langle x_{\xi} : \xi\in a\rangle \simi{p, q} \langle y_{\xi} : \xi \in a\rangle\) and \(a' \subseteq a\), then \(\langle x_{\xi} : \xi\in a'\rangle \simi{p, q} \langle y_{\xi} : \xi \in a'\rangle\).
    \end{enumerate}
\end{lemma}

\begin{lemma}\label{lifting} Suppose that $a$ is a finite set and $\{ y_\xi : \xi \in a\} \subseteq \ell_\infty$ is such
that \(\{\pi(y_\xi) : \xi \in a\}\) is a linearly independent set.  Let $u>1$.
There exists $N\in \N$ such that \(\ang{y_\xi|[n, \infty) : \xi \in a} \simi{1, u} \ang{\pi(y_\xi) : \xi \in a}\) for all $n>N$.
\end{lemma}

\begin{proof}
Define \(f_n(t) = \sum_{\xi \in a} t_\xi y_\xi|{[n, \infty)}\) for \(n \in \N\) and \(t \in \ell_\infty^a\). We always have \(\norm{f_n(t)} \geq \norm{\pi(f_0(t))}\). For \(n \in \N\) let
\[
E_{n} = \{t \in S_{\ell_\infty^a} : u^{-1}\cdot \|f_n(t)\| < \|\pi(f_0(t))\|\}.
\]
By the continuity of \(\pi\), \(f_n\) and the corresponding
norms, the sets \(E_n\) are open. For every \(t \in S_{\ell_\infty^a}\) we 
have \(\|f_n(t)\| \searrow \|\pi(f_0(t))\|\), by the very 
definition of \(f_n\)s and the norms, and \(\norm{\pi(f_0(a))} > 0\), by 
the fact that \(\{\pi(y_\xi) : \xi \in a\}\) is linearly independent.
Consequently, \(E_n \subseteq E_m\) 
whenever \(n < m\) and for every \(t \in S_{\ell_\infty^a}\) there is \(n \in \N\) 
such that \(t \in E_n\). Since \(S_{\ell_\infty^a}\) is compact, there is \(N \in \N\) such that \(S_{\ell_\infty^{a}} = \bigcup_{n \leq N} E_n = E_N\). Then \(u^{-1}\cdot \|f_n(t)\| < \|\pi(f_0(t))\|\) whenever \(n \geq N\) for all \(t \in S_{\ell_\infty^{a}}\), so also, by linearity, for all \(t \in \ell_\infty^{a}\), which gives us the thesis.
\end{proof}

\begin{lemma}\label{bounded-restriction} Suppose that \(a\) is a finite set and $\{y_\xi : \xi \in a\} \subseteq \ell_\infty(A)$ for some subset \(A \subseteq \N^*\). Let $u>1$.
Then there is $N\in \N$ such that \(\ang{y_\xi : \xi \in a} \simi{1, u} \ang{y_\xi|(A\cap n) : \xi \in a}\) for every $n>N$.
\end{lemma}

\begin{proof}
Denote \(Y = \mr{span}\{y_\xi : \xi \in a\}\), \(Y_n = \mr{span}\{y_\xi|(A\cap n) : \xi \in a\}\) and define \(P_n: Y \to Y_n\) as \(P(y) = y|(A \cap n)\). Clearly, \(P_n\)s are bounded from above by 1. Let
\[
E_{n} = \{y \in S_Y: u^{-1}\cdot \|y\| < \|y|{(A \cap n)}\|\}.
\]
For every \(y \in S_Y\) we have \(\|y|{(A \cap n)}\| \nearrow \|y\| > 0\), 
so \(y \in E_n\) for some \(n \in \N\). Since \(S_Y\) is compact and 
\(E_n\)s are open, there is \(N \in \N\) such that \(S_Y = \bigcup_{n \leq N} E_n = E_{N}\). 
Then for \(n \geq N\) the operator \(P_n\) is bounded below by \(u^{-1}\) on \(S_Y\) so, by linearity, on the whole space \(Y\) as well. By Lemma \ref{equivsim}, this finishes the proof.
\end{proof}

\subsection{Around a theorem of Bourgain and Tzafriri}

\begin{lemma}[Theorem 5 of \cite{bourgain}]\label{bourgain}
For every $K\geq1$, there exist constants $c =c(K) > 0$ and $C=C(K)<\infty$
such that, whenever 
$$\ell^n_p=Y_1\oplus Z_1= Y_2\oplus Z_2$$
for some integer $n$ and some $1\leq p\leq\infty$, and
\begin{enumerate}
\item[(i)] $dim(Y_1) =dim(Y_2) =h$ and $d(Y_1, Y_2)\leq K$,
\item[(ii)]\footnote{$P_{Y_i}$, $P_{Z_i}$ are projections determined
by the decompositions $\ell^n_p=Y_i\oplus Z_i$ for $i=1,2$.} $\|P_{Y_i}\|, \|P_{Z_i}\|\leq K$ for $i=1,2$,
\item[(iii)] $h\leq c\sqrt{n}$,
\end{enumerate}
then
$$d(Z_1, Z_2)\leq C.$$
\end{lemma}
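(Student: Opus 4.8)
The plan is to reduce the statement about the complements $Z_1, Z_2$ to a statement about the subspaces $Y_1, Y_2$, exploiting the fact that $\ell_p^n$ is "nearly" Hilbertian on small-dimensional subspaces — this is where the hypothesis $h \le c\sqrt n$ enters. First I would recall the key phenomenon: although $\ell_p^n$ has Banach--Mazur distance of order $n^{|1/p-1/2|}$ from $\ell_2^n$, a random (or suitably chosen) $h$-dimensional subspace with $h \le c\sqrt n$ is $2$-isomorphic (or $(1+\delta)$-isomorphic, after shrinking $c$) to $\ell_2^h$; more to the point, one wants a quantitative dichotomy saying that any $h$-dimensional subspace of $\ell_p^n$ with $h$ small compared to $\sqrt n$ admits a further large quotient/complementation that looks Euclidean. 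The cleanest route is: show that under (i)--(iii) one may find a linear automorphism $u$ of $\ell_p^n$ with $\|u\|\,\|u^{-1}\|\le C'=C'(K)$ carrying $Y_1$ onto $Y_2$; then $u$ carries the decomposition $\ell_p^n = Y_1\oplus Z_1$ to $\ell_p^n = Y_2 \oplus u[Z_1]$, and a change-of-projection argument (below) compares $u[Z_1]$ with $Z_2$.

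The main technical device I expect to use is a \emph{perturbation-of-projections} lemma: if $P, Q$ are two bounded projections on a Banach space with the same range $Y_2$ and $\|P-Q\|$ small (or merely $\|P-Q\|\,\|$something$\|<1$), then $I - (P-Q)$ restricted appropriately is invertible and conjugates $\ker P$ onto $\ker Q$ with controlled norm. So the heart of the argument is to produce, from the isomorphism $v\colon Y_1\to Y_2$ with $\|v\|\,\|v^{-1}\|\le K$ and from the projections $P_{Y_i}, P_{Z_i}$ of norm $\le K$, an \emph{ambient} automorphism $u$ of $\ell_p^n$ that (a) extends $v$ up to bounded error and (b) is close enough to the identity on the relevant pieces that the projection-perturbation lemma applies. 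Step by step: (1) use $P_{Y_1}$ to write any $x$ as $P_{Y_1}x + P_{Z_1}x$; (2) define a first-approximation map $u_0 = v\circ P_{Y_1} + P_{Z_2}\circ(\text{inclusion})$ or similar, which already sends $Y_1$ into $Y_2$ but need not be onto or invertible; (3) correct $u_0$ to an invertible $u$ with $\|u\|\,\|u^{-1}\| \le C'(K)$ — this is exactly the point where smallness of $h$ relative to $\sqrt n$ must be invoked, because in general one cannot extend an isomorphism between $K$-isomorphic subspaces to an ambient automorphism of $\ell_p^n$ with constants independent of $n$ unless the subspaces are small (this is the Bourgain--Tzafriri circle of ideas, related to their restricted invertibility work); (4) having $u$, observe $u[Z_1]$ and $Z_2$ are both complements of $Y_2$, compare the associated projections, bound their difference using $\|u-I\|$-type estimates on $Y_2^{\perp}$-substitutes, and conclude $d(Z_1, Z_2) \le \|u\|\,\|u^{-1}\|\cdot(1+\text{small}) \le C(K)$.

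The hard part will be step (3): producing the ambient automorphism with constants depending only on $K$. This is genuinely where hypothesis (iii) is used and cannot be circumvented — one needs that an $h$-dimensional subspace with $h\le c\sqrt n$ can be "rotated" inside $\ell_p^n$ with uniformly bounded distortion, which rests on concentration-of-measure / random-subspace estimates for $\ell_p^n$ (Dvoretzky-type or Johnson--Lindenstrauss-type flattening arguments), or alternatively on a direct combinatorial selection of coordinates. A secondary subtlety is bookkeeping the constants: each of the three perturbation steps multiplies the distortion by a factor that is a function of $K$ only, so one must fix $c(K)$ small enough at the outset that all the "small" quantities ($\|u-I\|$ on the correction pieces, $\|P-Q\|$ in the projection lemma) stay below the thresholds where invertibility is guaranteed; this forces $c(K)\to 0$ as $K\to\infty$, consistent with the statement. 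Everything else — compactness-free finite-dimensional linear algebra, norm estimates for products and inverses of matrices as recalled in Section 2.1 — is routine.
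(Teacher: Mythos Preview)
The paper does not prove this lemma: it is quoted verbatim as Theorem~5 of Bourgain--Tzafriri \cite{bourgain} and used as a black box (the paper's own contribution is the easy deduction of Lemma~\ref{bourgain2} from it). So there is no proof in the paper to compare your proposal against.

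That said, your plan deserves a comment because it is, in effect, the reverse of what the paper does with this lemma. You propose to first produce an ambient automorphism $u$ of $\ell_p^n$ extending $v\colon Y_1\to Y_2$ with $\|u\|\,\|u^{-1}\|\le C'(K)$, and then deduce $d(Z_1,Z_2)\le C(K)$ by comparing the two complements $u[Z_1]$ and $Z_2$ of $Y_2$. The paper runs the implication the other way: it \emph{assumes} Lemma~\ref{bourgain}, obtains an isomorphism $R\colon Z_1\to Z_2$ with controlled norm, and builds the ambient automorphism as $\tilde T = TP_{Y_1} + RP_{Z_1}$ (this is exactly Lemma~\ref{bourgain2}). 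Your steps (1), (2), (4) and the projection--perturbation argument are fine and show that the two formulations are elementarily equivalent; in particular, your step (4) is correct: two complements of the same $Y_2$ with bounded projection norms are isomorphic via $(I-P_2)|_{Z_1}$ with inverse $(I-P_1)|_{Z_2}$.

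The gap is step (3), which you yourself flag as the hard part. Everything you wrote there (``concentration of measure'', ``Dvoretzky-type'', ``combinatorial selection of coordinates'', ``restricted invertibility'') is in the right neighbourhood, but it is not a plan --- it is a list of keywords attached to the Bourgain--Tzafriri paper. Producing either the ambient automorphism or the bound on $d(Z_1,Z_2)$ from scratch \emph{is} the content of \cite{bourgain}, and your outline does not isolate a concrete mechanism (e.g.\ which coordinates to select, how restricted invertibility is applied to the projections $P_{Y_i}$, how the $\sqrt n$ threshold arises). As written, the proposal reduces the lemma to an equivalent statement and then defers to the same source the paper defers to.
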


\begin{lemma}\label{bourgain2} For every $\rho > 1$ there are $0 < c_1(\rho), c_2(\rho)<\infty$ such that
whenever $Y_1, Y_2$ are subspaces of $\ell^n_\infty$ for some $n\in \N$ and \(T: Y_1 \to Y_2\) is an isomorphism which satisfy
\begin{enumerate}
\item $dim(Y_1)=dim(Y_2)=h\leq c_1(\rho)\sqrt{n}$,
 \item $d(Y_i, \ell_{\infty}^h) \leq \rho$ for $i=1,2$,
\item $\|T\|, \|T^{-1}\|\leq\rho$, 
\end{enumerate}
then there is an automorphism $\tilde T$ of $\ell^n_\infty$ extending \(T\) such that $\|\tilde T\|, \|\tilde T^{-1}\|<c_2(\rho)$.

Moreover, if $Y_1, Y_2$ are generated by vectors $\{v_k: k<h\}$ and $\{w_k: k<h\}$,
respectively, all whose coordinates are rational, and
$T(v_k)=w_k$ for all $k<h$, then we can choose $\tilde T$ as above whose matrix has all entries rational.
\end{lemma}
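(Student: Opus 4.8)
The plan is to derive Lemma~\ref{bourgain2} from Lemma~\ref{bourgain} by first extending $T: Y_1 \to Y_2$ to an isomorphism defined on all of $\ell_\infty^n$, and then post-composing with a correction that turns it into an automorphism. First I would apply Lemma~\ref{bourgain} in the case $p = \infty$. Since $d(Y_i, \ell_\infty^h) < \rho$ for $i = 1, 2$, we have $d(Y_1, Y_2) \leq \rho^2$, so we can take $K = K(\rho)$ to be (say) $\rho^2$ augmented by a bit to absorb the projection norms appearing next. The standard fact (sometimes attributed to Pe\l czy\'nski, or provable directly using that $\ell_\infty^h$ is a $\mathcal{P}_1$-space applied to the $\rho$-isomorphic copy $Y_i$) is that a subspace $Y_i \subseteq \ell_\infty^n$ with $d(Y_i, \ell_\infty^h) < \rho$ admits a projection $P_{Y_i}: \ell_\infty^n \to Y_i$ with $\|P_{Y_i}\| \leq \rho$; set $Z_i = \ker P_{Y_i}$, so that $\|P_{Z_i}\| = \|I - P_{Y_i}\| \leq 1 + \rho$. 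Thus with $c_1(\rho)$ chosen to be the constant $c(K)$ of Lemma~\ref{bourgain} for this $K$, hypothesis (1) gives $h \leq c(K)\sqrt{n}$, and Lemma~\ref{bourgain} produces an isomorphism $U: Z_1 \to Z_2$ with $\|U\|\|U^{-1}\| \leq C(K)$; after rescaling we may also assume $\|U\|, \|U^{-1}\| \leq \sqrt{C(K)}$, or just keep both $\|U\|, \|U^{-1}\| \leq C(K)$.

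Next I would assemble the extension: define $\tilde T_0 := T \circ P_{Y_1} + U \circ P_{Z_1}$, which maps $\ell_\infty^n = Y_1 \oplus Z_1$ onto $Y_2 \oplus Z_2 = \ell_\infty^n$, agrees with $T$ on $Y_1$, is invertible with inverse $T^{-1} \circ P_{Y_2} + U^{-1} \circ P_{Z_2}$, and has $\|\tilde T_0\|, \|\tilde T_0^{-1}\|$ bounded by a constant depending only on $\rho$ through $\rho$, $C(K)$ and the projection norms — call this bound $c_2'(\rho)$. This already proves the first assertion of the lemma with $\tilde T = \tilde T_0$ and $c_2(\rho) = c_2'(\rho)$; note I never needed (2) and (3) separately for the bound beyond feeding them into $K$ and the projection estimates. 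The one subtlety is that Lemma~\ref{bourgain} is a pure isomorphism statement and gives no control matching $U$ to anything concrete, but since we only need $\tilde T$ to extend $T$ (not to be canonical), that is harmless.

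For the ``moreover'' clause, suppose $Y_1, Y_2$ are spanned by rational vectors $\{v_k\}$, $\{w_k\}$ with $T(v_k) = w_k$. The point is that the above construction, except for the choice of $U$, $P_{Y_i}$, is already ``rational up to arbitrarily small perturbation'': the spaces $Y_i$ are rational subspaces, so one can choose the complements $Z_i$ to be rational subspaces and the projections $P_{Y_i}$ to be rational matrices, at the cost of increasing their norm bounds by an arbitrarily small amount (approximate the real projection entrywise by rationals; on a fixed finite-dimensional space this perturbs the norm by less than any prescribed $\delta$, and for $\delta$ small enough the perturbed map is still a projection onto $Y_i$ after one more correction, or simply enlarge the constant $\rho$ in hypothesis (ii) of Lemma~\ref{bourgain} to $2\rho$ to absorb the slack). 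Similarly, having obtained the real isomorphism $U: Z_1 \to Z_2$, replace it by a rational matrix $U'$ with $\|U - U'\|$ small enough that $U'$ is still an isomorphism $Z_1 \to Z_2$ with $\|U'\|, \|U'^{-1}\| \leq 2C(K)$; this is possible because $Z_1, Z_2$ are rational subspaces and isomorphisms between them form an open set in matrix space, which is dense in rationals. Then $\tilde T := T \circ P_{Y_1} + U' \circ P_{Z_1}$ is a rational matrix (a sum of products of rational matrices — here $T$ on $Y_1$ is rational because it sends rational $v_k$ to rational $w_k$, hence extends to a rational matrix on $\ell_\infty^n$), it extends $T$, and $\|\tilde T\|, \|\tilde T^{-1}\| < c_2(\rho)$ for a suitably enlarged $c_2(\rho)$.

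The main obstacle I anticipate is the bookkeeping of constants: one must be careful that the constant $K$ fed into Lemma~\ref{bourgain} (which simultaneously governs $d(Y_1, Y_2)$, $\|P_{Y_i}\|$, $\|P_{Z_i}\|$) is itself a function of $\rho$ only, and that the resulting $c(K), C(K)$ then legitimately define $c_1(\rho), c_2(\rho)$; and in the rational case, that all the small perturbations can be taken uniformly small enough that no norm bound blows up and the perturbed maps retain the required invertibility and projection properties. None of this is deep, but it requires writing the inequalities in the right order. A secondary minor point is justifying that a $\rho$-isomorphic copy of $\ell_\infty^h$ inside $\ell_\infty^n$ is $\rho$-complemented; this follows from injectivity of $\ell_\infty^h$ (it is a $1$-injective space) applied to the near-isometry $Y_i \to \ell_\infty^h$, extending its inverse over $\ell_\infty^n$.
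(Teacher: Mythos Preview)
Your main construction is exactly the paper's: both of you build projections $P_{Y_i}$ of norm $\le\rho$ using the $1$-injectivity of $\ell_\infty^h$ (the paper does this explicitly via Hahn--Banach on the coordinate functionals, you cite it as a fact), set $K=K(\rho)$ to cover $d(Y_1,Y_2)\le\rho^2$ and $\|P_{Z_i}\|\le 1+\rho$, apply Lemma~\ref{bourgain}, rescale the resulting isomorphism between the complements, and assemble $\tilde T=TP_{Y_1}+UP_{Z_1}$ with inverse $T^{-1}P_{Y_2}+U^{-1}P_{Z_2}$. The constant bookkeeping you flag as the main obstacle is handled identically.

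For the ``moreover'' clause the paper takes a different and cleaner route than you do. You try to make every ingredient rational from the outset: rational complements $Z_i$, rational projections $P_{Y_i}$, rational approximation $U'$ of $U$. This works, but your justification for the rational projections is where the argument is loose: approximating $P_{Y_i}$ entrywise by rationals does \emph{not} give a projection onto $Y_i$, and ``after one more correction'' is undefined. The correct statement is that, since $Y_i$ has a rational basis, one can choose a rational basis of a complement close to $\ker P_{Y_i}$; the projection onto $Y_i$ along this rational complement is then automatically a rational matrix with norm close to $\|P_{Y_i}\|$. Once this is said, the rest of your plan (rational $U'$ dense among isomorphisms $Z_1\to Z_2$, then form $\tilde T$) goes through. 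The paper instead leaves all the analysis over $\R$, extends $\{v_k:k<h\}$ to a rational basis $\mathcal B$ of $\ell_\infty^n$, observes that the matrix of $\tilde T$ from $\mathcal B$ to the standard basis has its first $h$ columns already rational (they are just the $w_k$), perturbs only the remaining columns to rationals inside the open set of operators with the required norm bounds, and then changes basis back by the rational matrix $M_{\mathcal B}^{st}(Id)^{-1}$. This avoids entirely the question of producing rational projections with controlled norm, at the cost of a small change-of-basis computation.
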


\begin{proof}
Let \(S_i: Y_i \to \ell_{\infty}^h\) for \(i = 1, 2\) be isomorphisms such that
 \(\|S_i \| \cdot \|S_i^{-1}\| \leq \rho\). 
 Let $\phi^i_j=S_i^*(e_j^*)$ for $1\leq j\leq h$ and $i=1,2$. So
$\|\phi^i_j\|\leq \|S_i\|$. Using the Hahn-Banach theorem, extend each $\phi^i_j$ to $\psi^i_j\in {(\ell_\infty^n)}^*$
preserving the norm. Define $P_{Y_i}: \ell_\infty^n\rightarrow Y_i$ by
$$P_{Y_i}(f)=S_i^{-1}(\psi^i_1(f), \dots, \psi^i_{h}(f))$$
for $f\in \ell^n_\infty$, $i=1,2$. It is clear that $P_{Y_i}$ is a projection
from $\ell^n_\infty$ onto $Y_i$ of the norm not exceeding $\rho$, so $P_{Z_i} = I-P_{Y_i}$  is a projection
onto $\mathrm{ker}(P_{Y_i})=Z_i$ of the norm not exceeding $1+\rho$. By part (3) of the hypothesis
$d(Y_1, Y_2)\leq \rho^2$. 

So we are in a position to apply Lemma \ref{bourgain} for $K=\rho^2+1$. The conclusion is
that there are $c(\rho^2+1), C(\rho^2+1)$ such that if it happens that $h\leq c(\rho^2+1)\sqrt{n}$,
then $d(Z_1, Z_2)\leq C(\rho^2+1)$. So there is an isomorphism
\(Q: Z_1 \to Z_2\) such that \(\norm{Q} \cdot \norm{Q^{-1}} \leq C(\rho^2 +1)\). 
Let $R=\sqrt{\|Q^{-1}\|/\|Q\|}Q$. Then \(R^{-1} = 
\sqrt{\|Q\|/\|Q^{-1}\|}Q^{-1}\) and $\|R\|=\|R^{-1}\|=\sqrt{\|Q\|\cdot\|Q^{-1}\|}
\leq \sqrt{C(\rho^2 + 1)}$.
Now define $\tilde T: \ell^n_\infty\rightarrow \ell^n_\infty$ by
$$\tilde T=TP_{Y_1}+ RP_{Z_1}.$$
It is clear that $\tilde T|Y_1=T$. Also
$$\tilde T(T^{-1}P_{Y_2}+ R^{-1}P_{Z_2})=(TP_{Y_1}+ RP_{Z_1})(T^{-1}P_{Y_2}+ R^{-1}P_{Z_2})=$$
$$=TT^{-1}P_{Y_2}+0+0+ RR^{-1}P_{Z_2}=P_{Y_2}+P_{Z_2}=I .$$
We obtain  $(T^{-1}P_{Y_2}+ R^{-1}P_{Z_2})\tilde T=I$ in an analogous way. So
$${\tilde T}^{-1}=T^{-1}P_{Y_2}+ R^{-1}P_{Z_2}.$$
As a result, we have
$$\|\tilde T\|, \|{\tilde T}^{-1}\|\leq \rho^2 + \sqrt{C(\rho^2+1)}\cdot (\rho+1)$$ 
So
{}$c_2(\rho)=\rho^2 + \sqrt{C(\rho^2+1)}\cdot (\rho+1)+1$ {} and 
{}$c_1(\rho)=c(\rho^2+1)${} work.

To prove the last part of the lemma, choose vectors \(\{v_k : h \leq k < n\}\)
with rational coordinates such that \(\mathcal{B} = \{v_k : k<n\}\) 
is a basis for \(\ell_{\infty}^n\). 
The set
\[
\mathcal{V} = \{U \in L(\ell_{\infty}^n) : U \text{ invertible}; \norm{U}, \norm{U^{-1}} < c_2(\rho)\}
\] 
is open in the norm topology on \(L(\ell_\infty^n)\) and the operator \(U \xmapsto{f} \mathcal{M}_{\mathcal{B}}^{st}(U)\),
which assigns to every \(U \in L(\ell_{\infty}^n)\) the matrix of \(U\)
with respect to the basis \(\mathcal{B}\) and the standard basis, is a 
continuous linear isomorphism, where the space of matrices is also considered with the norm topology. As a result, \(f[\mathcal{V}]\) is also open. 
Moreover, for every \(k < h\), \(Uv_k = w_k\) if and only if \(\mathcal{M}_{\B}^{st}(U)(i,k) =
w_k(i)\) for all \(i < n\). Choose any \(\tilde T\) from the thesis of the 
first part of the lemma. The matrix \(\mathcal{M}_{\mathcal{B}}^{st}(\tilde T)\) 
has rational entries in the first \(h\) columns and \(\mathcal{M}_{\mathcal{B}}^{st}(\tilde T) 
\in f[\mathcal{V}]\), so we can find \(A \in f[\mathcal{V}]\)
with all entries rational and \(w_k(i)=\mathcal{M}_{\mathcal{B}}^{st}(\tilde T)(i, k) 
= A(i, k)\) for \(k < h, i < n\).
By the assumption that \(\{v_{k} : k < n\}\) are
vectors with rational coordinates, \(\mathcal{M}_{\mc{B}}^{st}(Id) \in M_n(\Q)\). 
Since in the standard algorithm for inverting a matrix the entries of \(\mathcal{M}_{st}^{\mc{B}}(Id) = \mathcal{M}_{\mc{B}}^{st}(Id)^{-1}\) 
are obtained from the entries of \(\mathcal{M}_{\mc{B}}^{st}(Id)\) by a number of multiplications, 
additions and divisions of these entries (for instance, using Gaussian elimination), 
we know that also \(\mc{M}_{st}^{\mc{B}}(Id) \in M_n(\Q)\). If \(\tilde U = f^{-1}(A)\), then
\[\mathcal{M}_{{st}}^{st}(\tilde U) = \mathcal{M}_{st}^{\mathcal{B}}(Id)
\cdot \mathcal{M}_{\mathcal{B}}^{st}(\tilde U) \in M_n(\Q).\]
\end{proof}

\section{On isometric embeddings of $\ell_\infty^c(\omega_1)$ into $C(\N^*)$}

\begin{proposition}\label{embedding}
    There exists an almost disjoint family \(\{A_{\xi} : {\xi < \omega_1}\} 
    \subseteq \mathcal{P}(\N)\) and an isometric embedding
     \(S: \ell_{\infty}^{c}(\omega_1) \to C(\N^*)\) such that $S(1_{\{\xi\}})=1_{[A_\xi]}$.
\end{proposition}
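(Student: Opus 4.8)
We need to build an almost disjoint family $\{A_\xi:\xi<\omega_1\}$ together with an isometric linear embedding $S:\ell_\infty^c(\omega_1)\to C(\N^*)$ sending $1_{\{\xi\}}$ to $1_{[A_\xi]}$. The natural candidate for $S$, once the $A_\xi$ are fixed, is the map that sends a countably supported bounded family $(t_\xi)_{\xi<\omega_1}$ to the class $[\,\sum_\xi t_\xi 1_{A_\xi}\,]$ in $\ell_\infty/c_0$ — but for this to even make sense (the sum is genuinely infinite since the support is countably infinite in general) and to be an isometry, we need the $A_\xi$ to be almost disjoint \emph{and} to have a strong "independence" property: for every countable $F\subseteq\omega_1$ and every way of restricting the $A_\xi$, $\xi\in F$, to the part of $\N$ where each is "on", the sup over $\N^*$ of $|\sum_{\xi\in F} t_\xi 1_{[A_\xi]}|$ must equal $\sup_{\xi\in F}|t_\xi|$. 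Since the $[A_\xi]$ are pairwise disjoint clopen sets in $\N^*$, on each $[A_\xi]$ the function $\sum t_\eta 1_{[A_\eta]}$ is just $t_\xi$; so the sup is at least $\sup_\xi|t_\xi|$, and it is at most that provided $\bigcup_{\xi\in F}A_\xi$ is \emph{all} of $\N$ modulo finite on the relevant piece — more precisely we need that the union $\bigcup_{\xi\in F}A_\xi$ is co-finite, or at least that the part of $\N^*$ outside $\bigcup_{\xi\in F}[A_\xi]$ does not push the supremum up; the cleanest way is to arrange $\bigcup_{\xi\in F}A_\xi =^* \N$ for every infinite countable $F$, equivalently that $\{A_\xi\}$ is an almost disjoint family that is also a (strong) partition-like cover. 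Actually one wants: for every countable $F$, $\N\setminus\bigcup_{\xi\in F}A_\xi$ is finite. An almost disjoint family with this property on \emph{every} countable subfamily is exactly what a standard recursive construction of length $\omega_1$ produces if we are careful.

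The construction: enumerate $\omega_1$ and build $A_\xi\subseteq\N$ by recursion. At stage $\xi$, we have built $\{A_\eta:\eta<\xi\}$, an almost disjoint family of infinite sets such that for every countably infinite $F\subseteq\xi$, $\bigcup_{\eta\in F}A_\eta=^*\N$. Fix a bijection $\N\to\xi$ (possible since $\xi$ is countable — this is where $\omega_1$, not larger, is used) and, going through $\N$, let $A_\xi$ pick one fresh natural number from "far out" in each $A_\eta$, $\eta<\xi$, in a diagonal fashion, so that $A_\xi\cap A_\eta$ is infinite... no: we need $A_\xi\cap A_\eta$ \emph{finite}. The correct move is the classical one: $A_\xi$ is chosen to be an infinite set almost contained in $\bigcup_{\eta\in F_0}A_\eta$ for a carefully chosen $F_0$, meeting each $A_\eta$ in a finite (even empty past some point) set, yet "thick enough" that adding it preserves the property that every countably infinite subfamily covers $\N$ mod finite. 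Concretely: fix an increasing sequence $\eta_0<\eta_1<\cdots$ cofinal-in-type-$\omega$ enumeration of $\xi$ (or of a cofinal-$\omega$ subset if $\xi$ is a limit; if $\xi$ is a successor adapt), and partition $\N$ (mod finite) into infinitely many infinite pieces $D_n$ with $D_n\subseteq^* A_{\eta_n}$, which is possible because $\bigcup_n A_{\eta_n}=^*\N$ and the $A_{\eta_n}$ are almost disjoint. Put $A_\xi=\bigcup_n (D_n\cap [m_n,\infty))$ for a fast-growing $m_n$, so $A_\xi$ meets each $A_{\eta_n}$ in a set $\subseteq^* D_n\cap[m_n,\infty)$ — infinite, which is bad. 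To fix the almost-disjointness, instead take \emph{one} point from each $D_n$: $A_\xi=\{d_n:n\in\N\}$ with $d_n\in D_n$ chosen above $n$; then $A_\xi\cap A_\eta$ is finite for every $\eta<\xi$ (it meets $D_n$, hence $A_{\eta_n}$, in at most one point, and any fixed $A_\eta$ equals some $A_{\eta_n}$ or is almost disjoint from all $D_n$). This $A_\xi$ is infinite and almost disjoint from everything earlier. It remains to check the covering property is preserved: for a countably infinite $F\subseteq\xi+1$, if $\xi\notin F$ it is inherited; if $\xi\in F$, then... $A_\xi$ alone is not cofinite, so this needs $F\setminus\{\xi\}$ still infinite to cover, which it is since $F$ is infinite. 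Good — so the invariant survives, and the recursion runs through all $\xi<\omega_1$.

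With the family fixed, define $S((t_\xi)_\xi)=\pi\bigl(\sum_\xi t_\xi 1_{A_\xi}\bigr)$; the sum is a well-defined element of $\ell_\infty$ because the $A_\xi$ restricted to any $n\in\N$ — i.e., which $A_\xi$ contain $n$ — form a finite-to-... no, possibly infinitely many $A_\xi$ contain a given $n$, so $\sum_\xi t_\xi 1_{A_\xi}(n)$ need not converge. The right definition is to first pick, for the countable support $F$, disjointified versions $A'_\xi\subseteq A_\xi$ with $A'_\xi=^*A_\xi$ and the $A'_\xi$ genuinely pairwise disjoint (possible since countably many almost disjoint sets can be disjointified mod finite), and set $S((t_\xi))=\pi(\sum_{\xi\in F}t_\xi 1_{A'_\xi})$, noting the class is independent of the disjointification. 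Then $S$ is clearly linear, $S(1_{\{\xi\}})=[1_{A'_\xi}]=[1_{A_\xi}]=1_{[A_\xi]}$, and the isometry claim reduces to: for each countable $F$ and bounded $(t_\xi)_{\xi\in F}$, $\|\pi(\sum_{\xi\in F}t_\xi 1_{A'_\xi})\|=\sup_{\xi\in F}|t_\xi|$. The lower bound holds because on the clopen set $[A_\xi]$ the function equals $t_\xi$. The upper bound is where the covering invariant pays off: $\|\pi(f)\|=\limsup_{n}|f(n)|$ for $f\in\ell_\infty$, and $|\sum_{\xi\in F}t_\xi 1_{A'_\xi}(n)|\le\sup_\xi|t_\xi|$ for every $n$ that lies in at most one $A'_\xi$ — but $n$ could lie in many. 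Here we use that only finitely many issues arise: actually $\sum_\xi t_\xi 1_{A'_\xi}(n)=t_\xi$ where $\xi$ is the unique index with $n\in A'_\xi$ (the $A'_\xi$ are pairwise \emph{disjoint}), or $0$ if $n$ is in none; in either case the value has modulus $\le\sup_\xi|t_\xi|$. So $\|f\|_\infty\le\sup_\xi|t_\xi|$, hence the quotient norm is too.

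\textbf{Main obstacle.} The delicate point is the recursive construction: we must maintain simultaneously that the family stays almost disjoint and that \emph{every} countably infinite subfamily is a mod-finite cover of $\N$, and verify this invariant is preserved when a new $A_\xi$ (which is individually far from cofinite) is adjoined — the key realization being that "every countably infinite subfamily covers" is automatically inherited as long as we never destroy old covers, which a mod-finite-disjoint new set cannot do, and as long as each new $A_\xi$ is itself infinite so that countable families \emph{through} $\xi$ still have infinitely many old members doing the covering. One must also handle successor versus limit stages $\xi$ cleanly and make sure the "one point per $D_n$" choice indeed yields $A_\xi$ almost disjoint from \emph{all} earlier $A_\eta$, not just the cofinal ones $A_{\eta_n}$; this follows since an arbitrary $A_\eta$ with $\eta<\xi$ either equals some $A_{\eta_n}$ (meets $A_\xi$ in $\le 1$ point in that block, finitely many blocks) or is almost disjoint from each $A_{\eta_n}\supseteq^* D_n$, hence meets $\bigcup D_n$ in a finite set. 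Once the family is in hand, everything about $S$ is routine, using $\|\pi(f)\|=\limsup|f(n)|$ and disjointification of countable almost disjoint families mod finite.
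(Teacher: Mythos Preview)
Your plan has a genuine gap, and in fact breaks down at several points.

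\textbf{The map $S$ is not well-defined or linear.} You define $S((t_\xi))=\pi\big(\sum_{\xi\in F}t_\xi 1_{A'_\xi}\big)$ where $F=\supp(t)$ and $\{A'_\xi\}_{\xi\in F}$ is a disjointification of $\{A_\xi\}_{\xi\in F}$. You assert ``the class is independent of the disjointification'', but this is false for general $(t_\xi)\in\ell_\infty^c(\omega_1)$. Given two disjointifications $\{A'_\xi\},\{A''_\xi\}$, the difference at $n$ is $t_{\xi'(n)}-t_{\xi''(n)}$, where $\xi'(n),\xi''(n)$ are the (at most one) indices with $n\in A'_{\xi'(n)}$, $n\in A''_{\xi''(n)}$. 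Whenever these differ, $n\in A_{\xi'(n)}\cap A_{\xi''(n)}$, and the set of such $n$ lies in $\bigcup_{\xi\neq\eta}(A_\xi\cap A_\eta)$ --- a countable union of finite sets that is typically infinite, on which the difference can be $\pm 2\|t\|$. (Take $t_\xi=(-1)^\xi$ on $F=\omega$.) The same issue kills linearity: the disjointification depends on the support, so $S(t)+S(s)$ and $S(t+s)$ are computed with incompatible disjointifications.

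\textbf{The recursive step is wrong.} In your almost-disjointness check for $A_\xi=\{d_n:n\in\N\}$ against an earlier $A_\eta$ not in the cofinal sequence, you write that $A_\eta$ ``is almost disjoint from each $A_{\eta_n}\supseteq^* D_n$, hence meets $\bigcup D_n$ in a finite set.'' But $\bigcup_n D_n=^*\N$, so $A_\eta\cap\bigcup_n D_n=^*A_\eta$ is infinite. What is true is that each $A_\eta\cap D_n$ is finite; a countable union of finite sets need not be finite. (This is repairable by diagonalizing against an enumeration of all $\eta<\xi$, but as written it is incorrect.)

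\textbf{The covering invariant is the wrong target.} You yourself observe near the end that the ``covering invariant'' is not used in the norm computation: the upper bound is automatic from disjointness of the $A'_\xi$. So after all the work, the invariant plays no role in your argument. Compare Proposition~3.3 of the paper: what an isometric embedding \emph{forces} on the family is a \emph{separation} condition (for every countable $F$ there is $V_F\subseteq\N$ with $A_\xi\subseteq^*V_F$ for $\xi\in F$ and $A_\xi\cap V_F=^*\emptyset$ for $\xi\notin F$), not that countable subfamilies cover $\N$.

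\textbf{How the paper does it.} The paper takes a completely different route: it starts from a \emph{coherent family} of injections $s_\alpha:\omega\cdot\alpha\to\omega$ (with $s_\alpha|(\omega\cdot\beta)=^*s_\beta$), puts $A_\xi=s_{\xi+1}\big[[\omega\xi,\omega\xi+\omega)\big]$, and then defines a Boolean monomorphism $h$ from the algebra of countable/cocountable unions of the intervals $[\omega\xi,\omega(\xi+1))$ into $\wp(\N)/Fin$ by $h(U_X)=[s_{\alpha(X)}[U_X]]$. Coherence is exactly what makes $h$ well-defined and multiplicative, and any Boolean monomorphism lifts canonically to a linear isometry $C(K)\to C(\N^*)$; precomposing with the obvious isometry $\ell_\infty^c(\omega_1)\hookrightarrow C(K)$ gives $S$. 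This bypasses entirely the disjointification bookkeeping that sinks your approach: linearity and well-definedness come for free from the Boolean/Stone machinery, and the crucial combinatorial input (coherence) is a known ZFC object.
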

  
\begin{definition}\label{def-coherent}
Let \(\{s_{\alpha} : \alpha < \omega_1\}\) be a family of functions. 
We call it \emph{a coherent family} if for each \(\alpha < \omega_1\):
\begin{enumerate}
    \item[(i)] \(s_{\alpha}: \omega \cdot \alpha \to \omega\) is injective,
    \item[(ii)] \(s_{\alpha}|{\omega \cdot \beta} =^* s_{\beta}\) for every \(\beta < \alpha\).
\end{enumerate}
\end{definition}
A construction of such a family is given for instance
in \cite[Chapter II Theorem 5.9]{kunen} (actually, our family is a subfamily
of the family constructed there, consisting only of functions enumerated with limit ordinals). 
The following lemma ensures that having such functions 
we are able to find an almost disjoint family with desired properties. 

\begin{lemma}\label{coherent-embedding}
Suppose that \(\{s_{\alpha} : \alpha < \omega_1\}\) is a coherent family.
Let
\[
A_{\xi} \deff s_{\xi + 1}\big[[\omega \cdot \xi, \omega \cdot \xi +\omega)\big].
\] 
Then \(\{A_{\xi} : \xi < \omega_1\}\) is an
almost disjoint family and there is an isometric embedding
\(S: \ell_{\infty}^{c}(\omega_1) \to C(\N^*)\) such that $S(1_{\{\xi\}})= 1_{[A_{\xi}]}$.
\end{lemma}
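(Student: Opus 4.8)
The plan is to construct $S$ directly from the coherent family $\{s_\alpha:\alpha<\omega_1\}$, using each $s_\alpha$ to ``push a countably supported vector down to $\N$''. Since any $x\in\ell_\infty^c(\omega_1)$ has countable support, $\supp(x)\subseteq\alpha$ for some $\alpha<\omega_1$; for such $\alpha$ I would define $f_x^\alpha\in\ell_\infty$ by $f_x^\alpha(n)=x_\xi$ whenever $n=s_\alpha(\omega\cdot\xi+k)$ for the (unique, by injectivity of $s_\alpha$ and ordinal division) $\xi<\alpha$ and $k<\omega$, and $f_x^\alpha(n)=0$ when $n\notin\ran(s_\alpha)$; then put $S(x)\deff[f_x^\alpha]\in\ell_\infty/c_0\equiv C(\N^*)$. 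It is worth stressing first why the obvious candidate $S(x)=\sum_\xi x_\xi 1_{[A_\xi]}$ is hopeless: a countable sum of scalar multiples of pairwise disjoint clopen indicators generally does not converge uniformly in $C(\N^*)$, so the coherent spreading is exactly what provides a bounded, everywhere-defined representative.

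\emph{Almost disjointness of the $A_\xi$.} Fix $\xi<\eta<\omega_1$. Applying coherence (ii) with $\eta+1$ in the role of $\alpha$ and $\xi+1$ in the role of $\beta$ gives $s_{\eta+1}|_{\omega\cdot(\xi+1)}=^* s_{\xi+1}$; restricting to $[\omega\cdot\xi,\omega\cdot\xi+\omega)\subseteq\omega\cdot(\xi+1)$ and taking images, $s_{\eta+1}\big[[\omega\cdot\xi,\omega\cdot\xi+\omega)\big]=^* A_\xi$. On the other hand $[\omega\cdot\xi,\omega\cdot\xi+\omega)$ and $[\omega\cdot\eta,\omega\cdot\eta+\omega)$ are disjoint subsets of $\omega\cdot(\eta+1)$, on which $s_{\eta+1}$ is injective, so $s_{\eta+1}\big[[\omega\cdot\xi,\omega\cdot\xi+\omega)\big]\cap A_\eta=\emptyset$; combining, $A_\xi\cap A_\eta$ is finite. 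Each $A_\xi$ is infinite, being the image of an infinite set under an injection, so $\{A_\xi:\xi<\omega_1\}$ is a genuine almost disjoint family.

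\emph{$S$ is a well-defined linear isometry with the prescribed values.} Independence of $\alpha$: if $\supp(x)\subseteq\alpha<\beta$ then $s_\beta|_{\omega\cdot\alpha}=^* s_\alpha$, and away from the finitely many images under $s_\alpha$ or $s_\beta$ of the exceptional finite set of this $=^*$ relation both functions read off the same coordinate of $x$ (using $\supp(x)\subseteq\alpha$ to kill the contribution of ordinals $\ge\alpha$); hence $f_x^\alpha=^* f_x^\beta$ and $S(x)=[f_x^\alpha]$ is independent of $\alpha$. Linearity is immediate on picking a single $\alpha$ with $\supp(x)\cup\supp(y)\subseteq\alpha$: then $f_{ax+by}^\alpha=af_x^\alpha+bf_y^\alpha$ literally, not merely mod $c_0$. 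For the isometry, recall that the quotient norm of $[g]$ in $\ell_\infty/c_0$ equals $\inf_{h\in c_0}\|g-h\|_\infty=\limsup_n|g(n)|$; all values of $f_x^\alpha$ lie in $\{0\}\cup\{x_\xi:\xi<\alpha\}$, so $\limsup_n|f_x^\alpha(n)|\le\|x\|_\infty$, while for each $\xi\in\supp(x)$ the set $s_\alpha\big[[\omega\cdot\xi,\omega\cdot\xi+\omega)\big]$ is infinite and $f_x^\alpha\equiv x_\xi$ there, giving $\limsup_n|f_x^\alpha(n)|\ge|x_\xi|$ for all such $\xi$, hence $\ge\|x\|_\infty$; thus $\|S(x)\|=\|x\|_\infty$. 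Finally, taking $\alpha=\xi+1$ one has $f_{1_{\{\xi\}}}^{\xi+1}=1_{A_\xi}$ exactly, since $f_{1_{\{\xi\}}}^{\xi+1}(n)=1$ precisely when $n\in s_{\xi+1}\big[[\omega\cdot\xi,\omega\cdot\xi+\omega)\big]=A_\xi$ and $0$ otherwise; so $S(1_{\{\xi\}})=[1_{A_\xi}]=1_{[A_\xi]}$ under the standard identification.

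\emph{The main obstacle.} I expect the real content to be the very definition of $S$ together with its independence of $\alpha$: one must recognise that coherence is exactly the property making the ``push-down'' class $[f_x^\alpha]$ canonical, and must treat carefully the integers lying in $\ran(s_\beta)\setminus\ran(s_\alpha)$ (and conversely), which is precisely where the hypothesis $\supp(x)\subseteq\alpha$ and the finiteness built into $=^*$ enter. Everything else — the almost disjointness of the $A_\xi$, linearity of $S$, and the $\limsup$ computation of its norm — is then routine bookkeeping.
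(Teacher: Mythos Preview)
Your proof is correct, and it takes a more direct route than the paper's. The paper proceeds through Boolean and Stone machinery: it defines a Boolean algebra $\mathcal A$ of ``block'' subsets $U_X=\bigcup_{\xi\in X}I_\xi$ of $\omega_1$ (for $X$ countable or cocountable), constructs a Boolean monomorphism $h:\mathcal A\to\wp(\N)/Fin$ by $h(U_X)=[s_{\alpha(X)}[U_X]]$, and then composes the induced isometry $T_h:C(K_{\mathcal A})\to C(\N^*)$ with the natural isometric embedding $R:\ell_\infty^c(\omega_1)\to C(K_{\mathcal A})$; almost disjointness of the $A_\xi$ is even deduced \emph{a posteriori} from the isometry. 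Your approach bypasses Stone duality entirely by writing down an explicit lift $f_x^\alpha\in\ell_\infty$ for each $x$, which for simple $x$ is exactly a representative of the paper's $T_h(R(x))$; the coherence argument you use for well-definedness is the analytic counterpart of the paper's verification that $h$ respects the Boolean operations. The paper's route is more structural and makes clear that the embedding factors through a Boolean embedding (useful elsewhere in Section~3), while yours is more elementary and self-contained, avoiding any appeal to dense subspaces of simple functions or to $C(K)$ for an auxiliary Stone space.
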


\begin{proof}
For \(\xi < \omega_1\) let us denote \(\xi^+ = \xi + 1\) and \(I_{\xi} = [\omega \cdot \xi, \omega \cdot \xi^+)\). 
Under this notation, \(A_\xi = s_{\xi^+}[I_\xi]\).  Since \(\{s_{\alpha} : \alpha < \omega_1\}\) is a coherent family, for \(\xi > \eta\) we have
\[
A_{\xi} \cap A_{\eta} = s_{\xi^+}[I_\xi] \cap s_{\eta^+} [I_\eta] =^* s_{\xi^+}[I_\xi] \cap s_{\xi^+}[I_\eta] = s_{\xi^+}[I_\xi \cap I_\eta] = s_{\xi^+}[\emptyset] = \emptyset,
\]
so the family \(\{A_\xi : \xi < \omega_1\}\) is indeed almost disjoint. Let $\A$ be the Boolean algebra of all countable and cocountable subsets of $\omega_1$ and let
$$U_X=\bigcup_{\xi\in X}I_\xi,$$
for $X\subseteq\omega_1$. Now note that to prove the lemma it is enough to construct a Boolean monomorphism $h:\A\rightarrow\wp(\N)/Fin$
such that $h(\{\xi\})=[A_\xi]$ for every $\xi<\omega_1$. This is because such a monomorphism
induces an isometry $T_h: C(K)\rightarrow C(\N^*)$
satisfying $T_h(1_{X})=1_{h(X)}$ for $X\subseteq\omega_1$ countable or cocountable, where $K$ is the Stone space of $\A$. The isometry is determined on the dense subspace of continuous simple functions on $K$. Note that, under the standard identification of elements of \(\A\) with clopen subsets of \(K\), \(\ell_{\infty}^c(\omega_1)\) is the closed subspace of \(C(K)\) generated by the characteristic functions \(1_X\) for \(X\) countable. Since 
$$
T_h(1_{\{\xi\}})=1_{h(\{\xi\})}=1_{[A_\xi]}
$$
for every $\xi<\omega_1$, $S=T_h|\ell_\infty^c(\omega_1)$ would work for the second part of the lemma.

From now on we focus on constructing $h$ as above. For a countable $X\subseteq\omega_1$ define $\alpha(X)=\min\{\alpha<\omega_1: U_X\subseteq \omega\cdot\alpha\}$.
Define $h: \A\rightarrow \wp(\N)/Fin$ by
\[
h(X)=
\begin{cases}
[s_{\alpha(X)}[U_X]] & \hbox{\rm if}\ X\ \hbox{\rm is  countable}, \\
[\N\setminus s_{\alpha(\omega_1\setminus X)}[U_{\omega_1\setminus X}]] & \hbox{\rm if}\ X\ \hbox{\rm is  cocountable}.
\end{cases}
\]
The definition clearly implies that \(h( - A) =  - h(A)\) for any \(A \in \A\). For countable $X, Y\subseteq\omega_1$ and a Boolean operation $\diamond \in \{\vee, \wedge, -\}$ we have
$$
s_{\alpha(X)}[U_X]\diamond s_{\alpha(Y)}[U_Y]=^*s_{\alpha(X \cup Y)}[U_X]\diamond s_{\alpha(X \cup Y)}[U_Y] =$$
$$ =s_{\alpha(X \cup Y)}[U_X\diamond U_Y]=s_{\alpha(X \cup Y)}[U_{X\diamond Y}]=^*s_{\alpha(X\diamond Y)}[U_{X\diamond Y}]$$
since $s_\alpha$s are injective and coherent by Definition \ref{def-coherent}. So $h(A\wedge B)=h(A)\wedge h(B)$, $h(A\vee B)=h(A)\vee h(B)$
and $h(A\setminus B)=h(A)-h(B)$ for countable $A, B\in \A$. From that, using the operation of complement, we can conclude that the same holds for arbitrary \(A, B \in \A\) and that $h$ defined above is a Boolean homomorphism. Moreover,
if $X\not=\emptyset$, 
then $U_{X}$ is infinite, so $h(X) = [s_{\alpha(X)}[U_{X}]] \ne \emptyset$. Thus, $h$ is a monomorphism.
\end{proof}

\begin{proposition}\label{iso-separation} 
Let \(\{A_{\xi} : \xi < \kappa\}\) be an almost disjoint family. 
If there exists an isometric embedding
\(S: \ell_{\infty}^{c}(\kappa) \to C(\N^*)\) 
such that $S(1_{\{\xi\}}) =  1_{[A_\xi]}$, 
then for every $F \in [\kappa]^{\leq \omega}$ there is \(V_{F} \subseteq \N\) 
such that \(A_{\xi} \subseteq^* V_{F}\) for 
\(\xi \in F\) and \(A_{\xi} \cap V_{F} =^* \emptyset\) for \(\xi \notin F\). 
\end{proposition}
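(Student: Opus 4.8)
The plan is to apply the isometry $S$ to a small collection of countably supported vectors built from $1_F$ and the unit vectors $1_{\{\xi\}}$, and to read off from the resulting continuous functions on $\N^*$ that $g:=S(1_F)$ is forced to be identically $1$ on each clopen set $[A_\xi]$ with $\xi\in F$ and identically $0$ on each $[A_\xi]$ with $\xi\notin F$. Granting this, the separating set will simply be a level set of any $\ell_\infty$-lift of $g$.

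First I would dispose of the trivial case $F=\emptyset$ by taking $V_F=\emptyset$, and otherwise note that $1_F\in\ell_\infty^c(\kappa)$ with $\|1_F\|=1$, so $g\in C(\N^*)$ with $\|g\|=1$, and by linearity $S(1_F-2\cdot 1_{\{\xi\}})=g-2\cdot 1_{[A_\xi]}$ and $S(1_F\pm 2\cdot 1_{\{\xi\}})=g\pm 2\cdot 1_{[A_\xi]}$. The key step is an elementary sup-norm computation. For $\xi\in F$, the vector $1_F-2\cdot 1_{\{\xi\}}$ has value $-1$ at $\xi$, value $1$ on $F\setminus\{\xi\}$ and value $0$ off $F$, hence norm $1$; since $S$ is isometric, $\|g-2\cdot 1_{[A_\xi]}\|=1$, so for every $p\in[A_\xi]$ we get $|g(p)-2|\le 1$, i.e. $g(p)\ge 1$, while $\|g\|=1$ gives $g(p)\le 1$; thus $g\equiv 1$ on $[A_\xi]$. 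For $\xi\notin F$, both $1_F+2\cdot 1_{\{\xi\}}$ and $1_F-2\cdot 1_{\{\xi\}}$ have value $\pm 2$ at $\xi$, value $1$ on $F$, and value $0$ elsewhere, hence norm $2$, so $\|g\pm 2\cdot 1_{[A_\xi]}\|=2$; for $p\in[A_\xi]$ this yields $|g(p)+2|\le 2$ and $|g(p)-2|\le 2$, i.e. $g(p)\le 0$ and $g(p)\ge 0$, so $g\equiv 0$ on $[A_\xi]$.

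Finally I would fix $f\in\ell_\infty$ whose class corresponds to $g$ under $\ell_\infty/c_0\equiv C(\N^*)$ and put $V_F=\{n\in\N:f(n)>1/2\}$. By the standard description of $C(\N^*)$, the value of $g$ at a point $p\in[A_\xi]$ is the limit of $f$ along $p$, so $g\equiv 1$ on $[A_\xi]$ is equivalent to $(f(n))_{n\in A_\xi}\to 1$ (otherwise an infinite subset of $A_\xi$ on which $|f-1|$ is bounded away from $0$ would be contained in some $p\in[A_\xi]$ with $g(p)\neq 1$); hence $\{n\in A_\xi:f(n)\le 1/2\}$ is finite and $A_\xi\subseteq^* V_F$ for $\xi\in F$. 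Symmetrically $g\equiv 0$ on $[A_\xi]$ forces $(f(n))_{n\in A_\xi}\to 0$, so $\{n\in A_\xi:f(n)>1/2\}$ is finite and $A_\xi\cap V_F=^*\emptyset$ for $\xi\notin F$, which is what we want. I do not expect a real obstacle here: the whole argument rests on the two one-line $\ell_\infty$-norm computations above together with the routine dictionary between ``$g$ is identically $1$ (resp. $0$) on the clopen set $[A_\xi]$'' and the asymptotic behaviour of a lift of $g$ along $A_\xi$; if anything needs care it is only to keep track that we use solely that $S$ does not increase norms, isometry being more than enough.
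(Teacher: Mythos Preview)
Your proof is correct and follows essentially the same approach as the paper: both arguments show, via norm computations with test vectors combining $1_F$ and $1_{\{\xi\}}$, that $g=S(1_F)$ is identically $1$ on $[A_\xi]$ for $\xi\in F$ and $\le 0$ (you show $=0$) on $[A_\xi]$ for $\xi\notin F$, and then extract a separating set from this. The only cosmetic differences are the particular test vectors chosen (the paper uses $2\cdot 1_{\{\xi\}}-1_F$ and $1_{\{\xi\}}+1_F$, both of norm $1$, whereas you use $1_F-2\cdot 1_{\{\xi\}}$ and $1_F\pm 2\cdot 1_{\{\xi\}}$), and the way $V_F$ is produced: the paper covers the compact set $g^{-1}(\{1\})$ by finitely many clopen sets inside $\{g>0\}$, while you take a level set of a lift of $g$---both are standard and yield the same conclusion.
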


\begin{proof}
Fix \(F \in [\kappa]^{\leq \omega}\).
Suppose that \(S( 1_{F})(x) < 1\) for some \(x \in [A_{\xi}], \, \xi \in F\). Then
\[
1 \geq S(2\cdot  1_{\{\xi\}} -  1_{F})(x) = 2\cdot  1_{[A_{\xi}]}(x) - S( 1_{F})(x) > 2-1 = 1,
\]
a contradiction. Similarly, if \(S (1_{F})(x) > 0\) for some \(x \in [A_{\xi}], \, \xi \notin F\), then
\[
1 \geq S( 1_{\{\xi\}} +  1_{F})(x) = 1_{[A_{\xi}]}(x) + S (1_{F})(x) > 1,
\]
which again gives a contradiction. We conclude that 
\([A_{\xi}] \subseteq \{x \in \N^* : S( 1_{F})(x) = 1\}\) 
for \(\xi \in F\) and
\([A_{\xi}] \cap \{x \in \N^* : S( 1_{F})(x) > 0\}= \emptyset\) 
for \(\xi \notin F\). 
By the compactness of $S(1_{F})^{-1}[\{1\}]$, 
there is a finite family \(\{U_i\}_{i=1}^n \subseteq \mathcal{P}(\N)\)
such that 
$$\{x \in \N^* : S( 1_{F})(x) = 1\} 
\subseteq \bigcup_{i=1}^n [U_i] 
\subseteq \{x \in \N^* : S(1_{F})(x) > 0\}.
$$
The set \(V_{F} = \bigcup_{i=1}^n U_i\) has the desired property.
\end{proof}
    
\begin{proposition}\label{b=omega1}
There is an almost disjoint family $\A=\{A_\xi: \xi<\mathfrak{b}\}$ such that
there is no isometric embedding  $S:\ell_\infty^{c}(\mathfrak{b})\rightarrow C(\N^*)$  satisfying
$$S(1_{\{\xi\}})=1_{[A_\xi]}$$
for every $\xi<\mathfrak{b}$.
\end{proposition}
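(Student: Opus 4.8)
The plan is to build an almost disjoint family indexed by $\mathfrak b$ that, by the virtue of an unbounded $\subseteq^*$-increasing sequence of sets, cannot be "separated on countable slices" in the sense of Proposition \ref{iso-separation}; the nonexistence of the isometric embedding will then be immediate from that proposition. Concretely, fix a family of functions $\{f_\alpha:\alpha<\mathfrak b\}\subseteq\N^\N$ which is strictly increasing in the eventual domination order and unbounded, and which we may also assume consists of strictly increasing functions. Set $A_\alpha=\{\langle n,k\rangle: k<f_\alpha(n)\}\subseteq\N\times\N\cong\N$ (the region under the graph of $f_\alpha$), or a slight variant thereof that is genuinely almost disjoint; the point is that $\alpha<\beta$ forces $A_\alpha\subseteq^* A_\beta$ and that the family has no $\subseteq^*$-upper bound among subsets of $\N\times\N$ whose "vertical sections" are finite. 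To make the $A_\alpha$ pairwise almost disjoint rather than $\subseteq^*$-increasing, I would instead take $A_\alpha$ to be the set of columns-with-heights strictly between $f_\alpha$ and the next function, i.e. work with the differences; this is a standard device and I expect it to be routine, so I will not grind through it here.

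Next I would assume toward a contradiction that an isometric embedding $S:\ell_\infty^c(\mathfrak b)\to C(\N^*)$ with $S(1_{\{\xi\}})=1_{[A_\xi]}$ exists. Apply Proposition \ref{iso-separation}: for every countable $F\subseteq\mathfrak b$ there is $V_F\subseteq\N$ with $A_\xi\subseteq^* V_F$ for $\xi\in F$ and $A_\xi\cap V_F=^*\emptyset$ for $\xi\notin F$. Take $F=\{\alpha_n:n<\omega\}$ to be any strictly increasing $\omega$-sequence of indices below $\mathfrak b$, cofinal in its supremum $\delta$. From $A_{\alpha_n}\subseteq^* V_F$ for all $n$ and the way the $A$'s encode the functions $f_{\alpha_n}$, I would extract from $V_F$ a function $g\in\N^\N$ (roughly: $g(n)$ = the height of the top nonempty part of column $n$ in $V_F$, suitably truncated to keep it finite-valued) such that $f_{\alpha_n}\le^* g$ for every $n$. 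Since $\{f_{\alpha_n}:n<\omega\}$ was chosen cofinal in an initial segment of an unbounded family — here is where I need to be a little careful — I want to choose $F$ so that $g$ would have to dominate the whole family $\{f_\alpha:\alpha<\mathfrak b\}$, contradicting unboundedness. The clean way: first note that $g$ dominates $f_{\alpha_n}$ for all $n$; now use the separation clause $A_\beta\cap V_F=^*\emptyset$ for $\beta\notin F$ together with $A_\beta\supseteq^*$ (or $\supseteq$-almost) the columns under $f_\beta$ minus the columns under $f_{\alpha_n}$ for the least $\alpha_n>\beta$...

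The main obstacle — and the step I'd spend the most care on — is exactly this last coupling: arranging the combinatorics of the $A_\alpha$'s so that one single $V_F$ for one single countable $F$ already forces an impossible domination of the whole $\mathfrak b$-sized family. The slick route is to choose $F$ to be an $\omega$-sequence whose associated $f_{\alpha_n}$'s are cofinal (in $\le^*$) in the set $\{f_\alpha:\alpha<\delta\}$ for $\delta=\sup F$, and then separately invoke that $\{f_\alpha:\alpha<\mathfrak b\}$ is $\le^*$-unbounded \emph{and} $\le^*$-increasing, so that no $\delta<\mathfrak b$ is cofinal; hence we do \emph{not} get a contradiction from one $F$ alone. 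So instead I would run the argument differently: for \emph{each} countable $F$ with $\sup F=\delta$, Proposition \ref{iso-separation} gives $V_F$ which bounds $\{f_\alpha:\alpha\le$ ``$F$''$\}$; letting $F$ range over a $\subseteq$-increasing $\omega_1$-chain (or rather a chain of length $\mathrm{cf}(\mathfrak b)$) whose union is all of $\mathfrak b$ is not legal since $F$ must stay countable. The correct fix, which is what I'd ultimately write, is: pick $F$ cofinal in $\mathfrak b$ only when $\mathrm{cf}(\mathfrak b)=\omega$ — but $\mathfrak b$ has uncountable cofinality, so this fails too. Therefore the real argument must use the full strength of Definition \ref{def-coherent}-style coherence rather than a single separation: build the $A_\alpha$'s from a coherent, eventually-dominating scaffolding so that \emph{any} countable $F$ together with its $V_F$ already pins down, via coherence, a function eventually dominating every $f_\alpha$ with $\alpha$ below $\sup F$ \emph{and} — by the ``$A_\beta\cap V_F=^*\emptyset$ for $\beta\notin F$'' clause applied to $\beta$ just above $\sup F$ — a function \emph{not} dominated there, yielding the contradiction locally. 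I expect the authors' Proposition \ref{b=omega1} to be proved by precisely such a local coherence-plus-separation clash, and that is the delicate point I would flag and work out in full.
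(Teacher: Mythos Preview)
Your proposal has a genuine gap, and the paper's proof is far simpler than the route you are attempting.

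The paper's argument is one line: quote van Douwen's characterization of $\mathfrak b$ (Theorem~3.3, case $\mathfrak b_5$, in \cite{douwen}) to obtain an almost disjoint family $\mathcal A=\mathcal B\cup\mathcal C$ with $|\mathcal C|=\omega$, $|\mathcal B|=\mathfrak b$, such that $\mathcal C$ and $\mathcal B$ cannot be separated. Enumerate $\mathcal A$ as $\{A_\xi:\xi<\mathfrak b\}$, let $F$ be the countable set of indices coming from $\mathcal C$, and apply Proposition~\ref{iso-separation} once: the resulting $V_F$ would separate $\mathcal C$ from $\mathcal B$, contradiction.

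Your construction is heading toward the \emph{proof} of that van Douwen result, but you are missing the essential ingredient: the countable family $F$ one feeds into Proposition~\ref{iso-separation} is not a countable set of ordinal indices $\alpha<\mathfrak b$, but an auxiliary countable almost disjoint family adjoined to the $A_\alpha$'s. Concretely, with an unbounded $\le^*$-increasing sequence $(f_\alpha)_{\alpha<\mathfrak b}$ of strictly increasing functions, take $A_\alpha$ to be the \emph{graph} of $f_\alpha$ in $\N\times\N$ and add the columns $C_n=\{n\}\times\N$. Then $\{C_n:n\in\N\}\cup\{A_\alpha:\alpha<\mathfrak b\}$ is almost disjoint, and a set $V$ with $C_n\subseteq^*V$ and $A_\alpha\cap V=^*\emptyset$ for all $\alpha$ yields (via $g(n)=1+\max\{k:(n,k)\notin V\}$) a single function $g$ eventually dominating every $f_\alpha$ --- impossible.

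By contrast, you look for $F$ among countable subsets of $\mathfrak b$ itself. As you correctly diagnose, no such $F$ can be cofinal in $\mathfrak b$ (since $\mathrm{cf}(\mathfrak b)>\omega$), so the separator $V_F$ only controls an initial segment and you never reach a global contradiction. The coherence machinery you then speculate about is not needed at all; the whole difficulty dissolves once the countable family $F$ is taken to be the columns (equivalently, once you invoke the ready-made inseparable family from \cite{douwen}).
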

\begin{proof}
By Theorem 3.3 from \cite{douwen} (case ${\mathfrak b}_5$) there exists an almost 
disjoint family  \(\mathcal{A} = \mathcal{B} \cup \mathcal{C} \subseteq [\omega]^{\omega}\) 
such that \(|\mathcal{C}| = \omega\), \(|\mathcal{B} | = \mathfrak{b}\), 
and families \(\mathcal{B} \) and \(\mathcal{C}\) cannot be separated. 
Then by Proposition \ref{iso-separation} for any enumeration \(\{A_{\xi} : \xi < \mathfrak{b}\}\) of 
\(\mathcal{A}\)  there is no isometric embedding 
$S:\ell_\infty^{c}(\mathfrak{b})\rightarrow C(\N^*)$ satisfying
$$S(1_{\{\xi\}})=1_{[A_\xi]}$$
for every $\xi<\mathfrak{b}$.
\end{proof}

\begin{lemma}\label{nice-ext}
Suppose that $A, B$ are countably infinite sets, $C\subseteq \N$ is infinite, and
\begin{enumerate}
\item $A\subseteq B$, $B\setminus A$ is infinite,
\item $f: A\rightarrow C$ is an injection,
\item $g: B\rightarrow \N$ is an injection, 
\item $\ran(g)\subseteq^*C$, 
\item $C\setminus \ran(g)$ is infinite
\item $f=^*g|A$
\item $F\subseteq C$ is finite.
\end{enumerate}
Then there is an injection $h:B\rightarrow C$ such that $f\subseteq h$, $h=^*g$ and $F\subseteq \ran(h)$.
\end{lemma}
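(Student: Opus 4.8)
The plan is to build $h$ by a direct finite modification of $g$, correcting two kinds of defects: the values of $g$ that fall outside $C$, and the need to hit the finite set $F$. First I would record the finitely many ``bad'' points. Let $D = \{b \in B : g(b) \notin C\}$; this is finite by hypothesis (4). Also let $E = A \setminus \{a \in A : f(a) = g(a)\}$, the finite set where $f$ and $g|A$ disagree (finite by (6)); note that for $a \in A \setminus E$ we have $g(a) = f(a) \in C$, so $A \cap D \subseteq E$. I want $h$ to agree with $f$ on all of $A$, so on $A$ I am forced to set $h|A = f$; the task is to extend this to $B \setminus A$ so that the result is injective into $C$, almost equal to $g$, and has $F$ in its range.

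Next I would identify the ``room'' available in $C$. The set $C \setminus \ran(g)$ is infinite by (5); throwing away the finitely many values $f[E]$ (which could coincide with values $g$ takes elsewhere) and the finite set $F$ still leaves an infinite subset of $C$ disjoint from $\ran(g)$, hence an infinite subset $C' \subseteq C$ with $C' \cap \ran(g) = \emptyset$ and $C' \cap f[E] = \emptyset$. The construction of $h$ then goes as follows. On $A$, put $h = f$. The points of $B \setminus A$ that still need (re)definition are: the finitely many $b \in D \setminus A$ (where $g(b) \notin C$), plus finitely many points we will choose to carry the values $F$, plus any $b \in B \setminus A$ with $g(b) \in f[A]$ (this set is finite since $f|A =^* g|A$ forces all but finitely many values of $g$ on $A$ to equal the corresponding values of $f$, and the remaining finitely many elements of $f[A]$ that are not already in $g[A]$ can be preimaged only finitely often by the injection $g$). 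On all the remaining — cofinitely many — points $b \in B \setminus A$, set $h(b) = g(b)$; for those, $g(b) \in C$, $g(b) \notin f[A]$, and distinctness is inherited from injectivity of $g$. For the finitely many points needing new values: since $B \setminus A$ is infinite (hypothesis (1)) we may pick enough distinct points on which to place, injectively, the elements of $F \setminus \ran(h\text{ so far})$ and fresh values from $C'$ to repair the $b \in D$; because $C'$ is infinite we never run out, and choosing values from $C' \cup F$ keeps everything inside $C$, disjoint from the already-used $g$-values and from $f[A]$, hence injective overall. Finally $h =^* g$ because $h$ and $g$ differ only on the finite set $D \cup E \cup (\text{the finitely many repair points}) \cup \{b : g(b) \in f[A]\}$.

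The main obstacle — really a bookkeeping point rather than a genuine difficulty — is keeping the injectivity invariant while simultaneously (a) forcing $h|A = f$, (b) staying inside $C$, and (c) inserting $F$ into the range, since these three demands all compete for the same finitely many ``flexible'' coordinates and for values in $C$. The clean way to manage this is to decide the values of $h$ in a fixed order: first commit to $h|A = f$; then list the finitely many troublesome $b \in B \setminus A$; then process them one at a time, each time picking a value in $(C' \cup (F \setminus \ran h)) \setminus \ran h$, which is possible because $C'$ is infinite while only finitely many values have been used. Since $B \setminus A$ is infinite, there is always an unused point of $B \setminus A$ left to absorb each remaining element of $F$. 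One should double-check at the end that every $b$ with $g(b) \notin C$ or with $g(b) \in \ran(h|A) = f[A]$ has indeed been reassigned — both of those sets are finite, so they are among the points we explicitly processed — which gives $h =^* g$ and completes the verification.
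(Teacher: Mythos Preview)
Your proposal is correct and follows essentially the same route as the paper: both arguments isolate the finite set of points in $B\setminus A$ where $g$ either lands outside $C$ or collides with $f[A]$, enlarge this finite set to make room for the values in $F$, keep $h=f$ on $A$ and $h=g$ on the remaining cofinite part of $B\setminus A$, and then assign fresh values from the infinite reservoir $C\setminus\ran(g)$ (together with the missing elements of $F$) on that finite exceptional set. The paper's write-up is a bit more streamlined---it fixes the enlarged bad set $D_3\supseteq D_1\cup D_2$ with $|D_3|\ge|F|$ up front and then defines $h''$ on $D_3$ in one step---but the content is the same.
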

\begin{proof}
Let $D_1=\{x\in B\setminus A: g(x)\in \N\setminus C\}$, so by (2)  $\ran(f\cup(g|E))\subseteq C$ 
for any $E\subseteq B\setminus A$ disjoint from $D_1$. By (3) and (4) $D_1$ is finite.
 
Let $D_2=\{x\in B\setminus A: \exists y\in A \  g(x)=f(y)\}$, so by (2) and (3)  $f\cup(g|E)$ is an injection
for any  $E\subseteq B\setminus A$ disjoint from $D_2$. By (6) $D_2$ is finite.
 
Let $D_3\supseteq D_1\cup D_2$ be any subset of $B\setminus A$ of finite cardinality not smaller than
$|F|$. It exists by (1). And let $E=(B\setminus A)\setminus D_3$. Finally let $h'=f\cup (g|E)$.
 
By the above remarks $h': A\cup E\rightarrow C$ is injective and $h'=^*(g|(A\cup E))$ by (6).
Now define $h'':D_3\rightarrow C\setminus\ran(h')$ by putting any distinct values on
distinct elements of $D_3$ as long as $\ran(h'')$ includes $F\setminus \ran(h')$. This can be done as $|D_3|\geq |F|$
and since $C\setminus \ran(h')$ is infinite by (5) and (6).
Now $h=h'\cup h''$ works.
\end{proof}

\begin{lemma}\label{iso-chain}
Let \(\{A_{\xi} : \xi < \omega_1\}\) be an almost disjoint family
and suppose that there is an almost increasing sequence \(\{V_{\alpha}\}_{\alpha < \omega_1} \subseteq \mathcal{P}(\N)\) such that \(A_{\xi}
\subseteq^* V_{\alpha}\) for \(\xi < \alpha\) and
\(A_{\xi} \cap V_{\alpha} =^* \emptyset\) for \(\omega_1>\xi \geq \alpha\). 
Then there is a coherent family \(\{s_{\alpha} : \alpha < \omega_1\}\) such that 
$$A_{\xi} =^* s_{\xi + 1}\big[[\omega \cdot \xi, \omega \cdot \xi +\omega)\big].$$
Consequently, by Lemma \ref{coherent-embedding}, there is an isometric embedding 
\(S: \ell_{\infty}^{c}(\omega_1) \to C(\N^*)\) such that $S(1_{\{\xi\}})=1_{[A_\xi]}$.
\end{lemma}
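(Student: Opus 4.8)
The plan is to build, by transfinite recursion on $\alpha<\omega_1$, a coherent family $\{s_\alpha:\alpha<\omega_1\}$ that is adapted to the given data $\{A_\xi\}$ and $\{V_\alpha\}$, and then to quote Lemma~\ref{coherent-embedding}. Write $I_\xi=[\omega\cdot\xi,\omega\cdot\xi+\omega)$, so $\omega\cdot\alpha=\bigcup_{\xi<\alpha}I_\xi$. Along the recursion I would maintain the following for each $\alpha$: (a) $s_\alpha\colon\omega\cdot\alpha\to\omega$ is injective; (b) $s_\alpha|_{\omega\cdot\beta}=^*s_\beta$ for every $\beta<\alpha$; (c) $\ran(s_\alpha)\subseteq^* V_\alpha$; and (d) $s_{\xi+1}[I_\xi]=^*A_\xi$ for every $\xi$ with $\xi+1\le\alpha$ --- here (a) and (b) say precisely that $\{s_\alpha\}$ is coherent in the sense of Definition~\ref{def-coherent}, while (d) will be arranged at successor steps and then inherited at limits via (b). Once the recursion is complete, put $A_\xi'\deff s_{\xi+1}[I_\xi]$; by (d) we have $[A_\xi']=[A_\xi]$, so Lemma~\ref{coherent-embedding} applied to the coherent family $\{s_\alpha\}$ yields an isometric embedding $S\colon\ell_\infty^c(\omega_1)\to C(\N^*)$ with $S(1_{\{\xi\}})=1_{[A_\xi']}=1_{[A_\xi]}$, which is the assertion.

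At a successor $\alpha=\xi+1$: by (c) for $s_\xi$ and the hypothesis $A_\xi\cap V_\xi=^*\emptyset$, the set $\ran(s_\xi)\cap A_\xi$ is finite, so $A_\xi\setminus\ran(s_\xi)$ is infinite. Fix a bijection of $I_\xi$ onto $A_\xi\setminus\ran(s_\xi)$ and let $s_{\xi+1}$ be $s_\xi$ extended by that bijection on $I_\xi$. Then $s_{\xi+1}$ is injective and extends $s_\xi$ exactly, giving (a) and (b); $s_{\xi+1}[I_\xi]=A_\xi\setminus\ran(s_\xi)=^*A_\xi$ gives (d); and $\ran(s_{\xi+1})=\ran(s_\xi)\cup(A_\xi\setminus\ran(s_\xi))\subseteq^* V_\xi\cup A_\xi\subseteq^* V_{\xi+1}$, using that $V_\xi\subseteq^* V_{\xi+1}$ (almost increasing) and $A_\xi\subseteq^* V_{\xi+1}$; this is (c). The case $\alpha=0$ is trivial, with $s_0=\emptyset$.

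The \emph{main obstacle} is the limit step, and it is exactly what Lemma~\ref{nice-ext} is designed to absorb: at a limit $\alpha$ the relations in (b) hold only modulo finite sets, so the naive union of the $s_\beta$, $\beta<\alpha$, need not be a function, let alone an injection with range inside $V_\alpha$, and the finite errors could a priori accumulate. To handle this, fix an increasing sequence $\beta_0<\beta_1<\cdots$ cofinal in $\alpha$ with $\beta_0\ge1$, and recursively construct a $\subseteq$-increasing chain of injections $t_n\colon\omega\cdot\beta_n\to V_\alpha$ with $t_n=^*s_{\beta_n}$. The passage from $t_n$ to $t_{n+1}$ is an application of Lemma~\ref{nice-ext} with $A=\omega\cdot\beta_n$, $B=\omega\cdot\beta_{n+1}$, $C=V_\alpha$, $f=t_n$, $g=s_{\beta_{n+1}}$ and, say, $F=\emptyset$: its hypotheses hold because $B\setminus A$ is infinite; $\ran(t_n)\subseteq V_\alpha$ by construction; $\ran(s_{\beta_{n+1}})\subseteq^* V_{\beta_{n+1}}\subseteq^* V_\alpha$ by (c) and almost-increasingness; $f=t_n=^*s_{\beta_n}=^*s_{\beta_{n+1}}|_{\omega\cdot\beta_n}$ by (b) for the part already built; and $V_\alpha\setminus\ran(s_{\beta_{n+1}})$ is infinite, since for any $\xi$ with $\beta_{n+1}\le\xi<\alpha$ one has $A_\xi\subseteq^* V_\alpha$ while $A_\xi\cap\ran(s_{\beta_{n+1}})\subseteq^* A_\xi\cap V_{\beta_{n+1}}=^*\emptyset$. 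The lemma then returns $t_{n+1}$ as required. For $t_0$ one applies the same lemma, or simply takes $t_0$ to be $s_{\beta_0}$ with its finitely many values outside $V_\alpha$ reassigned to fresh points of $V_\alpha$.

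It remains to set $s_\alpha\deff\bigcup_n t_n$ and check the invariants. Its domain is $\bigcup_n\omega\cdot\beta_n=\omega\cdot\alpha$; it is injective as the union of a chain of injections; $\ran(s_\alpha)=\bigcup_n\ran(t_n)\subseteq V_\alpha$, giving (c); for any $\beta<\alpha$, picking $n$ with $\beta<\beta_n$ gives $s_\alpha|_{\omega\cdot\beta}=t_n|_{\omega\cdot\beta}=^*s_{\beta_n}|_{\omega\cdot\beta}=^*s_\beta$, giving (b); and (d) for $\xi+1\le\alpha$ (so $\xi+1<\alpha$, as $\alpha$ is a limit) follows from (b) since $s_\alpha|_{\omega\cdot(\xi+1)}=^*s_{\xi+1}$ already satisfies (d). Thus all invariants persist through every stage, the recursion goes through, and the argument closes as in the first paragraph.
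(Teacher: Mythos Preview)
Your proof is correct and follows essentially the same strategy as the paper: transfinite recursion on $\alpha<\omega_1$, with Lemma~\ref{nice-ext} absorbing the finite errors at limit stages. The one noteworthy difference is that the paper first replaces $\{V_\alpha\}$ by a sequence $\{W_\alpha\}$ and then maintains the stronger invariant $\ran(s_\alpha)=W_\alpha$ exactly, using the $F$-parameter of Lemma~\ref{nice-ext} to force surjectivity; this lets them read off $s_{\xi+1}[I_\xi]=^*A_\xi$ from the identity $W_{\xi+1}\setminus W_\xi=^*A_\xi$. Your weaker invariant $\ran(s_\alpha)\subseteq^* V_\alpha$ suffices because you arrange (d) directly at each successor step, which is a mild streamlining: you avoid introducing $W_\alpha$ and may take $F=\emptyset$ throughout.
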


\begin{proof} Like in the proof of Lemma \ref{embedding}, we denote 
\(\xi^+ = \xi + 1\) and \(I_{\xi} = [\omega \cdot \xi, \omega \cdot \xi^+)\). 
For any $\alpha = \beta + k$ where \(\beta < \omega_1\) is a limit ordinal and \(k \in \omega\) consider  
$$
W_{\alpha} = V_{\beta} \cup \bigcup_{i<k}A_{\beta+i}.
\leqno \text{(i)}
$$
Note that the assumptions 
of the lemma concerning $\{V_{\alpha}\}_{\alpha < \omega_1}$ imply
the same properties for $\{W_{\alpha}\}_{\alpha < \omega_1}$, i.e., it is 
an almost increasing sequence such that \(W_{\alpha}\) is a separation of \(\{A_\xi : \xi < \alpha\}\) and \(\{A_{\xi}  :\alpha \leq \xi < \omega_1\}\) for every \(\alpha< \omega_1\). 
It follows that it is enough to find \(s_{\alpha} : \omega \cdot \alpha \to \omega\) which satisfy for any  $\alpha<\omega_1$:
\begin{enumerate}
\item[(ii)] $s_\alpha$ is an injection,
\item[(iii)] $s_{\alpha}|{\omega \cdot \beta} =^* s_{\beta}$ for every \(\beta < \alpha\),
\item[(iv)] $\ran(s_{\alpha})=W_\alpha$.
\end{enumerate}
Then by (i) - (iv) and the separation properties of \(\{W_\alpha\}_{\alpha< \omega_1}\)
$$
s_{\xi^+}[I_\xi]= \ran(s_{\xi^+}) \setminus s_{\xi^+}[\omega \cdot \xi] =^*
\ran(s_{\xi^+})\setminus\ran(s_{\xi})= W_{\xi^+} \setminus W_{\xi} =^*A_\xi.
$$
To construct $s_{\xi}$ we proceed in the standard way by induction on $\xi<\omega_1$. 
In the successor case, define $s_{\xi^+}$ as an extension
of $s_{\xi}$ by any bijection from $I_\xi$ onto $A_\xi\setminus W_\xi$.
There is such a bijection as both sets are infinite and countable by (i) and the properties of the sequence 
of $W_\xi$s.

Now suppose that \(\xi\) is a limit ordinal. To construct $s_\xi$
satisfying (ii) - (iv) fix a strictly increasing sequence of ordinals \(\{\xi_n\}_{n\in \N}\)
such that \(\xi_0 = 0\) and \(\sup_{n\in \N}{\xi_n} = \xi\).
Let \(\sigma: \N \to W_{\xi}\) be a bijection.
Recursively over \(n \geq 1\) we define a sequence {} \((t_n)_{n\in \N}\), where 
\begin{enumerate}
\item[(a)] \(t_n : \omega \cdot \xi_{n} \to W_{\xi} \) is an injection,
\item[(b)] \(t_n =^* s_{\xi_n}\), 
\item[(c)] \(t_k \subseteq t_n\) for \(k < n\),
\item[(d)] \(\sigma[n] \subseteq \ran(t_n)\).
\end{enumerate}
Put \(t_0 = \emptyset\). Having already defined \((t_k)_{k<n}\) satisfying (a) - (d)
we apply Lemma \ref{nice-ext} to $A=\omega \cdot \xi_{n-1}$, $f=t_{n-1}$,
$B=\omega \cdot \xi_{n}$, $g=s_{\xi_n}$, $C=W_\xi$, $F=\sigma[n]$.
To do so, we check that the hypotheses (1) - (7) of this lemma are satisfied:
hypotheses (1) and (7) follow directly from the definitions of $A, B, F, \sigma$; 
(2) follows from the inductive hypothesis (a); (3) from
the inductive hypothesis (ii) for $\xi_{n}<\xi$; (4) and (5) from 
(i) and the inductive hypothesis (iv) for $\xi_{n}<\xi$; and (6) from
the inductive hypothesis (iii) for $\xi_{n}<\xi$ and the inductive hypothesis (b).
So Lemma \ref{nice-ext} gives us $t_n$ as in (a) - (d). This completes
the recursive construction of $(t_n)_{n\in \N}$.   

Finally we set \(s_{\xi} := \bigcup_{n \in \N} t_n\) and check that it satisfies (ii) - (iv). 
As an increasing union of injections (by (a) and (c)) with the images contained 
in \(W_{\xi}\), \(s_{\xi}\) is an injection with the image in \(W_{\xi}\) itself,
so (ii) and one of the inclusions of (iv) hold.
The condition (d) ensures that \(\ran(s_{\xi}) = \bigcup_n \ran(t_n) \supseteq \bigcup_n \sigma[n] = 
\ran(\sigma) = W_{\xi}\), so the other inclusion of (iv) holds as well.
For any \(\alpha < \xi\) there is
\(n \in \N\) such that \(\xi_n > \alpha\) and 
\(s_{\xi}|{\omega \cdot \alpha} = 
t_{n}|{\omega \cdot \alpha} =^* s_{\xi_n}|{\omega \cdot \alpha} =^* s_{\alpha}\),
so $s_\xi$ satisfies (iii), as required.
\end{proof}

\begin{proposition}\label{b>omega1}
If  $\mathfrak b>\omega_1$, then
for every almost disjoint family $\A=\{A_\xi: \xi<\omega_1\}$ 
there is an isometric embedding $S:\ell_\infty^{c}(\omega_1)\rightarrow C(\N^*)$ satisfying
$$S(1_{\{\xi\}})=1_{[A_\xi]}$$
for every $\xi<\omega_1$.
\end{proposition}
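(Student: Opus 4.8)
The plan is to reduce the statement, via Lemma \ref{iso-chain}, to the construction of an almost increasing $\omega_1$-sequence $\{V_\alpha\}_{\alpha<\omega_1}$ of subsets of $\N$ which ``separates the family at every level'', i.e.\ $A_\xi\subseteq^* V_\alpha$ for $\xi<\alpha$ and $A_\xi\cap V_\alpha=^*\emptyset$ for $\alpha\le\xi<\omega_1$. Once such a sequence is in hand, Lemma \ref{iso-chain} produces the coherent family and hence the desired isometric embedding $S$ with $S(1_{\{\xi\}})=1_{[A_\xi]}$, so the whole content of the proposition is in building the $V_\alpha$'s.

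First I would fix an enumeration $\A=\{A_\xi:\xi<\omega_1\}$ and attempt the construction of $V_\alpha$ by transfinite recursion on $\alpha<\omega_1$. At a successor step $\alpha=\beta+1$ the update is trivial: set $V_{\beta+1}=V_\beta\cup A_\beta$; since the $A_\xi$'s are pairwise almost disjoint, $A_\xi\cap V_{\beta+1}=^*\emptyset$ for $\xi>\beta$ is preserved, and $A_\beta\subseteq^* V_{\beta+1}$ trivially. The crux is the limit step: given a limit ordinal $\alpha<\omega_1$ and $\{V_\beta:\beta<\alpha\}$ almost increasing with the required separation properties, I must produce $V_\alpha\supseteq^* V_\beta$ (all $\beta<\alpha$) which is almost disjoint from $A_\xi$ for every $\xi\ge\alpha$. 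This is exactly where $\mathfrak b>\omega_1$ enters. Fix a cofinal $\omega$-sequence $\beta_0<\beta_1<\cdots$ with supremum $\alpha$ and enumerate $\{A_\xi:\xi\ge\alpha\}$ — this set still has size $\le\omega_1$, but that is too large in general, so instead I diagonalize only against the countably many constraints that actually arise: $V_\alpha$ must almost contain $V_{\beta_n}$ for each $n$, and be almost disjoint from each of the countably many sets $A_{\beta_n}$ (which suffices, since any $A_\xi$ with $\xi\ge\alpha$ is almost disjoint from each $A_{\beta_n}$ and almost contained in no $V_\beta$... here one needs to be careful). The correct mechanism: for each $n$, since $A_{\beta_n}\cap V_{\beta_{n+1}}=^*\emptyset$ does \emph{not} hold in general, one instead works with the ``trace'' functions $g_n\colon\N\to\N$ measuring how much of $V_{\beta_n}$ sits below a given point, and uses an unbounded/dominating argument: because $\mathfrak b>\omega_1$, the $\le\omega_1$-many functions coding the sequence $\{V_\beta:\beta<\alpha\}$ together with the enumeration of $\A$ are bounded by a single $h\in\N^\N$, and $V_\alpha$ is defined as $\bigcup_n (V_{\beta_n}\cap[0,h(n))) \cup (\text{a tail chosen to kill each }A_\xi)$, arranged so that each $V_\beta$ is almost contained in $V_\alpha$ and each $A_\xi$, $\xi\ge\alpha$, meets $V_\alpha$ finitely.

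Making this precise is the main obstacle, and the honest way to handle it is: for each $\beta<\alpha$ let $f_\beta\in\N^\N$ enumerate $V_\beta$ increasingly (or be its characteristic function reorganized), and for each $\xi<\omega_1$ let $a_\xi\in\N^\N$ enumerate $A_\xi$; the collection $\{f_\beta:\beta<\alpha\}\cup\{a_\xi:\xi<\omega_1\}$ has cardinality $\le\omega_1<\mathfrak b$, hence is dominated by some single $g\in\N^\N$. Using $g$, I would define $V_\alpha$ as a suitable union $\bigcup_{n}\big(V_{\beta_n}\setminus[0,g(n))\big)$ (so that for fixed $\beta$, choosing $n$ with $\beta_n\ge\beta$ one gets $V_\beta\subseteq^* V_{\beta_n}\subseteq^* V_\alpha$) while verifying that the domination of the $a_\xi$'s by $g$ forces $A_\xi\cap V_\alpha$ to be contained in an initial segment, hence finite, for each $\xi\ge\alpha$; the point is that beyond $g(n)$ the set $V_{\beta_n}$ is already almost disjoint from $A_\xi$ because of the inductive hypothesis applied at level $\beta_n<\alpha\le\xi$, and $g$ absorbs all the finitely-many exceptions uniformly. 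I would then check the successor and limit clauses are consistent and conclude by feeding $\{V_\alpha:\alpha<\omega_1\}$ into Lemma \ref{iso-chain}. The delicate bookkeeping — ensuring the \emph{same} $g$ simultaneously handles ``almost increasing'' and ``almost disjoint from all later $A_\xi$'', and that only $\le\omega_1$ functions are ever dominated at once — is where all the care goes; everything after invoking Lemma \ref{iso-chain} is immediate.
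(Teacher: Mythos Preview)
Your overall strategy matches the paper's exactly: construct an almost increasing sequence $\{V_\alpha\}_{\alpha<\omega_1}$ with the separation property by transfinite recursion, using $V_{\beta+1}=V_\beta\cup A_\beta$ at successor steps, and then feed the result into Lemma~\ref{iso-chain}. The paper handles the limit step in one line by quoting Theorem~3.3 of van~Douwen (the characterization $\mathfrak b_7$ of $\mathfrak b$): since $\{V_\beta:\beta<\alpha\}\cup\{A_\xi:\xi<\alpha\}$ is countable and $\{A_\xi:\alpha\le\xi<\omega_1\}$ has size $\omega_1<\mathfrak b$, a separating set $V_\alpha$ exists.

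Your hand-made domination argument is the right idea (indeed it is essentially what underlies van~Douwen's $\mathfrak b_7$), but the functions you propose to dominate are not the ones that do the job. Dominating the increasing enumerations $a_\xi$ of $A_\xi$ or $f_\beta$ of $V_\beta$ tells you nothing about $A_\xi\cap V_{\beta_n}$. What you need, for a fixed cofinal sequence $(\beta_n)_{n\in\omega}$ in $\alpha$, are the functions
\[
h_\xi(n)=1+\max\bigl(A_\xi\cap V_{\beta_n}\bigr)\qquad(\alpha\le\xi<\omega_1),
\]
which are well-defined by the inductive hypothesis. There are $\omega_1<\mathfrak b$ of them, so some $g$ eventually dominates them all, and then $V_\alpha=\bigcup_n\bigl(V_{\beta_n}\setminus g(n)\bigr)$ works: $V_{\beta_m}\setminus g(m)\subseteq V_\alpha$ gives the ``almost increasing'' part, and for $\xi\ge\alpha$ one has $A_\xi\cap(V_{\beta_n}\setminus g(n))=\emptyset$ for all large $n$, so $A_\xi\cap V_\alpha$ is finite. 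With this correction your argument goes through; the paper simply outsources it to the cited reference.
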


\begin{proof}
Assume $\mathfrak b>\omega_1$ and fix an almost disjoint family
$\A=\{A_\xi: \xi<\omega_1\}$. We will construct an almost
increasing sequence $\{V_\alpha\}_{\alpha<\omega_1}$ of subsets of $\N$ such that
\begin{enumerate}
\item[(a)] $A_\xi\subseteq^* V_\alpha$ for $\xi<\alpha<\omega_1$,
\item[(b)] $A_\xi\cap V_\alpha=^*\emptyset$ for $\omega_1>\xi\geq \alpha$.
\end{enumerate}
This is done by induction on $\alpha<\omega_1$. For the successor case, we put
$V_{\alpha+1}=V_\alpha\cup A_\alpha$. For the limit case, we note that 
the inductive hypotheses (a), (b) imply that $B\cap C=^*\emptyset$
for any $B\in \B$ and any $C\in \CC$, where
$$\B=\{A_\xi: \xi<\alpha\}\cup\{V_\xi: \xi<\alpha\},\  \CC=\{A_\xi: \omega_1>\xi\geq \alpha\},$$
as well as $\B$ is countable and $|\CC|=\omega_1<\mathfrak b$.
By Theorem 3.3 from \cite{douwen} (case ${\mathfrak b}_7$) this implies
the existence of a separation $V_\alpha\subseteq \N$ such that the
appropriate cases of (a) and (b) hold.

Now use Lemma \ref{iso-chain} to conclude the proof.
\end{proof}

\section{Obstructions for isomorphic embeddings of $\ell_\infty^c(\kappa)$ into $\ell_\infty/c_0$}

\begin{lemma}\label{norm-outside}
Suppose that $S:\ell_\infty^c(\kappa)\rightarrow C(\N^*)$ is a
linear operator such that $S(1_{\{\xi\}})=1_{[A_\xi]}$, where
$\kappa$ is an uncountable cardinal and  $\{A_\xi: \xi<\kappa\}$ is an almost disjoint family. Let 
$$r= \inf_{C \in [\kappa]^{\leq \omega}}\sup_{f, x}\{|S(f)(x)|: f\in \ell_\infty^c(\kappa\setminus C),
\, \|f\|=1, \, x\in \bigcup_{\xi\in \kappa \setminus C}[A_\xi]\},$$
$$R=\inf_{C \in [\kappa]^{\leq \omega}}\sup_{f, x} \{|S(f)(x)|: 
f\in \ell_\infty^c(\kappa \setminus C)), \, \|f\|=1, \, x\not\in \bigcup_{\xi<\kappa}[A_\xi]\}.$$
Then $R\geq r$.
\end{lemma}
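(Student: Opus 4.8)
The plan is to argue by contradiction: suppose $r > R$. I would like to manufacture, from the definition of $r$, a sequence of unit vectors in $\ell_\infty^c$ whose images on the relevant $[A_\xi]$'s stay uniformly large (close to $r$), and then combine countably many of these witnesses — living on disjoint finite pieces of the index set and of $\N$ — into a single element $f$ of $\ell_\infty^c(\kappa)$ (still of norm $1$, since $\ell_\infty^c$ is closed under countably-supported sums of disjointly-supported bounded pieces) whose image $S(f)$ is large both \emph{inside} $\bigcup[A_\xi]$ and, by a diagonalization forcing the witnesses deeper and deeper, also \emph{outside}, contradicting the bound $R$. The key point is that $R$ is an infimum over countable exclusions $C$: if $f$ has countable support we can always split off the part of $f$ supported outside a prescribed $C$, so any lower bound on $|S(f)(x)|$ at points $x \notin \bigcup_\xi[A_\xi]$ that we can realize with vectors in $\ell_\infty^c(\kappa\setminus C)$ must be $\le R$.

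First I would unwind the quantifiers in the definition of $r$: for every $\varepsilon > 0$ and every countable $C$, there are $f \in \ell_\infty^c(\kappa\setminus C)$, $\|f\| = 1$, and $x \in [A_\xi]$ for some $\xi \in \kappa\setminus C$ with $|S(f)(x)| > r - \varepsilon$. Since $S(f)$ is continuous on $\N^*$ and $[A_\xi]$ is clopen, $|S(f)|$ exceeds $r - \varepsilon$ on a nonempty clopen subset of $[A_\xi]$, hence on $[A']$ for some infinite $A' \subseteq^* A_\xi$, $A'$ a subset of $\N$. Next, because $f$ has countable support, and using $S(1_{\{\eta\}}) = 1_{[A_\eta]}$, I would try to \emph{push the witness off of $\bigcup_\xi [A_\xi]$}: the supports of the functions $1_{[A_\eta]}$ for $\eta$ in the (countable) support of $f$, together with $[A_\xi]$, form a countable family of clopen sets; since $\ell_\infty/c_0$ is nonatomic, inside the clopen piece where $|S(f)| > r-\varepsilon$ we can find a point $x' \notin \bigcup_{\eta<\kappa}[A_\xi]$ — more precisely, pass to a further infinite $A'' \subseteq A'$ almost disjoint from every $A_\eta$ with $\eta$ in $\operatorname{supp}(f)$ and from $A_\xi$, and take $x'' \in [A'']$; continuity of $S(f)$ keeps $|S(f)(x'')| > r - \varepsilon$. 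Now $x''$ may still lie in some $[A_\zeta]$ with $\zeta \notin \operatorname{supp}(f)$, so the honest way is to build $A''$ to be almost disjoint from an entire \emph{maximal} almost disjoint family refining $\{A_\xi\}$, or alternatively to note that replacing $f$ by a countable sum of such witnesses with pairwise almost-disjoint "output sets" $A''$ eventually produces a point genuinely outside $\bigcup_{\xi<\kappa}[A_\xi]$.

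The cleanest route, which I would adopt, is: fix $\varepsilon > 0$; recursively build a strictly increasing sequence of ordinals, unit vectors $f_n \in \ell_\infty^c(\kappa)$ with pairwise disjoint supports, and infinite sets $A^{(n)} \subseteq \N$, pairwise disjoint, with $A^{(n)}$ almost disjoint from every $A_\eta$ with $\eta \in \bigcup_{m \le n}\operatorname{supp}(f_m)$, such that $|S(f_n)(x)| > r - \varepsilon$ for all $x \in [A^{(n)}]$; at stage $n$ the existence of $f_n$ comes from the definition of $r$ applied to the countable set $C = \bigcup_{m<n}\operatorname{supp}(f_m)$, and $A^{(n)}$ is extracted as above. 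Set $f = \sum_n f_n$ (well-defined in $\ell_\infty^c(\kappa)$ with $\|f\| = 1$ as the $f_n$ are disjointly supported unit vectors) and fix any nonprincipal ultrafilter $x^*$ on $\N$ refining the filter generated by $\{A^{(n)}: n \in \N\}$'s "tails" — i.e. a point $x^* \in \N^*$ in the closure of each $[A^{(n)}]$-family; then for each $n$, $S(f)(x^*) = S(f_n)(x^*) + S(f - f_n)(x^*)$, and since $x^*$ is in the closure of sets eventually almost disjoint from the supports of $f-f_n$ one controls the second term, forcing $|S(f)(x^*)| \ge r - \varepsilon - o(1)$; moreover $x^* \notin \bigcup_{\xi<\kappa}[A_\xi]$ because $x^*$ sits in the closure of a decreasing-mod-finite tower of sets each almost disjoint from more and more of the $A_\xi$'s. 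Hence $R \ge r - \varepsilon$ for every $\varepsilon$, giving $R \ge r$.

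The main obstacle is the bookkeeping in producing a single point $x^* \in \N^*$ that simultaneously (i) lies outside every $[A_\xi]$, $\xi < \kappa$, and (ii) sees $|S(f)|$ bounded below by (nearly) $r$. Getting (i) and (ii) together requires the output sets $A^{(n)}$ to form a $\subseteq^*$-decreasing tower whose members become almost disjoint from an exhaustion of $\{A_\xi\}$, and requires that $x^*$ be chosen in $\bigcap_n [A^{(n)}]$ — one must check this intersection is nonempty in $\N^*$, which holds precisely when the $A^{(n)}$ have the finite (indeed countable, mod finite) intersection property, so they should be built $\subseteq^*$-decreasing rather than merely disjoint. Reconciling "disjoint supports for the $f_n$" with "$\subseteq^*$-decreasing output sets $A^{(n)}$" is the delicate combinatorial step; I expect it is handled exactly by the kind of almost-disjoint-refinement argument used in Proposition \ref{iso-separation} and Lemma \ref{nice-ext}, and by exploiting that at each finite stage only countably much of the index set $\kappa$ has been touched.
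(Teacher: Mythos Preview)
Your proposal has the right overall shape---build disjointly supported witnesses $f_n$, sum them to a single $f\in\ell_\infty^c(\kappa)$ of norm one, and produce a point outside $\bigcup_\xi[A_\xi]$ where $|S(f)|$ is large---but there is a genuine gap at its core: you have no way to control $S(f-f_n)(x^*)$. The sentence ``since $x^*$ is in the closure of sets eventually almost disjoint from the supports of $f-f_n$ one controls the second term'' conflates the support of $f-f_n$ as a subset of $\kappa$ with some kind of support of $S(f-f_n)$ in $\N^*$. All we know about $S$ beyond boundedness is $S(1_{\{\eta\}})=1_{[A_\eta]}$; for an element with infinite support such as $f-f_n=\sum_{m\neq n} f_m$, there is no reason $S(f-f_n)$ should be small at any prescribed point. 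In particular $S\big(\sum_m f_m\big)\neq \sum_m S(f_m)$ in general: the right-hand side need not even converge, and where it does the equality can fail because $S$ is only bounded for the sup norm, in which the partial sums do not converge.

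The paper closes exactly this gap with two ingredients you are missing. First, Rosenthal's lemma is used to thin the sequence so that at the chosen witness points $x_n\in[A_{\xi_n}]$ one has $\sum_{k\neq n}|S(f_k)(x_n)|<\varepsilon$. Second---and this is the key step---one still needs to pass from the ``matrix'' value $\sum_{k} S(f_k)(x_n)$ to the actual value $S\big(\sum_k f_k\big)(x_n)$; the paper does this by invoking Theorem~2.5.4 of \cite{A&K}, which guarantees that two bounded operators $\ell_\infty\to\ell_\infty$ agreeing on $c_0$ must agree on $\ell_\infty(M)$ for some infinite $M\subseteq\N$. Taking $f=\sum_{k\in M} f_k$, one then has honest lower bounds $|S(f)(x_n)|>r-2\varepsilon$ for $n\in M$, and any accumulation point $x$ of $\{x_n:n\in M\}$ works. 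Note also that the paper's choice of $x$ as an accumulation point of points lying in pairwise disjoint clopen sets $[A_{\xi_n}]$ automatically forces $x\notin\bigcup_{\xi<\kappa}[A_\xi]$; your tower construction, by contrast, only arranges $x^*\notin[A_\eta]$ for the countably many $\eta\in\bigcup_n\supp(f_n)$, and you yourself flag (but do not resolve) the tension between ``disjoint supports for the $f_n$'' and ``$\subseteq^*$-decreasing output sets''.
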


\begin{proof}
Fix $C\in [\kappa]^{\leq \omega}$. By the definition of \(r\), whenever we have a 
finite family \(\mc{F}\) of elements of \(\ell_{\infty}^c(\kappa)\) and \(\varepsilon > 0\), 
we can find \(g \in \ell_{\infty}^{c}(\kappa \setminus (C \cup \bigcup_{f \in \mc{F}} \supp f))\) 
of norm one such that \(|S(g)(x)| > r - \varepsilon\) for some \(x \in [A_\xi]\) where
\(\xi \in \kappa \setminus (C \cup \bigcup_{f \in \mc{F}} \supp f)\). Therefore, fixing $\varepsilon>0$
we choose a sequence $\{f_n: n\in \N\}$
of disjointly supported elements of norm one of $ \ell_\infty^c( \kappa \setminus C)$
such that for every \(n \in \N\) there is a different ordinal $\xi_n$ in 
$\kappa \setminus C$ and $x_n\in [A_{\xi_n}]$ with
$$|S(f_n)(x_n)|>r - \varepsilon.\leqno (*)$$ 
 For any \(x \in \N^*\) let \(\epsilon_k = \mathrm{sgn}(f_k(x))\). Note that
\[
\sum_{k \in \N} |S(f_k)(x)| = \sup_{E \in [\N]^{< \omega}} \sum_{k \in E} |S(f_k)(x)| = \sup_{E \in [\N]^{< \omega}} S\big(\sum_{k \in E} \epsilon_k f_k\big)(x)\]
\[\leq \sup_{E \in [\N]^{< \omega}}\norm{S} \cdot \big\|\sum_{k \in E} \epsilon_kf_k\big\| \leq \norm{S}.
\]
Applying Rosenthal's Lemma 1.1 of \cite{rosenthal} 
(for \(\Lambda = \Gamma = \N\), \(\mu_n(E) = \sum_{k \in E} |S(f_k)(x_n)|\) 
and \(E_n = \{n\}\))  and passing to an infinite subset of \(\N\) we may assume that 
for every $n\in \N$ we have 
$$\sum_{k\in \N\setminus\{n\}}|S(f_k)(x_n)|<\varepsilon.\leqno (**)$$
Let \(J: \ell_{\infty} \to \ell_{\infty}^{c}(\kappa)\) and {}\(R: C(\N^*) \to \ell_{\infty}\){}
be given by the formulas:
$$J((a_n)_{n\in \N}) = \sum_{n\in \mathbb{N}} a_n f_n$$
for \((a_n)_{n\in \N} \in \ell_{\infty}\), where the infinite sum is defined coordinatewise,
$$R(g) = (g(x_n))_{n\in \mathbb{N}}$$
for \(g \in C(\N^*)\). 
Let {} $T, T':\ell_\infty\rightarrow \ell_\infty$
be given by \(T = R \circ S \circ J\) and
$$T'((a_k)_{k\in\N}) = \Big(\sum_{k\in\N}a_kS(f_k)(x_n)\Big)_{n\in \N}.$$
By (**) $T'$ is well defined and bounded.
Then \(T|{c_0} = T'|{c_0}\) because $T(1_{\{k\}})=(S(f_k)(x_n))_{n\in \N}=T'(1_{\{k\}})$ for each $k\in \N$.
So by Theorem 2.5.4 from \cite{A&K} 
there exists an infinite subset \(M \subseteq \N\) such that \(T\) and \(T'\) coincide on \(\ell_{\infty}(M)\).

Let \(f = J(1_M) = \sum_{n\in M} f_n\), where the infinite sum is defined coordinatewise.
Observe that \(S(f)(x_n) =
T(1_{M})(n) = T'(1_M)(n) =  \sum_{k\in M}S(f_k)(x_n)\) for \(n \in M\). So by (*) and (**) for every $n\in M$ we have 
$$|S(f)(x_n)|\geq |S(f_n)(x_n)|-\sum_{k\in M\setminus\{n\}}|S(f_k)(x_n)|> r-2\varepsilon.\leqno (***)$$
Let $x$ be any accumulation point of $\{x_n: n\in M\}$ in {}$\N^*$. {}Clearly $|S(f)(x)|\geq r-2\varepsilon$ by (***).
As $x_n$s are contained in pairwise disjoint sets $[A_{\xi_n}]$ and each \([A_{\xi}]\) is open, we conclude
that $x\not\in  \bigcup_{\xi<\kappa}[A_\xi]$. Also, \(f \in \ell_\infty^c(\kappa \setminus C)\) and \(\norm{f} = 1\). Since \(C\) and $\varepsilon>0$ were arbitrary, we get \(R \geq r\).
\end{proof}

\begin{lemma}\label{mad} Let \(\mathcal{A}\) be a maximal almost disjoint family. 
Then for every subset \(X \subseteq \N\) such that 
\(X\) is not almost covered by finitely many elements of \(\mathcal{A}\) 
there exist \(\mathfrak{a}\) many \(A \in \mathcal{A}\) such that \(|A \cap X| = \omega\).
\end{lemma}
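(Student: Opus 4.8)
The plan is to argue by contradiction in two stages: first rule out that only finitely many members of $\mathcal{A}$ meet $X$ in an infinite set, and then use the defining minimality of $\mathfrak{a}$ to upgrade ``infinitely many'' to ``$\mathfrak{a}$-many''.

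\emph{First stage.} Suppose $\{A \in \mathcal{A} : |A \cap X| = \omega\}$ were finite, say equal to $\{A_1, \dots, A_n\}$ (possibly $n = 0$). I would then consider $X' = X \setminus (A_1 \cup \dots \cup A_n)$. Every $A \in \mathcal{A}$ meets $X'$ in a finite set: the $A_i$ meet it in $\emptyset$, and every other $A \in \mathcal{A}$ meets it in a subset of $A \cap X$, which is finite by assumption. If $X'$ were infinite, then $X' \notin \mathcal{A}$ (it would have infinite intersection with itself) and $\mathcal{A} \cup \{X'\}$ would be a strictly larger almost disjoint family, contradicting maximality of $\mathcal{A}$. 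Hence $X'$ is finite, i.e.\ $X \subseteq^* A_1 \cup \dots \cup A_n$, contradicting the hypothesis that $X$ is not almost covered by finitely many members of $\mathcal{A}$. So $\kappa \deff |\{A \in \mathcal{A} : |A \cap X| = \omega\}|$ is infinite.

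\emph{Second stage.} Assume toward a contradiction that $\kappa < \mathfrak{a}$. Enumerate the relevant members as $\{A_\alpha : \alpha < \kappa\}$ and put $\mathcal{B} = \{A_\alpha \cap X : \alpha < \kappa\}$. Each $A_\alpha \cap X$ is infinite by the choice of the $A_\alpha$, and for $\alpha \ne \beta$ one cannot have $A_\alpha \cap X = A_\beta \cap X$, since that set would be contained in the finite set $A_\alpha \cap A_\beta$; so $\mathcal{B}$ is an almost disjoint family of infinite subsets of $X$ of cardinality exactly $\kappa$. Fixing a bijection between the countably infinite set $X$ and $\N$ (which does not affect the value of $\mathfrak{a}$), the family $\mathcal{B}$ has cardinality $\kappa < \mathfrak{a}$ and therefore is not a maximal almost disjoint family on $X$. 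Consequently there is an infinite $Y \subseteq X$ with $|Y \cap B| < \omega$ for every $B \in \mathcal{B}$. Now check that $Y$ contradicts the maximality of $\mathcal{A}$: for each $\alpha < \kappa$, since $Y \subseteq X$ we have $Y \cap A_\alpha = Y \cap (A_\alpha \cap X)$, which is finite because $A_\alpha \cap X \in \mathcal{B}$; and for any other $A \in \mathcal{A}$ we have $Y \cap A \subseteq X \cap A$, which is finite by the definition of $\kappa$. Thus $Y$ is an infinite subset of $\N$ almost disjoint from every member of $\mathcal{A}$ with $Y \notin \mathcal{A}$, contradicting maximality. Hence $\kappa \geq \mathfrak{a}$, as required.

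The argument is essentially routine; the one point that needs care is the second stage, where one must remember to relativize the almost disjoint family to $X$ (so that its members stay infinite and $\mathfrak{a}$ is still the correct bound, the ground set being countable) and then verify that the newly produced set $Y$ is almost disjoint not merely from the $\kappa$ chosen members but from all of $\mathcal{A}$, including those meeting $X$ only finitely.
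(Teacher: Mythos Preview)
Your proof is correct and follows essentially the same route as the paper's: relativize $\mathcal{A}$ to $X$, observe that the resulting almost disjoint family on $X$ has size below $\mathfrak{a}$ and hence is not maximal, and use the new infinite set thus obtained to contradict the maximality of $\mathcal{A}$. The paper compresses your two stages into a single contradiction (assuming directly that $|\{A\in\mathcal{A}:|A\cap X|=\omega\}|<\mathfrak{a}$ and producing the extending set $C$), whereas you treat the finite and the infinite-but-below-$\mathfrak{a}$ cases separately; this makes your write-up more explicit but not substantively different.
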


\begin{proof}
By contradiction, suppose that \(X \subseteq \N\) is not almost 
covered by finitely many elements of \(\mathcal{A}\), but 
\(|\{A \in \mathcal{A} : |A \cap X| = \omega\}| < \mathfrak{a}\). 
Let \(\mathcal{B} = \{A \in \mathcal{A} : |A \cap X| = \omega\}\).  
Then there exists \(C \subseteq X\) such that \(\{A\cap X : A \in \mathcal{B}\} \cup \{C\}\) 
is an almost disjoint family of subsets of \(X\). But this means 
that also the family \(\mathcal{A} \cup \{C\}\) is almost disjoint,
which contradicts the assumption that \(\mathcal{A}\) is maximal.
\end{proof}

\begin{proposition}\label{noextension} Suppose $\kappa$ is an uncountable cardinal and
$\A=\{A_\xi:\xi<\kappa\}$
is a maximal almost disjoint family\footnote{Note
that consistently $\kappa$ could have countable cofinality by the result of \cite{brendle}.} of subsets of $\N$.
Let $T$ be an isometric embedding $T:c_0(\kappa)\rightarrow C(\N^*)$ satisfying $T(1_{\{\xi\}})=1_{[A_\xi]}$
for every $\xi<\kappa$.
Then there is no linear operator $S:\ell_\infty^c(\kappa)\rightarrow C(\N^*)$
which extends $T$.{}
\end{proposition}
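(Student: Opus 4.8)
The plan is to derive a contradiction from the existence of such an extension $S$ by showing that the quantities $r$ and $R$ from Lemma~\ref{norm-outside} are forced to satisfy incompatible inequalities. First I would observe that since $\A$ is a \emph{maximal} almost disjoint family, any $C\in[\kappa]^{\leq\omega}$ fails to "almost cover" $\N$ in the strong sense: the set $W=\N\setminus\bigcup_{\xi\in C}A_\xi$, if infinite, meets $\mathfrak a$-many (in particular uncountably many, hence at least one outside $C$) elements of $\A$ by Lemma~\ref{mad}. Using this I would show that $r=1$: given any countable $C$ and any finitely supported obstruction set, one can find $\xi\notin C$ and, because $T$ is an isometric embedding with $T(1_{\{\xi\}})=1_{[A_\xi]}$, take $f=1_{\{\xi\}}$ itself, so $|S(f)(x)|=|1_{[A_\xi]}(x)|=1$ for $x\in[A_\xi]$. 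Thus $r\geq 1$, and since $\|S\|\geq\|T\|=1$ would need to be controlled, one also gets $r\leq 1$; in any case $r\geq 1$ suffices. By Lemma~\ref{norm-outside} this yields $R\geq 1$ as well.

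Next I would exploit $R\geq 1$ together with the fact that $S$ extends $T$ and that $T$ is an \emph{isometry} onto the copy of $c_0(\kappa)$ generated by the $1_{[A_\xi]}$. The point of $R$ is that it detects norm of $S(f)$ \emph{off} the union $\bigcup_{\xi<\kappa}[A_\xi]$. But $T[c_0(\kappa)]$ consists precisely of functions supported on $\bigcup_{\xi<\kappa}[A_\xi]$ (each $1_{[A_\xi]}$ vanishes outside $[A_\xi]$, and limits of finite linear combinations still vanish off the closure $\overline{\bigcup[A_\xi]}$, which in $\N^*$ equals a clopen set disjoint from the relevant points). Here is where maximality enters again: because $\A$ is maximal, $\N^*\setminus\overline{\bigcup_\xi[A_\xi]}=\emptyset$, i.e. $\{[A_\xi]:\xi<\kappa\}$ is dense in $\N^*$. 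So a function in the range of $T$ — and more generally any $S(f)$ with $f$ in the closure of $\ell_\infty^c$'s "$c_0$-part" — is determined by its values on $\bigcup[A_\xi]$, and the "off" values are controlled by values on the $[A_\xi]$'s. I would make this precise to show $R$ must actually be $0$ or bounded by something $<1$: any $g\in C(\N^*)$ with $g|_{[A_\xi]}=0$ for all $\xi<\kappa$ is identically $0$ by density, hence for $f$ supported away from a large $C$ the value $S(f)(x)$ at $x\notin\bigcup[A_\xi]$ is a limit of values $S(f)(x_n)$ with $x_n\in[A_{\xi_n}]$, and one arranges these to be small.

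Concretely, the contradiction I expect is this: pick a single infinite $A^*\in\A$ and inside it build an infinite almost disjoint subfamily $\{A_n':n\in\N\}$ of subsets of $A^*$; the element $f=\sum_n a_n 1_{\{\xi\}}$-type constructions from Lemma~\ref{norm-outside}'s proof give, after applying Rosenthal's lemma and the $\ell_\infty$-coincidence theorem (Theorem 2.5.4 of \cite{A&K}), an $f\in\ell_\infty^c(\kappa)$ with $S(f)$ having norm $\geq r-2\varepsilon=1-2\varepsilon$ at an accumulation point $x\notin\bigcup_\xi[A_\xi]$. But then consider $f'=1_{A^*}\in\ell_\infty^c$ where $A^*$ is an infinite subset of $\kappa$ of indices: by maximality and density of $\{[A_\xi]\}$, the function $S(1_{A^*})$ restricted to the clopen set $[\bigcup_{\xi\in A^*}A_\xi]$ equals $1$ (by the argument of Proposition~\ref{iso-separation}), and $S(1_{A^*})$ must vanish on $[A_\eta]$ for $\eta\notin A^*$; since these cover a dense set, $S(1_{A^*})=1_{[\bigcup_{\xi\in A^*}A_\xi]}$ exactly, in particular $\|S(1_{A^*})\|\le 1$ with the "off" part being $0$. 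Comparing the two, and using that $f$ in the first construction can be taken of the form $1_{A^*}$ for suitable $A^*$, forces $1-2\varepsilon\le 0$, a contradiction for small $\varepsilon$.

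The main obstacle I anticipate is the bookkeeping around maximality: showing rigorously that $\{[A_\xi]:\xi<\kappa\}$ is dense in $\N^*$ (equivalently that $S$ being an extension of $T$ pins down all "relevant" values), and that the accumulation point $x$ produced by Lemma~\ref{norm-outside} genuinely lies in a region where $S(f)$ is forced to vanish. The subtlety is that $f$ in that lemma need not be of the clean form $1_{A^*}$ for a subset $A^*$ of indices — it is a coordinatewise infinite sum of disjointly supported unit vectors $f_n$, which in general is \emph{not} of the form $1_X$ — so one has to either choose the $f_n$ to be single basis vectors $1_{\{\xi_n\}}$ (possible precisely because $r=1$ is witnessed by basis vectors, thanks to $T$ being an isometric embedding) so that $f=1_{\{\xi_n:n\in M\}}$ really is a characteristic function, and then invoke the Proposition~\ref{iso-separation}-style rigidity for $S$ restricted to $T[c_0(\kappa)]$'s "$\ell_\infty$-closure" to conclude $S(f)$ vanishes off $\bigcup[A_{\xi_n}]$, contradicting $|S(f)(x)|\ge 1-2\varepsilon$ at $x\notin\bigcup_\xi[A_\xi]$. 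Getting that rigidity to hold for the \emph{operator} $S$ (not just an isometry) is the crux, and it is exactly where maximality of $\A$ — forcing density and hence uniqueness of the vanishing extension — does the work.
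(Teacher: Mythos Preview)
Your approach has a genuine gap. You correctly observe that $r\geq 1$ (witnessed by $f=1_{\{\xi\}}$ with $\xi\notin C$), hence $R\geq 1$ by Lemma~\ref{norm-outside}. But the contradiction you then seek requires showing that $S(1_{A^*})$ vanishes on $[A_\eta]$ for every $\eta\notin A^*$, and this claim is unfounded. Since $1_{A^*}\notin c_0(\kappa)$ for infinite $A^*$, the value $S(1_{A^*})$ is in no way determined by $T$; and since $S$ is merely a bounded linear operator (not an isometry), the norm estimates driving Proposition~\ref{iso-separation} are unavailable. Density of $\bigcup_\xi[A_\xi]$ in $\N^*$ tells you only that a continuous function is determined by its restriction there --- it gives you no control over what that restriction actually is for $S(1_{A^*})$. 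There is nothing preventing $S(1_{A^*})$ from taking the value $5$ on some $[A_\eta]$ with $\eta\notin A^*$.

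The paper's proof runs in the opposite direction: it does not try to bound $R$ from above, but instead shows $r\geq R+1/2$. Starting from a witness $f\in\ell_\infty^c(\kappa\setminus C)$ of norm one and $x\notin\bigcup_\xi[A_\xi]$ with $|S(f)(x)|>R-1/2$, continuity of $S(f)$ propagates this inequality to a clopen neighbourhood $[B]\ni x$. Since $x\notin\bigcup_\xi[A_\xi]$, $B$ is not almost covered by finitely many $A_\xi$'s, so Lemma~\ref{mad} (this is where maximality enters) yields $\xi\in\kappa\setminus C$ outside $\supp(f)$ with $A_\xi\cap B$ infinite. Then $\|f\pm 1_{\{\xi\}}\|=1$, while for $y\in[A_\xi]\cap[B]$ and the appropriate sign one has $|S(f\pm 1_{\{\xi\}})(y)|=|S(f)(y)\pm 1|>R+1/2$. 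As $C$ was arbitrary this gives $r\geq R+1/2$, contradicting $R\geq r$. The idea you are missing is this \emph{perturbation by a fresh basis vector}, which converts a large value of $S(f)$ near a point off $\bigcup_\xi[A_\xi]$ into an even larger value of $S(f\pm 1_{\{\xi\}})$ on some $[A_\xi]$ --- exploiting only that $S$ extends $T$ on single basis vectors, not any rigidity on infinite sums.
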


\begin{proof} 
Let $T:c_0(\kappa)\rightarrow C(\N^*)$ be such that $T(1_{\{\xi\}})=1_{[A_\xi]}$
for each $\xi<\kappa$. Suppose $S:\ell_\infty^c(\kappa)\rightarrow C(\N^*)$
is an operator which extends $T$. Let $R$ and $r$ be as in Lemma \ref{norm-outside}.
Fix $C \in [\kappa]^{\leq \omega}$.
Using the definition of $R$ find   $f\in \ell_\infty^c(\kappa \setminus C)$  with $\|f\|=1$ such that
$$|S(f)(x)|>R-1/2$$
for some  $x\not\in \bigcup_{\xi <\kappa}[A_\xi]$. By the continuity of $S(f)$ there is an infinite $B\subseteq \N$ such that
$x\in [B]$ and $|S(f)(y)|>R-1/2$ for every $y\in [B]$. As $x\not\in \bigcup_{\xi<\kappa}[A_\xi]$, we conclude
that $B$ is not almost covered by finitely many elements of $\mathcal{A}$, and so by Lemma \ref{mad} the set $B$
has infinite intersections with uncountably many elements
of $\mathcal{A}$.  Let $\xi \in \kappa \setminus C$ 
be such that $\xi$ is not in the support of $f$ and $A_\xi \cap B \ne^* \emptyset$. Then $||f\pm1_{\xi}\|=1$ but 
$$|S(f\pm1_{\xi})(y)|=|S(f)(y)\pm1_{[A_\xi]}(y)|=|S(f)(y)|+1>R-1/2+1>R+1/2$$
for every $y\in [A_\xi] \cap [B]$.  Since $C$ was arbitrary, this shows that $r\geq R+1/2$, which contradicts Lemma
\ref{norm-outside}.
\end{proof}

\section{Extending  isomorphisms  of isomorphic copies of $c_0(\kappa)$  to automorphisms  under 
$\mathsf{MA}_{\hbox{$\sigma$-{\small{\rm linked}}}}(\kappa)$}

In this section we consider an uncountable cardinal $\kappa < \cc$, assume
$\mathsf{MA}_{\hbox{$\sigma$-{\small{\rm linked}}}}(\kappa)$ and fix
two isomorphic copies $X_1, X_2$ of $c_0(\kappa)\subseteq \ell_\infty/c_0$
and an isomorphism $U: X_1\rightarrow X_2$. The aim of this section is to show
that under this hypothesis there is an automorphism $\tilde U$
of $\ell_\infty/c_0$ which extends $U$.

In the first part of this section we introduce terminology and notation concerning
objects related to $X_1, X_2$ and $U$, which will be used in the entire section.
Recall the terminology of  Definition \ref{def-pi} and Definition \ref{sim}. 
Fix some isomorphism $T: c_0(\kappa)\rightarrow X_1$. 
Then $S =U\circ T$ is an isomorphism $c_0(\kappa)\rightarrow X_2$.

In order to construct an extension of \(U\), we choose \(\mc{F} = \{f_{\xi} : \xi < \kappa\}\) 
and \(\mc{G} = \{g_{\xi} : \xi < \kappa\}\), two indexed subsets of \(\ell_{\infty}\), 
such that $\pi(f_\xi)=T(1_{\{\xi\}})$ and $\pi(g_\xi)=S(1_{\{\xi\}}) = U(\pi(f_{\xi}))$.
We will additionally assume that all the coordinates of all $f_\xi$s and $g_\xi$s
are rational numbers and \(\norm{f_{\xi}} = \norm{\pi(f_{\xi})}, \, \norm{g_{\xi}} = \norm{\pi(g_{\xi})}\). 
This can be easily achieved, as a perturbation of any $f\in \ell_\infty$ 
by an element of $c_0$ does
not affect the value of $\pi(f)$.  Observe 
that for any finite \(a \subseteq \kappa\) and \(\rho \geq \norm{T}, \norm{T^{-1}}, \norm{S}, 
\norm{S^{-1}}\) operators \(T, S\) witness that 
\[
\ang{\pi(f_\xi) : \xi \in a} \simi{\rho, \rho} \ang{1_{\{\xi\}} : \xi \in a} \simi{\rho, \rho} \ang{\pi(g_\xi) : \xi \in a},
\]
where \(1_{\{\xi\}}\)s are considered as elements of \(c_0(\kappa)\).

We will construct an isomorphism ${\bar U}: \ell_\infty\rightarrow \ell_\infty$ such that
${\bar U}[c_0] = c_0$ and ${\bar U}(f_\xi)-g_\xi\in c_0$ for each $\xi<\kappa$.
Then we can define
$\tilde U$ as 
$$\tilde U(\pi(f))=\pi( {\bar U}(f)).$$
The condition \(U[c_0] = c_0\) ensures that \(\tilde U\) is well-defined and injective. 
The surjectivity of \(\bar U\) implies the surjectivity of \(\tilde U\). The boundedness of \(\bar U\) provides the boundedness of \(\tilde U\).
Since \(\pi[\mc{F}]\) is linearly dense in \(X_1\) and \(\tilde U\) 
coincides with \(U\) on \(\pi[\mc{F}]\), \(\tilde U\) is indeed the desired extension of \(U\). The above terminology and notation will be used in the  rest of this section.

\begin{definition}\label{def-P} Suppose that $\kappa$ is an 
uncountable cardinal, \(X_1, X_2 \subseteq \ell_{\infty}/c_0\) two isomorphic copies of \(c_0(\kappa)\),
$T, S: c_0(\kappa)\rightarrow\ell_\infty/c_0$ and $ \mc{F}, \mc{G} \subseteq \ell_{\infty}/c_0$
as above. Let  \(\rho = \max\{\norm{T}, \norm{T^{-1}}, 
\norm{S}, \norm{S^{-1}}\}\). 
 
We define  $\PP =\PP_{ \mc{F}, \mc{G} }= \langle \PP, \leq_{\PP}\rangle$ as follows. 
The set $\PP$ consists of triples of the form $p = (n_p, M_p, a_p)$ where:
\begin{enumerate}
\item[(a)] $n_p \in \N$,
\item[(b)] $M_p$ is a $n_p \times n_p$ matrix with rational entries such that 
 $$\|v\|/c_2(8\rho^2) \leq \|M_{p}v\| \leq c_2(8\rho^2)\|v\|$$ 
for any vector $v \in \ell_\infty^{n_p}$ ($c_2$ as in Lemma \ref{bourgain2}),
\item[(c)] $a_p\in[\kappa]^{<\omega}$ such that for \(n \geq n_p\)

$$
\langle f_{\xi}|{{[n, \infty)}} : \xi \in a_p\rangle \simi{1,2} \langle\pi(f_{\xi}) :\xi \in a_p\rangle \text{ and }
 \langle \pi(g_{\xi}) :\xi \in a_p\rangle \simi{2,1} \langle g_{\xi}|{[n, \infty)} :\xi \in a_p\rangle.$$
\end{enumerate}

For two elements $p, q \in P$ we write $p \leq_{\PP} q$ if and only if:
\begin{enumerate}
\item[(i)] $n_p \geq n_q$,
\item[(ii)] the matrix $M_p$ is of the form
$$\begin{bmatrix}
M_q& 0 \\
0& M_{qp}
\end{bmatrix}$$
where $M_{qp}$ is a $ [n_q, n_p) \times [n_q, n_p)$ matrix,
\item[(iii)] $a_p \supseteq a_q$,
\item[(iv)] {} $M_{qp} (f_{\xi}| [n_q, n_p)) = g_{\xi}|[n_q, n_p)$ for every $\xi \in a_q$. {}
\end{enumerate}
\end{definition}

\begin{lemma}
$\PP_{\mc{F}, \mc{G}}= \langle P, \leq_{\PP}\rangle$ as above is a partial order.
\end{lemma}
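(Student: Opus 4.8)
The plan is to verify the two defining properties of a partial order: reflexivity and transitivity, with transitivity being essentially routine once one checks that the block structures compose correctly.

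\medskip

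\emph{Reflexivity.} First I would check that $p \leq_{\PP} p$ for every $p \in \PP$. Conditions (i) and (iii) hold trivially since $n_p \geq n_p$ and $a_p \supseteq a_p$. For condition (ii), the matrix $M_p$ is vacuously of the required block form with $M_q = M_p$ and $M_{qp}$ the empty $[n_p, n_p) \times [n_p, n_p)$ matrix (the index set $[n_p,n_p)$ being empty). Condition (iv) is then also vacuous, as there is nothing to verify for an empty index range. So $\leq_{\PP}$ is reflexive.

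\medskip

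\emph{Transitivity.} The substance of the lemma is showing that $p \leq_{\PP} q$ and $q \leq_{\PP} r$ imply $p \leq_{\PP} r$. Conditions (i) and (iii) are immediate from transitivity of $\geq$ on $\N$ and of $\supseteq$ on finite subsets of $\kappa$. For (ii), from $q \leq_{\PP} r$ we know $M_q$ has the block form $\begin{bmatrix} M_r & 0 \\ 0 & M_{rq} \end{bmatrix}$ with $M_{rq}$ indexed by $[n_r, n_q)$, and from $p \leq_{\PP} q$ we know $M_p = \begin{bmatrix} M_q & 0 \\ 0 & M_{qp} \end{bmatrix}$ with $M_{qp}$ indexed by $[n_q, n_p)$. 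Substituting the first into the second, $M_p$ decomposes along the partition $[0, n_r) \cup [n_r, n_q) \cup [n_q, n_p)$ into the block-diagonal form with blocks $M_r$, $M_{rq}$, $M_{qp}$; regrouping the last two gives $M_p = \begin{bmatrix} M_r & 0 \\ 0 & M_{rp} \end{bmatrix}$ where $M_{rp}$ is the $[n_r, n_p) \times [n_r, n_p)$ matrix that is block-diagonal with blocks $M_{rq}$ and $M_{qp}$. For (iv), fix $\xi \in a_r$. Since $a_r \subseteq a_q$, the relation $q \leq_{\PP} r$ gives $M_{rq}(f_\xi|[n_r, n_q)) = g_\xi|[n_r, n_q)$, and $p \leq_{\PP} q$ gives $M_{qp}(f_\xi|[n_q, n_p)) = g_\xi|[n_q, n_p)$. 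Because $M_{rp}$ acts as $M_{rq}$ on the coordinates $[n_r, n_q)$ and as $M_{qp}$ on the coordinates $[n_q, n_p)$, we get $M_{rp}(f_\xi|[n_r, n_p)) = g_\xi|[n_r, n_p)$, which is condition (iv) for $p \leq_{\PP} r$.

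\medskip

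I do not expect any genuine obstacle here: the only point requiring a moment's care is the bookkeeping of index sets in the block decomposition (making sure the intervals $[0,n_r)$, $[n_r, n_q)$, $[n_q, n_p)$ partition $[0, n_p)$ correctly and that $M_{rp}$ is well-defined on $[n_r, n_p)$), and observing that conditions (b) and (c) in the definition of $\PP$ play no role in the order relation, so nothing needs to be checked about norms or about the sets $a_p$ beyond the containments already handled. Thus $\PP_{\mc{F},\mc{G}}$ is a partial order.
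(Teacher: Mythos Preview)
Your proof is correct. The paper states this lemma without proof, treating the verification as routine; your check of reflexivity and transitivity (with the careful bookkeeping of the block decomposition for transitivity) is exactly what is needed, and in the forcing convention used here (as in Kunen's text, which the paper cites) antisymmetry is not required.
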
  

\begin{lemma}\label{amalgamation}
Suppose that $\kappa$ is an uncountable cardinal,  \(X_1, X_2 \subseteq \ell_{\infty}/c_0\)
two isomorphic copies of \(c_0(\kappa)\),
$T, S: c_0(\kappa)\rightarrow\ell_\infty/c_0$, \(\rho \in \R\) and $\mc{F}, \mc{G} 
\subseteq \ell_{\infty}/c_0$ as above. 
Let $\PP=\PP_{\mc{F}, \mc{G}}$.
Let $p = (n, H, a_p)\in \PP$ and $q = (n, H, a_q)\in\PP$, and let $N\in \N$. 
Then there exists $r = (n_r, M_r, a_r) \in \PP$ such that $r \leq p, q$ and $n_r \geq N$. 
\end{lemma}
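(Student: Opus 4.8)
The plan is to take $a_r=a_p\cup a_q$ and to let $M_r$ be the block-diagonal matrix whose two blocks are $H$ (over $[0,n)$) and an $[n,n_r)\times[n,n_r)$ matrix $M_{n,n_r}$ still to be produced, with $n_r$ large. This shape is essentially forced by Definition~\ref{def-P}(i)--(iv), so the whole task reduces to constructing $M_{n,n_r}$, and I would construct it with the lemma of Bourgain and Tzafriri (Lemma~\ref{bourgain2}).

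\textbf{Choice of $n_r$ and of $M_{n,n_r}$.} First I would fix $n_r$ so that several asymptotic facts hold simultaneously. Since $a_r$ is a fixed finite set, Lemma~\ref{lifting} lets me demand $\|\Pi_{\mc F,a_r,n_r}\|,\|\Pi_{\mc G,a_r,n_r}\|\le 2$, which is Definition~\ref{def-P}(c) for $r$; Lemma~\ref{bounded-restriction} applied to the finite-dimensional spaces $Y^{\mc F,a_r}_{[n,\infty)}$ and $Y^{\mc G,a_r}_{[n,\infty)}$ lets me demand that ${\mathsf P}_{[0,n_r)}$ restricts to a $(1+\varepsilon)$-isomorphism on each of them, so that $Y_1:=Y^{\mc F,a_r}_{[n,n_r)}$ and $Y_2:=Y^{\mc G,a_r}_{[n,n_r)}$ have dimension $h:=|a_r|$, the systems $\{f_\xi|[n,n_r):\xi\in a_r\}$ and $\{g_\xi|[n,n_r):\xi\in a_r\}$ are linearly independent, and $\|R_{\mc F,a_r,n,n_r}^{-1}\|,\|R_{\mc G,a_r,n,n_r}^{-1}\|\le 1+\varepsilon$; and one last enlargement gives $h\le c_1(\rho)\sqrt{n_r-n}$ and $n_r\ge N$. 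By linear independence there is a well-defined isomorphism $\tau\colon Y_1\to Y_2$ with $\tau(f_\xi|[n,n_r))=g_\xi|[n,n_r)$, namely $\tau=R_{\mc G,a_r,n,n_r}\circ(U|X^{T,a_r})\circ R_{\mc F,a_r,n,n_r}^{-1}$, since $U(T(1_{\{\xi\}}))=S(1_{\{\xi\}})$. Feeding $Y_1$, $Y_2$, $\tau$ and the rational generating vectors $f_\xi|[n,n_r)$, $g_\xi|[n,n_r)$ into the ``moreover'' part of Lemma~\ref{bourgain2} with the $\rho$ of Definition~\ref{def-P} then yields a rational automorphism $M_{n,n_r}$ of $\ell_\infty^{[n,n_r)}$ extending $\tau$, with $\|M_{n,n_r}\|,\|M_{n,n_r}^{-1}\|<c_2(\rho)$.

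\textbf{Why $r$ works.} Granting this, $r=(n_r,M_r,a_r)$ is as required: (a) and (c) of Definition~\ref{def-P} hold by the choice of $n_r$; (b) holds because $M_r$ is block-diagonal with blocks $H$ (bounded by $c_2(\rho)$ since $p\in\PP$) and $M_{n,n_r}$; and $r\le_\PP p$, $r\le_\PP q$ hold because $n_r\ge n$, $M_r$ has the block form of (ii), $a_r\supseteq a_p,a_q$, and $M_{n,n_r}(f_\xi|[n,n_r))=g_\xi|[n,n_r)$ holds for every $\xi\in a_r$, in particular for every $\xi\in a_p$ and every $\xi\in a_q$; finally $n_r\ge N$.

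\textbf{The main obstacle.} The step demanding the most care is the verification of the hypotheses of Lemma~\ref{bourgain2}; hypothesis (1), the dimension bound, is already arranged. For the Banach--Mazur distance and operator-norm hypotheses I would route the estimates through $X^{T,a_r}$ and $X^{S,a_r}$: since $X^{T,a_r}=T[c_0(a_r)]$ and $c_0(a_r)\equiv\ell_\infty^h$ for finite $a_r$, one has $d(X^{T,a_r},\ell_\infty^h)\le\|T\|\,\|T^{-1}\|$, likewise for $S$, and $d(Y_1,X^{T,a_r})\le\|R_{\mc F,a_r,n,n_r}\|\,\|R_{\mc F,a_r,n,n_r}^{-1}\|\le(1+\varepsilon)\|\Pi_{\mc F,a_r,n}\|$. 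The place where both $p$ and $q$ enter is the bound on $\|\Pi_{\mc F,a_r,n}\|$, which does not follow from Definition~\ref{def-P}(c) for $p$ and for $q$ separately: writing a unit $z\in X^{T,a_r}$ as $z=z_p+z'$ with $z_p\in X^{T,a_p}$ and $z'\in X^{T,a_q\setminus a_p}$, using the monotonicity $\|\Pi_{\mc F,b,n}\|\le\|\Pi_{\mc F,a_q,n}\|\le 2$ valid for every $b\subseteq a_q$, and bounding $\|z_p\|,\|z'\|\le\|T\|\,\|T^{-1}\|$ (their $T^{-1}$-images are disjointly supported in $c_0(a_r)$), one gets $\|\Pi_{\mc F,a_r,n}\|\le 4\|T\|\,\|T^{-1}\|$, and similarly $\|\Pi_{\mc G,a_r,n}\|\le 4\|S\|\,\|S^{-1}\|$. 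Plugging these, together with $\|U\|\le\|S\|\,\|T^{-1}\|$ and $\|U^{-1}\|\le\|T\|\,\|S^{-1}\|$, into the expressions for $d(Y_i,\ell_\infty^h)$, $\|\tau\|$ and $\|\tau^{-1}\|$ bounds each of them by a quantity depending only on $\|T^{\pm1}\|$ and $\|S^{\pm1}\|$ (not on $h$ or $n_r$); the technical heart of the proof is checking that this quantity is $<\rho$ — i.e.\ that the parameter $\rho$ of Definition~\ref{def-P} is large enough relative to the data and the constant $2$ in (c) — after which Lemma~\ref{bourgain2} applies and the construction goes through.
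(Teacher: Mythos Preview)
Your overall architecture is exactly the paper's: set $a_r=a_p\cup a_q$, pick $n_r$ large, build the new block with Lemma~\ref{bourgain2}. The only substantive difference, and the place where your argument does not quite close, is the ``main obstacle'' paragraph.

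You split a unit $z\in X^{T,a_r}$ as $z_p+z'$ with $z_p\in X^{T,a_p}$, $z'\in X^{T,a_q\setminus a_p}$ and use $\|z_p\|,\|z'\|\le\|T\|\,\|T^{-1}\|$. This costs an extra factor $\|T\|\,\|T^{-1}\|$ and leads to $\|\Pi_{\mc F,a_r,n}\|\le 4\|T\|\,\|T^{-1}\|$, hence
\[
\|\tau\|\le \|\Pi_{\mc G,a_r,n}\|\cdot\|U\|\cdot\|R_{\mc F}^{-1}\|\le 4(1+\varepsilon)\,\|S\|^2\|S^{-1}\|\,\|T^{-1}\|,\qquad
d(Y_1,\ell_\infty^h)\le 4(1+\varepsilon)\,\|T\|^2\|T^{-1}\|^2.
\]
These are degree-four expressions in $\|T^{\pm1}\|,\|S^{\pm1}\|$, i.e.\ of order $\rho^2$ once you substitute $\|T^{\pm1}\|,\|S^{\pm1}\|<\sqrt{\rho/8}$. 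You need them $<\rho$, and ``taking $\rho$ large enough'' goes the wrong way: $\rho^2/16$ grows faster than $\rho$. For the $\rho$ of Definition~\ref{def-P} (which only demands $\|T^{\pm1}\|,\|S^{\pm1}\|<\sqrt{\rho/8}$) your bounds need not be $<\rho$, so Lemma~\ref{bourgain2} does not apply as stated.

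The paper avoids this by doing the splitting \emph{before} $T$, on $\ell_\infty^{a_r}$, where the coordinate projections have norm~$1$: for $x\in\ell_\infty^{a_r}$,
\[
\Pi_{\mc F,a_r,n}\circ T(x)=\Pi_{\mc F,a_p,n}T(x|a_p)+\Pi_{\mc F,a_q\setminus a_p,n}T(x|(a_q\setminus a_p)),
\]
so $\|\Pi_{\mc F,a_r,n}\circ T\|\le 2\|T\|+2\|T\|=4\|T\|$. In other words, one bounds the composite $\Pi\circ T$ (and $\Pi\circ S$) rather than $\|\Pi\|$ alone. This gives $\|\tau\|\le 4\|S\|\cdot\|T^{-1}\|\cdot\|R_{\mc F}^{-1}\|\le 8\cdot(\rho/8)=\rho$ (and similarly $d(Y_i,\ell_\infty^h)<\rho$), which is exactly why the constant $\sqrt{\rho/8}$ appears in Definition~\ref{def-P}. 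Replace your splitting by this one and your proof goes through verbatim; alternatively, if you insist on your estimate, you would have to strengthen the choice of $\rho$ in Definition~\ref{def-P}, which is outside the scope of the present lemma.
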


\begin{proof}
Let $a_r=a_p\cup a_q$.  
By Lemmas \ref{lifting} and \ref{bounded-restriction} there is $n_r\in \N$ with $n, N<n_r$,
$|a_r|\leq c_1(8\rho^2)\sqrt{n_r-n}$
(the constant $c_1$ is as in Lemma \ref{bourgain2})
such that 
$$
\langle f_{\xi}|{[k, \infty)} : \xi \in a_r\rangle \simi{1,2} \langle\pi(f_{\xi}) 
:\xi \in a_r\rangle, \, \, \langle \pi(g_{\xi}) :\xi \in a_r\rangle \simi{2, 1} 
\langle g_{\xi}|{[k, \infty)} :\xi \in a_r\rangle \leqno {}(1){}
$$
for \(k \geq n_r\), and
\begin{align}\tag{2}\begin{split}
\langle f_\xi|[n, n_r) : \xi \in a_r \rangle & \overset{2, 1}{\sim} \langle f_\xi|[n, \infty) : \xi \in a_r \rangle, \\
\langle g_\xi|[n, \infty) : \xi \in a_r \rangle & \overset{1, 2}{\sim} \langle g_\xi|[n, n_r) : \xi \in a_r \rangle.
\end{split}\end{align} 

To complete the definition of $r$ we need to find $M_r$.  By Lemma \ref{sim-prop}, 
the condition (c) for \(p, q\) and the lower and upper bounds of \(S, T\) imply that for \(a \subseteq a_p\) or \(a \subseteq a_q\)
\[
 \langle f_{\xi}|{[n, \infty)} : \xi \in a\rangle \simi{\rho, 2\rho} \langle1_{\{\xi\}} 
 :\xi \in a\rangle \simi{2\rho, \rho} \langle g_{\xi}|{[n, \infty)} :\xi \in a\rangle,
\]
where \(1_{\{\xi\}}\)s are elements of \(c_0(\kappa)\). As a result, for any real-valued sequence \((t_\xi)_{\xi\in a_r}\)
\[
\Big\|\sum_{\xi \in a_r} t_{\xi} f_\xi|[n, \infty) \Big\| \leq \Big\|\, \sum_{\xi \in a_p} t_{\xi} f_\xi|[n, \infty)
 \, \Big\| + \Big\|\sum_{{\xi \in a_q \setminus a_p}} t_{\xi} f_\xi|[n, \infty)\, \Big\|\leq\]
\[\leq 2\rho\Big\|\sum_{\xi\in a_p} t_{\xi}1_{{\xi}} \, \Big\| + 
2 \rho \Big\|\sum_{\xi\in a_q \setminus a_p} t_{\xi}1_{{\xi}} \, 
\Big\| \leq 4 \rho \max_{\xi \in a_r} |t_{\xi}| = 4 \rho \Big\|\sum_{\xi\in a_r} t_{\xi}1_{{\xi}} \, \Big\|,
\]
and, on the other hand,
\[
\Big\|\sum_{\xi\in a_r} t_{\xi}1_{{\xi}} \, \Big\| \leq \rho \Big\|\sum_{\xi \in a_r} t_{\xi}
 \pi(f_\xi) \Big\| \leq \rho\Big\|\sum_{\xi \in a_r} t_{\xi} f_\xi|[n, \infty) \Big\|.
\]
We may also write analoguous inequalities for \(g_{\xi}\)s instead of \(f_{\xi}\)s. Consequently,
\begin{equation}\tag{3}
\langle f_\xi|[n, \infty) : \xi \in a_r \rangle \simi{\rho, 4\rho} \langle 1_{\{\xi\}} : \xi \in a_r \rangle \simi{4\rho, \rho}
\langle g_\xi|[n, \infty) : \xi \in a_r \rangle.
\end{equation}
Combining (2) and (3) by Lemma \ref{sim-prop} we get
\[
\langle f_\xi|[n, n_r): \xi \in a_r\rangle \simi{2\rho, 4\rho} \langle 1_{\{\xi\}} : \xi \in a_r \rangle \simi{4\rho, 2\rho} \langle g_\xi|[n, n_r) : \xi \in a_r\rangle. \leqno{(4)}
\]
Let \(Y_{\F} = \mr{span}\{f_\xi|{[n, n_r)}: \xi \in a_r\}\), \(Y_{\G} =  \mr{span}\{g_\xi|{[n, n_r)}: \xi \in a_r\}\) and \(V: Y_{\F} \to Y_{\G}\) be given by \(V(f_\xi|{[n, n_r)}) = g_{\xi}|{[n, n_r)}\). Then \(\dim(Y_{\F}) = \dim(Y_\G) = |a_r|\) and (4) implies that \(d(Y_\F, \ell_\infty^{|a_r|}), d(Y_\G, \ell_\infty^{|a_r|}) \leq 8\rho^2\) and \(\norm{V}, \norm{V^{-1}} \leq 8\rho^2\). 

So, as $|a_r|\leq c_1(8\rho^2)\sqrt{n_r-n}$, we can apply Lemma \ref{bourgain2} to conclude that there is
an automorphism $W$ of $\ell_\infty^{n_r\setminus n}$ which extends $V$ such that
$$\|W\|, \|W^{-1}\|\leq c_2(8\rho^2)\leqno {}(5){}$$ 
and, since \(f_\xi\)s and \(g_\xi\)s have all coordinates rational, the matrix $M$ of $W$ has rational entries. We define:
\[
M_r = \begin{bmatrix}
    H & 0\\
    0 & M
\end{bmatrix}.
\]
Now we are left with checking that $r\in \PP$ and $r\leq p, q$. Condition
(a) of Definition \ref{def-P} is clear. Also $M_r$
is a \(n_p \times n_p\) matrix with rational entries by (b) for $p, q$ and the choice of $M$.
 Moreover, for any \(v \in  \ell_\infty^{n_r}\) 
\[
M_rv = H(v|[0, n))\cup M(v|[n, n_r))
\] 
so \(\norm{M_rv} = \max\{\norm{H(v|{[0, n))}}, \norm{M(v|{[n, n_r))}}\}\). 
Since by (b) for $p, q$ and by (5) we have 
\(\norm{H}, \norm{H^{-1}}, \norm{M}, \norm{M^{-1}} \leq c_2(8\rho^2)\), we get that
$$
\frac{\|v\|}{c_2(8\rho^2)} = \frac{\max \{\norm{v|{[0, n)}}, \norm{v|{[n, n_r)}}\}}{c_2(8\rho^2)}
\leq \max\{\norm{Hv|{[0, n)}}, \norm{Mv|{[n, n_r)}}\} \leq $$
$$ \leq c_2(8\rho^2) \cdot\max\{\norm{v|{[0, n)}}, \norm{v|{[n, n_r)}}\} = c_2(8\rho^2)\|v\|.
$$

So the condition (b) from Definition \ref{def-P} is satisfied for
$r = (n_r, M_r, a_r)$. Also (c) of  Definition \ref{def-P} is satisfied for $r$ by (1)
and so $r\in \PP_{\mc{F}, \mc{G}}$. To check that \(r \leq p, q\) first we note that the conditions (i) - (iii) 
of Definition \ref{def-P} follow directly from the choices of $n_r, M_r$ and $a_r$ respectively.
Finally condition (iv) follows from the definition of \(V\) and the fact that $W$ extends $V$.
\end{proof}

\begin{lemma}\label{ccc}
$\PP_{\mc{F}, \mc{G}}$ as in Definition \ref{def-P} is $\sigma$-linked
and so it satisfies the c.c.c.
\end{lemma}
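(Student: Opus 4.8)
The plan is to decompose $\PP_{\mc{F}, \mc{G}}$ into countably many pieces on which any two conditions are compatible, using the amalgamation result of Lemma \ref{amalgamation} together with the fact that the ``rigid data'' of a condition $p=(n_p, M_p, a_p)$ --- namely $n_p$ and $M_p$ --- ranges over a countable set. First I would observe that for each fixed $n\in\N$, the set of matrices $M\in M_n(\Q)$ satisfying condition (b) of Definition \ref{def-P} (i.e. $\|v\|/c_2(\rho)\leq\|Mv\|\leq c_2(\rho)\|v\|$ for all $v\in\ell_\infty^n$) is a subset of the countable set $M_n(\Q)$, hence is itself countable. Therefore the collection of all pairs $(n, M)$ that occur as the first two coordinates of some condition in $\PP$ is countable; enumerate it as $\{(n_k, M_k): k\in\N\}$ and set
$$\PP_k = \{p\in\PP : n_p = n_k,\ M_p = M_k\}.$$
Clearly $\PP = \bigcup_{k\in\N}\PP_k$, so it remains to check that each $\PP_k$ is linked.

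For the linkedness, fix $k$ and take $p=(n_k, M_k, a_p)$ and $q=(n_k, M_k, a_q)$ in $\PP_k$. These are exactly of the form required by the hypothesis of Lemma \ref{amalgamation} (same first coordinate $n$, same matrix $H=M_k$, arbitrary finite sets $a_p, a_q$), so Lemma \ref{amalgamation} (applied with, say, $N=0$) produces $r=(n_r, M_r, a_r)\in\PP$ with $r\leq_\PP p$ and $r\leq_\PP q$. Hence $p$ and $q$ are compatible in $\PP$, so $\PP_k$ is linked. This establishes that $\PP_{\mc{F}, \mc{G}} = \bigcup_{k\in\N}\PP_k$ is $\sigma$-linked. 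Since every $\sigma$-linked partial order is $\sigma$-centered-free of uncountable antichains --- more directly, a $\sigma$-linked poset has the c.c.c.\ because an uncountable antichain would meet some $\PP_k$ in two (in fact uncountably many) incompatible elements, contradicting linkedness --- we conclude $\PP_{\mc{F}, \mc{G}}$ satisfies the c.c.c.

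The only point requiring any care is verifying that the first two coordinates indeed range over a countable set: this hinges on condition (b) forcing $M_p$ to lie in $M_{n_p}(\Q)$, which is explicit in Definition \ref{def-P}, so there is no genuine obstacle here --- the amalgamation lemma has already done all the real work of showing that matching the rigid data suffices for compatibility. I would therefore expect the write-up to be short, with the substantive content being the citation of Lemma \ref{amalgamation} and the elementary observation about $\bigcup_n M_n(\Q)$ being countable.
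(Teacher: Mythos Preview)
Your proposal is correct and matches the paper's proof essentially verbatim: the paper also defines $\PP(n,H)=\{p\in\PP_{\mc F,\mc G}: n_p=n,\ M_p=H\}$ for $n\in\N$ and $H\in M_n(\Q)$, notes there are countably many such pairs, and invokes Lemma~\ref{amalgamation} to conclude each $\PP(n,H)$ is linked. Your additional remarks about why $\sigma$-linked implies c.c.c.\ and why the countability holds are accurate elaborations of what the paper leaves implicit.
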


\begin{proof}
For $n\in \N$ and for an $n\times n$ matrix $H$  with rational entries let
$$\PP(n, H)=\{p\in \PP_{\F, \mc{G}}: n=n_p, H=M_p\}.$$
It is clear that there are countably many $n, H$ as above
and that every element of $\PP_{\F, \mc{G}}$
belongs to $\PP(n, H)$ for some $n, H$ as above. Lemma \ref{amalgamation} implies
that each $\PP(n, H)$ is linked, i.e., every two elements of it are compatible.      
\end{proof}
      
\begin{lemma}\label{D-dense}
For every $n\in \N$ the set $$D_n=\{p\in \PP_{\F, \G}: n_p\geq n\}$$
is dense in $\PP_{\F, \G}$ {}  as in Definition \ref{def-P}.
\end{lemma}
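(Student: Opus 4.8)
The plan is to show that $D_n$ is dense by taking an arbitrary condition $p \in \PP_{\F, \G}$ and producing some $r \leq_\PP p$ with $n_r \geq n$. The natural move is to apply the amalgamation lemma (Lemma \ref{amalgamation}) with $p$ playing both roles $p$ and $q$: since $p = (n_p, M_p, a_p)$ is a single condition, the hypotheses of Lemma \ref{amalgamation} (two conditions sharing the same first coordinate and the same matrix) are trivially met by taking the pair $(p, p)$, and the lemma then yields $r = (n_r, M_r, a_r) \in \PP$ with $r \leq_\PP p, p$ and $n_r \geq n$. In particular $r \leq_\PP p$ and $r \in D_n$, which is exactly what density requires.

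\begin{proof}
Let $p = (n_p, M_p, a_p) \in \PP_{\F, \G}$ be arbitrary. Apply Lemma \ref{amalgamation}
with both conditions taken to be $p$ (so $n = n_p$, $H = M_p$, $a_p = a_q = a_p$) and with $N = n$.
We obtain $r = (n_r, M_r, a_r) \in \PP_{\F, \G}$ with $r \leq_\PP p$ and $n_r \geq n$.
Hence $r \in D_n$ and $r \leq_\PP p$, so $D_n$ is dense in $\PP_{\F, \G}$.
\end{proof}

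The only point that needs a moment's care is checking that the pair $(p, p)$ genuinely satisfies the hypotheses of Lemma \ref{amalgamation}, namely that both conditions have the same integer $n_p$ and the same matrix $M_p$ — which is immediate since they are literally the same condition — and that $N = n$ is a legitimate input (any natural number is). I do not expect any real obstacle here; the lemma has been set up precisely so that this density fact falls out, and indeed the hard combinatorial and Banach-space content (the invocations of Lemmas \ref{lifting}, \ref{bounded-restriction} and \ref{bourgain2}) has already been absorbed into the proof of Lemma \ref{amalgamation}. The statement of $D_n$'s density is really just recording the consequence that will later feed into the Martin's-axiom argument: the generic filter will meet every $D_n$, forcing the union of the matrices $M_p$ along the filter to be an honestly infinite block-diagonal matrix on $\ell_\infty$.
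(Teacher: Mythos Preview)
Your proof is correct and is exactly the paper's argument: apply Lemma~\ref{amalgamation} with $p=q$ and $N\geq n$.
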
  

\begin{proof}
Apply Lemma \ref{amalgamation} to $p=q$ and $N\geq n$.
\end{proof}    
   
\begin{lemma}\label{E-dense}
For every $\xi<\kappa$ the set
$$E_\xi=\{p\in \PP_{\F, \G}: \xi\in a_p\}$$
is dense in $\PP_{\F, \G}$  as in Definition \ref{def-P}.
\end{lemma}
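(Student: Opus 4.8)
The plan is to prove $E_\xi$ is dense by taking an arbitrary $p = (n_p, M_p, a_p) \in \PP_{\F,\G}$ and producing an extension $r \leq_{\PP} p$ with $\xi \in a_r$. If $\xi \in a_p$ already, there is nothing to do, so assume $\xi \notin a_p$. The natural move is to try to apply Lemma \ref{amalgamation} with a second condition whose finite set contains $\xi$; the only obstacle is that Lemma \ref{amalgamation} requires \emph{both} input conditions to share the same $n$ and the same matrix $H$, so I cannot simply feed it $p$ and some ad hoc condition built around $\{\xi\}$. Instead I would first manufacture a companion condition $q = (n_p, M_p, a_q)$ with the same first two coordinates as $p$ but with $a_q = a_p \cup \{\xi\}$, check that $q \in \PP_{\F,\G}$, and then invoke Lemma \ref{amalgamation} on the pair $p, q$.

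The one thing that must be verified for $q$ to be a legitimate condition is clause (c) of Definition \ref{def-P}: we need $\|\Pi_{\F, a_q, n_p}\|, \|\Pi_{\G, a_q, n_p}\| \le 2$. This need \emph{not} hold for the given $n_p$, since enlarging the finite set from $a_p$ to $a_q$ can blow up the norm of the lifting projection. Here is where the density of the $D_n$'s (Lemma \ref{D-dense}) and Lemma \ref{lifting} come in: first extend $p$ to some $p' \le_{\PP} p$ with $n_{p'}$ large enough that $\|\Pi_{\F, a_p \cup \{\xi\}, n_{p'}}\|, \|\Pi_{\G, a_p \cup \{\xi\}, n_{p'}}\| \le 2$ (Lemma \ref{lifting} guarantees such an $n_{p'}$ exists, since the norms of $\Pi_{\F, a, n}$ tend to $1$ as $n \to \infty$ for the fixed finite set $a = a_p \cup \{\xi\}$). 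Then set $q = (n_{p'}, M_{p'}, a_p \cup \{\xi\})$; clauses (a), (b) hold because $p' \in \PP$, and clause (c) holds by our choice of $n_{p'}$, so $q \in \PP_{\F,\G}$. Note also $q \le_{\PP} p$ trivially: (i)--(iii) are immediate from $n_{p'} \ge n_p$, the block form of $M_{p'}$, and $a_q \supseteq a_p$, and (iv) is inherited from $p' \le_{\PP} p$ since $a_p$ is unchanged.

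Now $p'$ and $q$ have the same $n$ (namely $n_{p'}$) and the same matrix (namely $M_{p'}$), so Lemma \ref{amalgamation} applies and yields $r = (n_r, M_r, a_r) \in \PP_{\F,\G}$ with $r \le_{\PP} p'$ and $r \le_{\PP} q$ (and $n_r$ as large as we like, though that is irrelevant here). Since $r \le_{\PP} q$ we have $a_r \supseteq a_q \ni \xi$, so $r \in E_\xi$; and since $r \le_{\PP} p' \le_{\PP} p$ and $\le_{\PP}$ is transitive (by the preceding unnamed lemma asserting $\PP_{\F,\G}$ is a partial order), $r \le_{\PP} p$. This shows every condition has an extension in $E_\xi$, i.e.\ $E_\xi$ is dense, as required.

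The main obstacle, such as it is, is purely bookkeeping: recognizing that Lemma \ref{amalgamation} cannot be applied directly to $p$ and a freshly-built condition because of the matching-$n$-and-$H$ hypothesis, and therefore routing through an intermediate extension $p'$ of $p$ whose length is large enough (via Lemma \ref{lifting}) to accommodate the enlarged finite set in clause (c). Everything else is immediate from the definitions and from transitivity of $\le_{\PP}$.
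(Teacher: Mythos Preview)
Your argument is correct, but the paper's proof is shorter by exploiting a feature of the setup you overlooked. Instead of enlarging $a_p$ to $a_p\cup\{\xi\}$ and then having to pass to a deeper $p'$ to rescue clause~(c), the paper simply takes $q=(n_p,M_p,\{\xi\})$ with the \emph{singleton} $\{\xi\}$ as the third coordinate. The point is that the lifts were chosen at the outset so that $\|f_\xi\|=\|\pi(f_\xi)\|$ and $\|g_\xi\|=\|\pi(g_\xi)\|$; this forces $\pi$ restricted to the one-dimensional space $Y^{\F,\{\xi\}}_{[n,\infty)}$ (and likewise for $\G$) to be an isometry for \emph{every} $n$, so $\|\Pi_{\F,\{\xi\},n_p}\|=\|\Pi_{\G,\{\xi\},n_p}\|=1\le 2$ automatically and clause~(c) holds at the original level $n_p$ with no extra work. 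One then applies Lemma~\ref{amalgamation} directly to $p$ and $q$, which already share $n_p$ and $M_p$. Your route via Lemma~\ref{lifting} and an intermediate $p'$ is perfectly valid and would survive even without the normalization $\|f_\xi\|=\|\pi(f_\xi)\|$, but in the present context it is an unnecessary detour.
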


\begin{proof} Fix $\xi\in \kappa$ and $p = (n_p, M_p, a_p)$. 
Since \(\norm{f_{\xi}} = \norm{\pi(f_{\xi})}\) and \(\norm{g_{\xi}} = \norm{\pi(g_\xi)}\) by the choice of 
$f_\xi$ and $g_\xi$ at the beginning of this section, the  triple \(q = (n_p, M_p, \{\xi\})\)
belongs to \(\PP_{\F, \G}\) as condition (c) of Definition \ref{def-P} is satisfied for $q$.
Apply Lemma \ref{D-dense} to find $r \leq p, q$. Then \(r \in E_{\xi}\).
\end{proof} 

\begin{proposition}\label{ma} Suppose that $\kappa$ is an infinite cardinal.
Assume $\mathsf{MA}_{\hbox{$\sigma$-{\small{\rm linked}}}}(\kappa)$. 
Let $X_1, X_2$ be two isomorphic copies of $c_0(\kappa)$ inside $\ell_\infty/c_0$ and let
$U: X_1\rightarrow X_2$ be an isomorphism. Then
there is an automorphism $\tilde U$ of $\ell_\infty/c_0$ which extends $U$. 
\end{proposition}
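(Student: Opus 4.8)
The plan is to apply $\mathsf{MA}_{\hbox{$\sigma$-{\small{\rm linked}}}}(\kappa)$ to the partial order $\PP=\PP_{\F,\G}$ of Definition~\ref{def-P} and to read off, from a sufficiently generic filter, the operator $\bar U$ promised in the reduction described at the beginning of this section. If $\kappa=\omega$ this is classical: $\ell_\infty/c_0$ is separably automorphic (\cite{book}) and any two of its copies of $c_0$ have codimension $\cc$; so assume $\kappa$ uncountable and fix $\rho$, $\F$, $\G$ and $\PP$ as in Definition~\ref{def-P}. By Lemma~\ref{ccc} the order $\PP$ is $\sigma$-linked, and by Lemmas~\ref{D-dense} and~\ref{E-dense} the family
$$\mathcal D=\{D_n:n\in\N\}\cup\{E_\xi:\xi<\kappa\}$$
consists of dense subsets of $\PP$ and has cardinality $\kappa$. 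So $\mathsf{MA}_{\hbox{$\sigma$-{\small{\rm linked}}}}(\kappa)$ yields a filter $\mathbb G\subseteq\PP$ meeting every member of $\mathcal D$.

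The next step is to build the $\N\times\N$ matrix $M=\bigcup_{p\in\mathbb G}M_p$ and to recognize it as the matrix of the desired $\bar U$. It is well defined on all of $\N\times\N$: $\mathbb G$ meets each $D_n$, so every coordinate pair is covered by some $M_p$; and any $p,q\in\mathbb G$ have a common lower bound $r\in\mathbb G$, which by condition~(ii) of Definition~\ref{def-P} has both $M_p$ and $M_q$ as principal corners of $M_r$, so they agree where both are defined. Letting $m_0<m_1<\cdots$ enumerate $\{0\}\cup\{n_p:p\in\mathbb G\}$, the same condition~(ii) applied along $\mathbb G$ shows $M$ vanishes off the diagonal blocks $[m_k,m_{k+1})$; hence $M$ is block diagonal, and each of its blocks coincides with a diagonal sub-block of $M_q$ for a suitable $q\in\mathbb G$ with $n_q=m_{k+1}$. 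Since $M_q$ satisfies the two-sided estimate of condition~(b), so does each of its blocks (restrict the estimate to vectors supported in one block). Therefore $M$ defines an invertible operator $\bar U$ on $\ell_\infty$ with $\|\bar U\|,\|\bar U^{-1}\|\le c_2(\rho)$ which, having finite diagonal blocks, carries $c_0$ onto $c_0$.

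Then one checks $\bar U(f_\xi)=^*g_\xi$ for every $\xi<\kappa$: take $p_0\in\mathbb G\cap E_\xi$, so $\xi\in a_{p_0}$, and given $j\ge n_{p_0}$ pick $r\in\mathbb G$ with $r\le p_0$ and $n_r>j$; condition~(iv) for $r\le p_0$ forces the block of $M_r$ sitting on $[n_{p_0},n_r)$ to map $f_\xi|[n_{p_0},n_r)$ to $g_\xi|[n_{p_0},n_r)$, and since $M$ agrees with $M_r$ on the diagonal block containing $j$ we get $\bar U(f_\xi)(j)=g_\xi(j)$. Thus $\bar U(f_\xi)-g_\xi$ is finitely supported. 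So $\bar U$ is exactly the operator required by the reduction at the start of the section --- an automorphism of $\ell_\infty$ preserving $c_0$ with $\bar U(f_\xi)=^*g_\xi$ for all $\xi$ --- and $\tilde U(\pi(f))=\pi(\bar U(f))$ is the sought automorphism of $\ell_\infty/c_0$, which extends $U$ because it agrees with $U$ on the linearly dense subset $\pi[\F]$ of $X_1$.

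I expect the genuine difficulty of the whole argument to have been absorbed already into Lemma~\ref{amalgamation}: extending a finite partial isomorphism between subspaces of $\ell_\infty^n$ to an automorphism of $\ell_\infty^n$ with norms bounded independently of $n$ is exactly the content of the Bourgain--Tzafriri Lemma~\ref{bourgain2}, and it is what makes $\PP$ amalgamate enough to be $\sigma$-linked and to make the $D_n$ dense. The residual subtlety above is only bookkeeping: one must use that $\mathbb G$ is \emph{directed} --- not merely that it contains a descending $\omega$-chain --- both to see that $M$ is single-valued and to see that the partition $\{[m_k,m_{k+1}):k\in\N\}$ is coarse enough that every block of $M$ lies inside a single $M_q$ and so inherits the uniform two-sided bound of condition~(b).
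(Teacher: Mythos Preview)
Your proof is correct and follows essentially the same approach as the paper's: apply $\mathsf{MA}_{\sigma\text{-linked}}(\kappa)$ to $\PP_{\F,\G}$ against the dense sets $D_n$ and $E_\xi$, assemble the block-diagonal matrix $M$ from the resulting filter, verify via condition~(b) that $M$ gives an automorphism of $\ell_\infty$ preserving $c_0$, and use condition~(iv) to check $\bar U(f_\xi)=^*g_\xi$. The only cosmetic differences are that you handle $\kappa=\omega$ separately and index the block partition by \emph{all} of $\{n_p:p\in\mathbb G\}$ rather than by a single decreasing $\omega$-chain as the paper does; both choices work, and your closing remark that the real content lives in Lemma~\ref{amalgamation} (via Bourgain--Tzafriri) is exactly right.
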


\begin{proof} As noted at the beginning of this section, under the hypothesis of the proposition
we have certain isomorphic embeddings $S, T$ of $c_0(\kappa)$ into $\ell_\infty/c_0$ and
$f_\xi, g_\xi\in \ell_\infty$ for $\xi<\kappa$ such that $\pi(f_\xi)=T(1_{\{\xi\}})$,
$\pi(g_\xi)=S(1_{\{\xi\}})$, and it is enough
to construct an automorphism ${\bar U}: \ell_\infty\rightarrow \ell_\infty$ such that
${\bar U}[c_0] = c_0$  and ${\bar U}(f_\xi)-g_\xi\in c_0$ for each $\xi<\kappa$.

To do so, we consider the partial order $\PP=\PP_{\F, \G}$. 
By Lemma \ref{ccc} and Lemmas \ref{D-dense} and \ref{E-dense} and the hypothesis of
$\mathsf{MA}_{\hbox{$\sigma$-{\small{\rm linked}}}}(\kappa)$ there is a filter $\mathbb G\subseteq \PP$
such that $D_n\cap\mathbb G\not=\emptyset\not=E_\xi\cap \mathbb G$ for each $n\in \N$ and each $\xi<\kappa$. 

Now define an $\N\times\N$ matrix $M$ by putting $M(i, j)=k$ if and only if
there is $p\in \mathbb G$ such that $M_p(i, j)=k$. Note that since $\mathbb G$ is a filter, given
$p, q\in \mathbb G$ such that $i, j<n_p, n_q$, there is $r\in \mathbb G$ such that $r\leq p, q$
and so $M_p(i, j)=M_r(i, j)=M_q(i, j)$. By Lemma \ref{D-dense}, for any pair 
\((i,j) \in \N\) there is \(p \in \mathbb{G}\) such that \(n_p > i, j\) and
 \(M_p\) is an \(n_p \times n_p\) matrix. Hence, $M$ is a well defined $\N\times \N$ matrix. 

Moreover, according to
Lemma \ref{D-dense}, there is a decreasing sequence $\{p_k: k\in \N\}\subseteq \mathbb G$
with $p_0$ being the trivial condition
such that $n_{p_k}\geq k$. It follows from condition (ii) of Definition \ref{def-P}
that there are $[n_{p_k}, n_{p_{k+1}})\times [n_{p_k}, n_{p_{k+1}})$ matrices $M_k$ such that $M$ 
is block diagonal  built from blocks 
$M_k$, that is $M(i, j)=0$ unless there is $k$ such that $i, j\in [n_{p_k}, n_{p_{k+1}})$ and
$M(i, j)=M_k(i, j)$. 
In particular $M 1_{\{i\}}$, which is one of the columns of
the matrix $M$, is a finitely supported vector for any \(i \in \N^*\). It follows that $M$ defines an operator 
${\bar U_{00}}: c_{00}\rightarrow c_{00}$  given by ${\bar U}_{00}(f)=Mf$
for $f\in c_{00}$. 

Condition (b) of Definition \ref{def-P} implies that $\|M_k\|\leq c_2(8\rho^2)$ for each $k$ and so
$\|{\bar U}_{00}\|\leq c_2(8\rho^2)$.  Consequently, \(\bar U_{00}\) may be extended to an 
operator ${\bar U}_{0}$ on \(c_0\) with the same norm.  Moreover, condition (b) of Definition \ref{def-P} implies also that 
for each $k\in \N$ there are $[n_{p_k}, n_{p_{k+1}})\times [n_{p_k}, n_{p_{k+1}})$ matrices $\tilde M_k$
which are inverses of $M_k$s. The same argument as for $M$ implies that
the matrix $\tilde M$ formed by the matrices $\tilde M_k$ defines the inverse 
of ${\bar U}_0$ of the norm not exceeding $c_2(8\rho^2)$.

Multiplications by matrices $M$ and $\tilde M$ define 
operators ${\bar U}$, ${\bar U}^{-1}$ on $\ell_\infty$ (Propositions 2.1 and 2.2  of \cite{cristobal})
whose composition (from either side) is the identity. In fact ${\bar U}={\bar U}_0^{**}$ and similarly for the inverse
(Proposition 2.5 of \cite{cristobal}).
It follows that ${\bar U}$ given by the matrix $M$ is an
automorphism of $\ell_\infty$ with \(\bar U [c_0] = c_0\).

Now fix $\xi<\kappa$ and applying Lemma \ref{E-dense} fix $p\in \mathbb G$ such that
$\xi\in a_p$. We claim that ${\bar U}(f_\xi) =g_\xi +h_{\xi}$,
where $h_{\xi} \in \ell_\infty(n_p)$. Indeed, since $\mathbb G$ is a filter, 
$M(i, j)=k$ if and only if there is $q\in \mathbb G$ such that $M_q(i, j)=k$ and $q\leq p$,
so the claim follows from conditions (ii) and (iv) of Definition \ref{def-P}.
\end{proof}

\section{Questions}
     
Besides Problem 12 of \cite{book} whether $\ell_\infty/c_0$ can be consistently universally $\kappa$-injective
for $\kappa>\omega_1$ we believe the following problems are open, natural and interesting:
\begin{questions}\label{questions1}
Is any of the following statements provable in {\sf ZFC} (or its negation is consistent with {\sf ZFC})
\begin{enumerate}
\item if $\ell_\infty/c_0$ is  $2^\omega$-automorphic, then $2^\omega=\omega_1$;
\item if $\ell_\infty/c_0$ is $2^\omega$-extensible, then $2^\omega=\omega_1$;
\item $\ell_\infty/c_0$ is not $c_0(2^\omega)$-automorphic;
\item $\ell_\infty/c_0$ is not $c_0(2^\omega)$-extensible;
\item $\ell_\infty/c_0$ is not $c_0(\mathfrak a)$-automorphic;
\item $\ell_\infty/c_0$ is not $c_0(\mathfrak a)$-extensible.
\end{enumerate}
\end{questions}
We note that $\ell_\infty/c_0$ is not automorphic.
This is because $\ell_\infty/c_0$ has  a decomposition as $\ell_\infty/c_0\oplus \ell_\infty$
and it is proved in \cite{auto-linros} that $\ell_\infty$ is not automorphic.
Taking into account Proposition \ref{noextension}, one way to solve in the negative items (5) and (6) (and so
(3) and (4)) may be through 
generalizing Proposition \ref{embedding} from $\omega_1$ to $\mathfrak a$ by constructing an isomorphic embedding
of $\ell_\infty^c(\mathfrak a)$ into $\ell_\infty/c_0$ in {\sf ZFC}.
We note here that this may be delicate because, for example, it is proved
in \cite{christina} that consistently $2^\omega$ is arbitrarily big and 
$\ell_\infty(c_0(\omega_2))$ cannot be isomorphically embedded into $\ell_\infty/c_0$.
So in such a situation $\ell_\infty^c(\omega_2)$ simply does
not embed into $\ell_\infty/c_0$ as $\ell_\infty(c_0(\omega_2))\subseteq \ell_\infty^c(\omega_2)$. 
However, in these (Cohen) models $\mathfrak a=\omega_1$. 

Another observation is that in the model of  Theorem \ref{main-inj} all the items besides (1) and (2) of the
above question hold by Proposition \ref{noextension}, as $\mathfrak a=2^\omega$
and $\ell_\infty^c(\mathfrak a)$ embeds into $\ell_\infty/c_0$ since the Boolean algebra 
of countable and cocountable subsets of $2^\omega$ has cardinality $2^\omega$. 
Also we do not know the answer to the following
\begin{question} Suppose that $\kappa<\mathfrak b$ is an uncountable cardinal.
Does every isometric embedding of $T: c_0(\kappa)\rightarrow\ell_\infty/c_0$
extend to an isometric embedding $T: \ell_\infty^c(\kappa)\rightarrow\ell_\infty/c_0$?
\end{question}

The positive answer would generalize Theorem \ref{b-equiv}. In fact some of the
questions above concerning the existence of isomorphic embeddings may be solved by
providing even stronger Boolean embeddings as we do not know the answers to the following: 

\begin{questions} Are the following provable in {\sf ZFC}:
\begin{enumerate}
\item The Boolean algebra of all countable and all cocountable subsets of $\mathfrak a$
embeds into $\wp(\N)/Fin$;
\item Every embedding of the Boolean algebra $Fincofin(\kappa)$ into $\wp(\N)/Fin$
can be extended to an embedding of the Boolean algebra of all countable and all cocountable subsets of 
$\kappa$ if $\kappa<\mathfrak b$.
\end{enumerate}
\end{questions}

\bibliographystyle{amsplain}

\end{document}